\def\cM{{\mathcal {M}}}
\def\p{\partial}
\def\R{{ \mathbb{R}}}
\def\N{{\mathbb{N}}}
\def\Z{{\mathbb{Z}}}
\def\C{{\mathbb{C}}}
\def\cM{{\cal M}}
\def\di{\diamond}
\def\di{\diamond}
\def\br{{\bf r}}
\newtheorem{theorem}{Theorem}[section]
\newtheorem{lemma}[theorem]{Lemma}
\newtheorem{proposition}[theorem]{Proposition}
\newtheorem{corollary}[theorem]{Corollary}
\theoremstyle{definition}
\newtheorem{definition}[theorem]{Definition}
\newtheorem{assumption}[theorem]{Assumption}
\newtheorem{remark}[theorem]{Remark}
\newtheorem*{remark*}{Remark}
\numberwithin{figure}{section} \numberwithin{equation}{section}
\title{Eigenvectors and eigenfunctionals of homogeneous order-preserving maps}
\author{Horst R. Thieme\thanks{({\tt hthieme@asu.edu})}
\\
 School of Mathematical and Statistical Sciences
 \\
Arizona State University, Tempe, AZ 85287-1804, USA
}
\begin{document}
\date{Feb 15, 2013}
\smallskip

\maketitle

\begin{abstract}
This paper considers  homogeneous order preserving
continuous maps on the normal cone of an
ordered normed vector space. It is shown that
certain operators of that kind which are not necessarily compact
themselves but have a compact power have
a positive eigenvector that is associated with
the cone spectral radius. We also derive conditions
for the existence of homogeneous order preserving
eigenfunctionals. Our results are illustrated in
a model for spatially distributed two-sex populations.
\end{abstract}

{\bf Keywords:} cone spectral radius, power compact, monotonically
compact, order bounded, normal cone, beer barrel, two-sex models.

\pagestyle{myheadings}\markboth{\sc  H.R. Thieme} {\sl
Eigenvectors of homogeneous maps}


\section{Introduction}

For a linear bounded operator map $B$ on a complex Banach space,
the spectral radius of $B$ is defined as
\begin{equation}
\br(B) = \sup \{|\lambda |; \lambda \in \sigma (B)\},
\end{equation}
where $\sigma (B)$ is the spectrum of $B$,
\begin{equation}
\sigma(B) = \C \setminus \rho (B),
\end{equation}
and $\rho(B)$ the resolvent set of $B$, i.e., the set of those
$\lambda \in \C$ for which $\lambda - B$ has a bounded everywhere
defined inverse. The following alternative formula holds,
\[
\br(B) = \inf_{n\in \N} \|B^n\|^{1/n} = \lim_{n\to \infty} \|B^n\|^{1/n},
\]
which is also meaningful in a real Banach space. If $B$ is a compact linear map
on a complex Banach space
and $\br(B) >0$, then there exists some $\lambda \in \sigma(B)$ and $v \in X$ such
that  $|\lambda| = \br (B)$ and $Bv = \lambda v\ne0$. Such an $\lambda $ is called
an eigenvalue of $B$. This raises the question whether $\br(B)$  could be an eigenvalue itself. There is a positive answer, if $B$ is a positive operator
and satisfies some generalized compactness assumption.

\subsection{Positivity}

For models in the biological, social, or economic sciences, there
is a natural  interest in solutions that are positive in an
appropriate sense.

 A closed subset $X_+$ of a normed real  vector space $X$ is called a
 {\em  wedge} if

\begin{itemize}

\item[(i)] $X_+$ is convex,

\item[(ii)] $\alpha x \in X_+$ whenever $x \in X_+$ and $\alpha \in \R_+$.

\end{itemize}

A  wedge is called a {\em cone} if

\begin{itemize}
\item[(iii)] $X_+ \cap (-X_+) = \{0\}$.
\end{itemize}

Nonzero points in a cone or wedge are called {\em positive}.

A wedge is called {\em solid} if it contains interior points.

A wedge is called {\em reproducing} (also called {\em generating}) if
\begin{equation}
\label{eq:reproducing}
X = X_+ - X_+ ,
\end{equation}
and {\em total} if $X$ is the closure of $X_+- X_+$.

A  cone $X_+$ is called {\em normal}, if
there exists some $\delta > 0$ such that
\begin{equation}
\label{eq:normal}
 \|x +z \| \ge \delta  \hbox{ whenever } x \in X_+, z \in X_+, \|x\|=1 = \|z\|.
\end{equation}
Equivalent conditions for a cone to be normal are given in Theorem
\ref{re:cone-normal}.
In function spaces, typical cones are  formed by the nonnegative functions.

A map $B$ on $X$ with $B(X_+) \subseteq X_+$ is called a {\em positive map}.

If $X_+$ is a cone in $X$, we introduce a partial order on $X$ by
$x \le y $ if $y-x \in X_+$ for $x,y \in X$.

\begin{definition}
\label{def:order-pres}
Let $X$ and $Z$ be ordered vector space with cones $X_+$ and $Z_+$ and  $ U \subseteq X$. A map $B: U \to Z$ is called
{\em order preserving}  (or monotone or increasing) if $B x \le By$ whenever $x,y \in U$ and $x \le y$.
\end{definition}

Positive linear maps are order-preserving.
They have the remarkable property that their spectral radius is a spectral
value \cite{Bon58} \cite[App.2.2]{Sch} if $X$ is a Banach space and
$X_+$ a normal reproducing cone.

The celebrated Krein-Rutman theorem \cite{KrRu},
which generalizes parts of the Perron-Frobenius theorem to
infinite dimensions,
establishes that a compact positive linear map
$B$ with $\br(B) > 0$ on an ordered Banach space $X$ with total  cone $X_+$ has an eigenvector
$v \in X_+$, $v \ne 0$, such that $Bv = \br(B)v$ and a
positive bounded linear eigenfunctional
$v^*: X \to \R$, $v \ne 0$, such that $x^* \circ B = \br(B) x^*$.

This theorem has been generalized into various directions by Bonsall \cite{Bon55} and Birkhoff \cite{Bir}, Nussbaum \cite{Nus81, Nus}, and Eveson and Nussbaum \cite{EvNu}
 (see these papers
for additional references).

Can the Krein-Rutman theorem be extended to certain nonlinear maps on cones?

\subsection{The space of bounded homogeneous maps}

In the following, $X$, $Y$ and $Z$ are ordered normed vector spaces
with cones $X_+$, $Y_+$ and $Z_+$ respectively,

\begin{definition}
 $B:X_+ \to Y$ is called {\em  (positively) homogeneous (of degree one)},
 if  $B(\alpha x) = \alpha B x$
for all $\alpha \in \R_+$, $x \in X_+$.
\end{definition}

Since we do not consider maps that are homogeneous in other ways,
we will simply call them homogeneous maps.
If follows from the definition that
\[
B0 = 0.
\]
Homogeneous maps are not Frechet differentiable at 0 unless $B(x+y) = Bx + By$
for all $x,y \in X_+$. For the following holds.

\begin{proposition}
Let $B: X_+ \to Y$ be homogeneous. Then the directional derivatives of $B$
exist at 0 in all directions of the cone and
\[
\p B (0,x) = \lim_{t \to 0+}\frac{ B(t x) - B(0)}{t} = B(x), \qquad x \in X_+.
\]
\end{proposition}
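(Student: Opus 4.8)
The statement to prove is essentially immediate from the definition of homogeneity, so the plan is very short.

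\medskip

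\textbf{Proof proposal.} The plan is to unwind both sides of the asserted identity and observe that they coincide termwise. First I would recall that a homogeneous map satisfies $B0 = 0$ (noted just above in the excerpt), so that for $x \in X_+$ and $t > 0$ one has
\[
\frac{B(tx) - B(0)}{t} = \frac{B(tx)}{t}.
\]
Next, since $t \in \R_+$ and $x \in X_+$, homogeneity gives $B(tx) = t\,Bx$, and therefore $\frac{1}{t}B(tx) = Bx$ for every $t > 0$. Thus the difference quotient is in fact constant in $t$ on $(0,\infty)$, equal to $Bx$, and its limit as $t \to 0+$ trivially exists and equals $Bx$. This is exactly the claimed formula $\p B(0,x) = B(x)$, and it holds for each $x \in X_+$, i.e.\ in every direction of the cone. (Note that $tx \in X_+$ for $t > 0$ by property (ii) of a cone, so $B(tx)$ is defined; the domain $X_+$ causes no issue since we only need one-sided limits $t \to 0+$.)

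\medskip

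There is essentially no obstacle here: the only subtlety worth a remark is that the limit must be taken as a one-sided limit $t \to 0+$, because $x \in X_+$ need not have $-x \in X_+$, so $B(tx)$ is only defined for $t \ge 0$; this is already built into the statement. No continuity of $B$ is needed, nor any structure on the cone beyond (ii). I would keep the proof to two or three lines.
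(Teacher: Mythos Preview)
Your proof is correct; the paper itself states this proposition without proof, treating it as immediate from the definition of homogeneity, which is exactly what you do. Your two-line argument is all that is needed.
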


There are good reasons to consider homogeneous maps. Here is a mathematical one.

\begin{theorem} Let $F: X_+ \to Y$ and $u \in X$. Assume that
the directional derivatives of $F $ at $u$ exist in all directions of
the cone. Then the map $B: X_+ \to X_+$, $B = \p F(u, \cdot)$,
\[
B (x) = \p F(u,x) = \lim_{t \to 0+} \frac{F(u+ t x) - F(u)}{t}, \qquad x \in X_+,
\]
is homogeneous.
\end{theorem}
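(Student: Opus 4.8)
The plan is to verify the defining identity $B(\alpha x) = \alpha B(x)$ directly from the limit formula, treating the cases $\alpha = 0$ and $\alpha > 0$ separately.

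First I would dispose of $\alpha = 0$. Since $F(u + t\cdot 0) - F(u) = 0$ for every $t > 0$, the difference quotient is identically zero, so $B(0) = \lim_{t\to 0+}\frac{F(u) - F(u)}{t} = 0 = 0\cdot B(x)$; in particular $B0 = 0$, which also matches the general fact noted above for homogeneous maps.

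Next, fix $\alpha > 0$ and $x \in X_+$. In the difference quotient defining $B(\alpha x)$ I would substitute $s = \alpha t$, observing that $s \to 0+$ exactly when $t \to 0+$, and that $u + tx$ lies in the domain of $F$ for all sufficiently small $t > 0$ because $\p F(u,x)$ is assumed to exist (so the expressions below are meaningful). This yields
\[
B(\alpha x) = \lim_{t\to 0+}\frac{F(u + t\alpha x) - F(u)}{t} = \lim_{s\to 0+}\alpha\,\frac{F(u + sx) - F(u)}{s} = \alpha \lim_{s\to 0+}\frac{F(u + sx) - F(u)}{s} = \alpha B(x),
\]
where the last limit equals $\p F(u,x) = B(x)$ by hypothesis, so the fixed scalar $\alpha$ may be pulled out of the limit (limits in a normed space respect scalar multiplication). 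Since $\alpha \ge 0$ and $x \in X_+$ were arbitrary, $B$ is homogeneous.

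There is essentially no obstacle here: the argument is a one-line change of variables in the limit. The only points that warrant a word of care are that $u + tx$ remains in the domain of $F$ for small $t > 0$ — which is exactly what is meant by the directional derivative of $F$ at $u$ in direction $x \in X_+$ existing — and that the rescaling of the limit is legitimate, which is immediate since $\alpha$ is a fixed nonzero real number.
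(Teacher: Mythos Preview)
Your proof is correct and follows exactly the same approach as the paper's: dispose of $\alpha=0$ trivially, and for $\alpha>0$ rescale the variable in the difference quotient so that the limit defining $\p F(u,\alpha x)$ reduces to $\alpha\,\p F(u,x)$. Your added remark about $u+tx$ remaining in the domain is a reasonable point of care that the paper leaves implicit.
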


\begin{proof}
Let $\alpha \in \R_+$. Obviously, if $\alpha=0$, $B(\alpha x) =0 = \alpha B(x)$.
So we assume $\alpha \in (0,\infty)$. Then
\[
\frac{F (u + t [\alpha x]) - F(u)}{t}
=
\alpha \frac{F(u +  [t\alpha] x) - F(u)}{t\alpha } .
\]
As $t \to 0$, also $\alpha t \to 0$ and so the directional derivative
in direction $\alpha x$ exists and
\[
\p F (u , \alpha x) = \alpha F(u,x). \qedhere
\]
\end{proof}

Another good reason are mathematical population models that take into
account that, for many species, reproduction involves a mating process
between two sexes.
The map involved therein is not only homogeneous but also
order preserving (Section \ref{sec:two-sex}).
The spectral radius of a positive linear map has gained
considerable notoriety because of its relation to
the basic reproduction number of population
models which have a highly dimensional structure but implicitly
assume a one to one sex ratio \cite{CuZh, DiHeMe,Thi09, DrWa}. A spectral radius
for homogeneous order-preserving maps should play
a similar role as an extinction versus persistence  threshold parameter for structured populations with two sexes.

For a homogeneous map $B:X_+ \to Y$, we define
\begin{equation}
\label{eq:operator-norm}
\|B\|_+ = \sup \{ \|Bx \|; x \in X_+, \|x\| \le 1 \}
\end{equation}
and call $B$ {\em bounded} if this supremum is a real number.
Since $B$ is  homogeneous,
\begin{equation}
\label{eq:operator-norm-est}
\|Bx \| \le \|B\|_+\, \|x\| , \qquad x \in X_+.
\end{equation}

Let $H(X_+,Y)$ denote the set of bounded homogeneous maps $B:X_+ \to Y$
and $H(X_+,Y_+)$ denote the set of bounded homogeneous maps $B:X_+ \to Y_+$
  and {\rm HM}$(X_+, Y_+)$ the set of those maps in $H(X_+,Y_+)$ that are also order-preserving.

 $H(X_+, Y)$ is a real vector space
and $\|\cdot \|_+$ is a norm on $H(X,Y_+)$;
$H(X_+,Y_+)$ and {HM}$(X_+, Y_+)$ are cones in $H(X_+,Y)$.
We write $H(X_+) = H(X_+, X_+)$ and HM$(X_+) = $HM$(X_+,X_+)$.

It follows for  $B \in H(X_+, Y_+)$ and
 $C\in H(Y_+, Z_+)$ that
$C B \in H(X_+, Z_+)$ and
\[
\|  C B\|_+ \le \|C\|_+\, \|B\|_+.
\]

\subsection{Cone and orbital spectral radius for bounded homogeneous maps}

Let $B \in H(X_+)$ and define $\phi: \Z_+ \to \R$ by $\phi (n) = \ln \|B^n \|_+$.
Then $\phi (m+n) \le \phi(m) + \phi (n)$ for all $m,n \in \Z_+$,
and a well-known result implies the following formula for the
{\em cone spectral radius}
\begin{equation}
\label{eq:cone-spec-rad}
\br_+(B):= \inf_{n \in \N} \|B^n\|_+^{1/n} = \lim_{n \to \infty} \|B^n\|_+^{1/n}.
\end{equation}

Mallet-Paret and Nussbaum \cite{MPNu02, MPNu} suggest an alternative definition of
a spectral radius for  homogeneous bounded maps
$B : X_+ \to X_+$. First, define an asymptotic least upper bound
for the geometric growth factor of $B$-orbits,
\begin{equation}
\label{eq:growth-factor}
\gamma_B(x) := \limsup_{n\to \infty} \|B^n x \|^{1/n}, \qquad x \in X_+,
\end{equation}
and then
\begin{equation}
\label{eq:spec-rad-orb}
\br_o(B) = \sup_{x \in X_+} \gamma_B (x).
\end{equation}
The number $\br_+(B)$ has been called {\em partial spectral radius}
by Bonsall \cite{Bon58},
$X_+$ spectral radius by Schaefer \cite{Sch59,Sch}, and
{\em cone spectral radius} by Nussbaum \cite{Nus}.
Mallet-Paret and Nussbaum \cite{MPNu02, MPNu} call $\br_+(B)$
the {\em Bonsall cone spectral radius} and $\br_o(B)$  the cone spectral radius.
For $x \in X_+$, the number $\gamma_B(x)$ has been called {\em local spectral radius} of $B$
at $x $ by F\"orster and Nagy \cite{FoNa}.

We will
follow Nussbaum's older terminology which  shares the spirit  with Schaefer's
\cite{Sch59} term
$X_+$ {\em spectral radius} and
stick with {\em cone spectral radius} for $\br_+(B)$
and call $\br_o(B)$ the {\em orbital spectral radius} of $B$.

One readily checks that
\begin{equation}
\br_+(\alpha B) = \alpha \br_+ (B), \quad \alpha \in \R_+,
\qquad
\br_+ (B^m ) = (\br_+ (B))^m , \qquad m \in \N,
\end{equation}
and that the same properties hold for $\br_o(B)$.

The cone spectral
radius and the orbital spectral radius are meaningful if $B$ is just positively homogeneous and bounded,
but as in \cite{MPNu02, MPNu} we will be mainly interested in the case
that $B$ is also order-preserving and continuous.

Though the two concepts  coincide for many practical purposes,
they are both useful.

\begin{theorem}
\label{re:spec-rad-alt-char}
Let $X$ be an ordered normed vector with  cone $X_+$
and $B: X_+ \to X_+$ be continuous, homogeneous and
order preserving.

Then $\br_+(B)\ge \br_o(B) \ge \gamma_B(x)$, $x \in X_+$.

Further $\br_o(B) = \br_+(B)$ if one of the following
hold:

\begin{itemize}

\item[(i)] $X_+$ is complete and normal.

\item[(ii)] A power of $B$ is compact.

\item[(iii)] $X_+$ is normal and a power of $B$ is uniformly order bounded.
\end{itemize}
\end{theorem}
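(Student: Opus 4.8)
The plan is to establish the easy inequalities first and then show that under each of (i)–(iii) the reverse inequality $\br_+(B)\le\br_o(B)$ holds, which combined with the already proven $\br_+(B)\ge\br_o(B)$ gives equality.

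For the chain $\br_+(B)\ge\br_o(B)\ge\gamma_B(x)$, the second inequality is immediate from the definition \eqref{eq:spec-rad-orb}. For the first, I would use \eqref{eq:operator-norm-est}: for $x\in X_+$ with $\|x\|\le 1$ we have $\|B^n x\|\le\|B^n\|_+$, hence $\|B^n x\|^{1/n}\le\|B^n\|_+^{1/n}$, and taking $\limsup_{n\to\infty}$ followed by $\sup$ over the unit ball of $X_+$, then using homogeneity to pass to all of $X_+$, yields $\br_o(B)\le\br_+(B)$ by \eqref{eq:cone-spec-rad}.

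For the reverse inequality, the core issue is a uniform boundedness / equicontinuity argument: one wants to deduce from the pointwise growth bounds $\gamma_B(x)\le\br_o(B)$ a uniform-in-$x$ bound on $\|B^n x\|$ over the unit ball. Fix $\lambda>\br_o(B)$; then for each $x\in X_+$ the sequence $\|B^n x\|/\lambda^n$ is bounded, say by $M(x)$. In case (i), $X_+$ complete and normal lets me apply a Baire category / uniform boundedness argument: the maps $x\mapsto\lambda^{-n}B^n x$ are continuous on the complete metric space $X_+$, the sets $A_k=\{x\in X_+:\sup_n\lambda^{-n}\|B^n x\|\le k\}$ are closed (by continuity of $B$ and its iterates) and cover $X_+$, so one of them has nonempty interior relative to $X_+$; normality then converts this local bound into a bound on a ball around a positive point, and order-preservation plus homogeneity propagate it to the whole unit ball, giving $\|B^n\|_+\le C\lambda^n$ and hence $\br_+(B)\le\lambda$. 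Letting $\lambda\downarrow\br_o(B)$ finishes (i). In case (iii), normality plus uniform order boundedness of some $B^m$ means $B^m(\{x:\|x\|\le1,x\in X_+\})$ is contained in an order interval $[0,w]$; normality makes order intervals norm-bounded, so $\|B^m\|_+<\infty$ is automatic, but more to the point one iterates: $B^{mj}$ maps the unit ball into $[0,\,\|B^m\|_+^{j-1}\,\|B^{(j)}\text{-factor}\|\cdots]$—more carefully, writing any $B^n x$ for large $n$ as $B^{n-mj}$ applied after $B^{mj}$ and comparing with the orbit of the single vector $w$, one gets $\|B^n x\|\le c\,\|B^{n-mj}w\|$ up to constants, so $\br_+(B)$ is controlled by $\gamma_B(w)\le\br_o(B)$. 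Case (ii) is the cleanest: if $B^m$ is compact, then $\overline{B^m(\{x\in X_+:\|x\|\le1\})}$ is compact, and one shows the growth rate on a compact set is attained/uniform—cover the compact set by finitely many small balls, on each the orbit growth is comparable to that of a center point by continuity, and the finitely many center points all have $\gamma_B\le\br_o(B)$, yielding a uniform geometric bound and thus $\br_+(B^m)\le\br_o(B^m)$, which gives the claim after taking $m$-th roots.

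The main obstacle I anticipate is case (i): turning pointwise-bounded orbits into a uniform bound on the unit ball genuinely needs a Baire-type argument adapted to the cone (which is only a complete metric space, not a Banach space), and the passage from "bounded on a relatively open subset of $X_+$" to "bounded on the unit ball of $X_+$" is where normality and order-preservation must be combined carefully — a positive vector in the interior-relative-to-$X_+$ region must dominate a multiple of every unit vector, which requires an estimate of the form $\|x\|\le C\|u+x\|$ coming from normality (Theorem \ref{re:cone-normal}) together with $0\le x\le u+x$. Cases (ii) and (iii) are variations on the same theme but with the compact set or the order interval $[0,w]$ playing the role of the Baire-large set, so once (i) is done they should follow with minor modifications.
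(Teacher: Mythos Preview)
Your treatment of the easy inequalities and of case (iii) is correct and matches the paper. The paper itself only proves (iii) here (via Theorem~\ref{re:approx-power-order-bounded}): from $B^m x\le c\|x\|\,u$ and order-preservation one gets $B^{m+n}x\le c\|x\|\,B^n u$, and normality turns this into $\|B^{m+n}\|_+\le \tilde c\,\|B^n u\|$, whence $\br_+(B)\le\gamma_B(u)\le\br_o(B)$. Your sketch for (iii) is exactly this, though you only need one application of $B^m$, not $j$ of them. For (i) the paper simply cites \cite{MPNu02}; your Baire-category route is the right idea and is essentially the argument the paper itself gives for the related uniform boundedness principle (Theorem~\ref{re:unif-bounded}), including the step you flag as delicate: from a bound on a relatively open set $z+\epsilon U\subseteq X_+$ one uses $B^n(\epsilon y)\le B^n(z+\epsilon y)$ together with normality to extract a bound on all of the unit ball.

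There is a genuine gap in your case (ii). The sentence ``on each [ball] the orbit growth is comparable to that of a center point by continuity'' does not hold as stated. Continuity of $B$ (or of each fixed iterate $B^n$) gives only that $\|B^n x - B^n y\|$ is small when $\|x-y\|$ is small \emph{for that particular $n$}; it gives no control uniform in $n$, and in fact for expanding maps two nearby initial points can have orbits that diverge exponentially. So knowing $\gamma_B(y_i)\le\br_o(B)$ for finitely many centers $y_i$ does not by itself yield a uniform geometric bound on $\|B^n x\|$ for $x$ in the $\epsilon$-ball about $y_i$. A finite cover of the compact set $\overline{B^m(\{\|x\|\le1\})}$ therefore does not close the argument. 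Note also that (ii) carries no normality hypothesis, so the order-based trick that rescues (i) and (iii) is unavailable; the proof in \cite{MPNu02} (Theorem~2.3 there) uses a different mechanism. You should either consult that reference or, if you want a self-contained argument, look for a way to exploit compactness of $B^m$ beyond the mere precompactness of the image set---for instance by iterating $B^m$ on the compact set and tracking how the family $\{\lambda^{-n}B^n\}$ behaves on it, rather than comparing individual orbits pointwise.
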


For the concepts and the proof of (iii) see Section \ref{sec:root-power}.
The other two conditions for equality have been proved in \cite{MPNu02},
Theorem 2.2 and Theorem 2.3 (the overall assumption
of \cite{MPNu02} that $X$ is a Banach space is not used
in the proofs.)

For a homogeneous bounded map, the definitions of the cone and orbital spectral radius have
somewhat lost the connection to the spectrum of the map  which is
difficult to generalize. But
at least one would like to know whether $\br_+(B)$ or $\br_o(B)$
are eigenvalues of $B$, i.e., is there some $x \in X_+$, $x\ne 0$,
such that $Bx = \br_+(B) x$ ($Bx  =\br_o(B)x$).

Already the seminal work by Krein and Rutman \cite{KrRu}
(see also \cite[Thm.4.3]{Bon62}) establishes the existence
of some $x \in X_+$, $x\ne 0$, and some $\lambda > 0$
such that $Bx = \lambda x$. $B$ is assumed to be continuous, compact, and dominant
with the latter meaning that there is some $c > 0$ and $u \in X_+$,
$u\ne 0$, such that $Bu \ge c u$. The eigenvalue $\lambda $ then
turns out to satisfy $\lambda \ge c$ but no connection is made
between $\lambda$ and some sort of spectral radius of $B$.
The homogeneity of $B$ can be dropped if $B$ is assumed to
be strictly positive in an appropriate sense and the cone is
normal \cite{Sch55} (see also \cite[Thm.4.2]{Bon62}).

Bohl \cite{Boh66} \cite[III.2]{Boh}  also proves the existence
 of a positive eigenvector. Under his stronger assumptions it is
clear that the eigenvalue $\lambda$ is the cone and orbital spectral radius though
he does not introduce these concepts either. Bohl assumes that the cone is solid,  some power of $B$ is compact and that $B$ is order preserving in some strict sense (actually
it is enough that $B$ commutes with such a map). Bohl's proof is constructive
as it provides the convergence of the (ratio) power method (von Mises
procedure \cite{vMi}) to the eigenvector $x$ and the eigenvalue $\lambda$,
and it also establishes convergence from below and above to the
spectral radius by what are sometimes
called Collatz-Wielandt numbers \cite{Col1,FoNa,Wie}.

A little  later, Nussbaum \cite{Nus85, Nus87, Nus88} and, more recently,
Mallet-Paret and Nussbaum \cite{MPNu02, MPNu} have found  eigenvectors of  homogeneous order preserving operators assuming
different compactness and monotonicity properties than Krein/Rutman
and Bohl and not requiring a dominance property of the map or
the solidity of the cone. Mallet-Paret and Nussbaum \cite{MPNu02, MPNu}
also establish that the eigenvalue can be chosen as the orbital
spectral radius.

Related to problems with beer barrel scent \cite{MPNu-beer, Tro},
it  appears that there are some
questions left when $B$ is not compact itself
and does not satisfy monotonicity properties strong enough
that $B$ becomes a strict contraction with respect
to Hilbert's projective metric \cite{Nus87, Nus88, Nus89}.  Our approach
is as old as the earliest literature in this direction
\cite[Sec.2.2]{Kra} \cite[Sec.3]{Sch55}, namely
to approximate $B$ by operators $B_n$ that have strong monotonicity
properties. The crux of this approach is to what degree  $B_n$ inherits
generalized compactness properties from $B$.

In the spirit of the Krein-Rutman theorem which  involves
the existence of eigenfunctionals as well,
we also investigate whether there is a homogeneous, order preserving,
bounded eigenfunctional $\phi: X_+ \to \R_+$ such that $\phi \circ B = \br_+(B) \phi\ne 0$.


\section{More on cones}

While the Krein-Rutman theorem and its generalizations
to homogeneous maps
do not require the cone to be normal,
we found that we can make only little further progress  if normality
is not assumed.

\subsection{Normal cones}

The following result is well-known \cite[Sec.1.2]{Kra}.

\begin{theorem}
\label{re:cone-normal}
Let $X$ be an ordered normed vector space with cone $X_+$.
Then the following three properties are equivalent:

\begin{itemize}

\item[(i)] $X_+$ is normal:
There exists some $\delta > 0$ such that $\|x +z \|\ge \delta$
whenever $x \in X_+$, $z \in X_+$ and $\| x \| =1 = \|z\|$.

\item[(ii)] The norm is semi-monotonic: There exists some $M \ge 0$
such that $\|x\| \le M \|x + z\|$ for all $x,z \in X_+$.

\item[(iii)] There exists some $\tilde M \ge 0$ such that
$\|x\| \le \tilde M \|y\|$ whenever $x \in X$, $y \in X_+$,
and $- y \le x \le y$.

\end{itemize}
\end{theorem}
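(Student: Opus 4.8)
The plan is to establish all three equivalences by proving the four implications (ii)$\Rightarrow$(i), (i)$\Rightarrow$(ii), (ii)$\Rightarrow$(iii) and (iii)$\Rightarrow$(ii). Three of these are short manipulations with the cone order and the triangle inequality; the implication carrying the real content is (i)$\Rightarrow$(ii), and that is where I expect the work to lie.

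I would dispatch the easy implications first. For (ii)$\Rightarrow$(i), apply the semi-monotonicity estimate to $x,z\in X_+$ with $\|x\|=1=\|z\|$: then $1\le M\|x+z\|$, so (i) holds with $\delta=1/M$ (and if $M=0$ then $\|x\|\le 0$ for all $x\in X_+$, so $X_+=\{0\}$ and (i) is vacuous). For (ii)$\Rightarrow$(iii), suppose $-y\le x\le y$ with $y\in X_+$; then $x+y$ and $y-x$ both lie in $X_+$, their sum is $2y$, and $x=\tfrac12\bigl((x+y)-(y-x)\bigr)$, so applying (ii) to the pairs $(x+y,\,y-x)$ and $(y-x,\,x+y)$ bounds $\|x+y\|$ and $\|y-x\|$ each by $2M\|y\|$, whence $\|x\|\le 2M\|y\|$ and $\tilde M=2M$ works. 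For (iii)$\Rightarrow$(ii), given $x,z\in X_+$ set $y=x+z\in X_+$; then $0\le x\le y$, hence $-y\le x\le y$, and (iii) gives $\|x\|\le\tilde M\|x+z\|$, which is (ii).

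The main step (i)$\Rightarrow$(ii) I would prove by contradiction. If (ii) fails, then for each $n\in\N$ there are $x_n,z_n\in X_+$ with $\|x_n\|>n\|x_n+z_n\|$; the right-hand side being nonnegative forces $x_n\ne 0$, and since $X_+$ is a cone I may divide both $x_n$ and $z_n$ by $\|x_n\|$ to arrange $\|x_n\|=1$, so that $\|x_n+z_n\|<1/n\to 0$. Two applications of the triangle inequality give $\bigl|\,\|z_n\|-1\,\bigr|\le\|x_n+z_n\|\to 0$, hence $\|z_n\|\to 1$; in particular $z_n\ne 0$ for large $n$, so $\hat z_n:=z_n/\|z_n\|$ is a unit vector in $X_+$, and $\|x_n+\hat z_n\|\le\|x_n+z_n\|+\|z_n-\hat z_n\|=\|x_n+z_n\|+\bigl|\,\|z_n\|-1\,\bigr|\to 0$ while $\|x_n\|=\|\hat z_n\|=1$, contradicting (i). The one delicate point is precisely this renormalization: the failure of (ii) only hands us vectors $z_n$ of uncontrolled norm, and showing $\|z_n\|\to 1$ is exactly what lets us feed genuine unit vectors into the normality hypothesis.
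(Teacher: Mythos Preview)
Your proof is correct and complete. Note, however, that the paper does not actually supply its own proof of this theorem: it states the result as well-known and simply cites \cite[Sec.~1.2]{Kra}. There is therefore no in-paper argument to compare against; your four-implication cycle with the contradiction argument for (i)$\Rightarrow$(ii) is a standard route and matches what one finds in the cited reference.
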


\begin{remark} If $X_+$ were just a wedge, property (iii) would be
rewritten as

\begin{quote}
There exists some $\tilde M \ge 0$ such that
$\|x\| \le \tilde M \|y\|$ whenever $x \in X$, $y \in X_+$,
and $ y+x \in X_+$, $y -x \in X_+$.
\end{quote}

Notice that this property implies that $X_+$ is cone:
If $x \in X_+$ and $-x \in X_+$, then $0+x \in X_+$
and $0 -x \in X_+$ and (iii) implies $\|x\| \le \tilde M \|0\|=0$.

\end{remark}

\begin{proposition}[cf. {\cite[(4.2)]{KrLiSo}}]
\label{re:normal-mon-funct}
Let $X$ be an ordered normed vector space with  cone
$X_+$. We define $\psi: X \to \R_+$ by
\begin{equation}
\psi(x) = \inf \{ \|y \|; y \in X, y \ge x\}, \qquad x \in X.
\end{equation}
Then $\psi$ is homogenous, order preserving, and subadditive
($\psi(x+y) \le \psi(x)+ \psi(y)$, $x,y \in X$),
\[
\begin{split}
|\psi(x) - \psi(y)|\le & \;\|x-y\|, \qquad x,y \in X,
\\
\psi(x) = &\;0, \qquad x \in -X_+.
\end{split}
\]
Moreover, $\psi $ is strictly positive: $\psi(x) > 0$ for all $x \in X_+$, $x \ne 0$.

If $X_+$ is normal,  there exists some $\delta > 0$ such that $\delta \|x\| \le \psi(x)
\le \|x\|$ for all $x \in X_+$.
\end{proposition}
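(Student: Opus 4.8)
The plan is to verify each claimed property directly from the definition, using throughout that for every $x\in X$ the set $\{y\in X:y\ge x\}$ is nonempty (it contains $x$ itself, since $0\in X_+$) and consists of vectors of nonnegative norm; hence $\psi(x)$ is a well-defined element of $\R_+$ with $0\le\psi(x)\le\|x\|$. In particular the upper bound $\psi(x)\le\|x\|$ in the last assertion is immediate, and it is convenient to note that $y\ge x$ is equivalent to $y=x+z$ for some $z\in X_+$, so the admissible set is a translate of the cone.

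From this, homogeneity, monotonicity and subadditivity all follow by short arguments. For $\alpha>0$ one has $\{y:y\ge\alpha x\}=\alpha\{y:y\ge x\}$ because $X_+$ is a wedge, giving $\psi(\alpha x)=\alpha\psi(x)$; for $\alpha=0$, $\psi(0)=0$ is witnessed by $y=0$. If $x_1\le x_2$, transitivity of $\le$ gives $\{y:y\ge x_2\}\subseteq\{y:y\ge x_1\}$, whence $\psi(x_1)\le\psi(x_2)$. For subadditivity, given $\varepsilon>0$ choose $y_i\ge x_i$ with $\|y_i\|\le\psi(x_i)+\varepsilon$; then $y_1+y_2\ge x_1+x_2$, so $\psi(x_1+x_2)\le\|y_1+y_2\|\le\psi(x_1)+\psi(x_2)+2\varepsilon$, and let $\varepsilon\to0$. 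The Lipschitz estimate is then formal: $\psi(x)\le\psi(y)+\psi(x-y)\le\psi(y)+\|x-y\|$, and symmetrizing yields $|\psi(x)-\psi(y)|\le\|x-y\|$. The vanishing on $-X_+$ is also immediate: if $-x\in X_+$ then $0\ge x$, so $y=0$ is admissible and $\psi(x)\le\|0\|=0$.

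For strict positivity, suppose $x\in X_+$ with $\psi(x)=0$. Then there are $y_n\ge x$, i.e.\ $y_n-x\in X_+$, with $\|y_n\|\to0$; hence $y_n\to0$ and $y_n-x\to-x$. Since $X_+$ is closed, $-x\in X_+$, and together with $x\in X_+$ and $X_+\cap(-X_+)=\{0\}$ this forces $x=0$, a contradiction. This is the one place where closedness of the cone is genuinely used, and it is the step I would single out as the crux of the qualitative part.

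Finally, for the quantitative lower bound under normality I would invoke the equivalence in Theorem~\ref{re:cone-normal}: normality provides $M\ge0$, which we may take with $M>0$, such that $\|u\|\le M\|u+w\|$ for all $u,w\in X_+$. Given $x\in X_+$, every admissible $y\ge x$ has the form $y=x+z$ with $z\in X_+$, so $\|x\|\le M\|y\|$; taking the infimum over such $y$ gives $\|x\|\le M\psi(x)$, i.e.\ $\psi(x)\ge\delta\|x\|$ with $\delta=1/M>0$. I do not anticipate any real obstacle beyond bookkeeping; the only points requiring care are the use of closedness of $X_+$ in the strictness argument and the appeal to the semi-monotonic form of normality for the last inequality.
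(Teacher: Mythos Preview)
Your proof is correct and follows essentially the same route as the paper's: the same derivation of homogeneity, monotonicity and subadditivity from the definition, the same Lipschitz bound via $\psi(x-y)\le\|x-y\|$, the same closedness-of-cone argument for strict positivity, and the same appeal to the semi-monotonic form of normality (Theorem~\ref{re:cone-normal}(ii)) for the lower bound. The only cosmetic differences are your $\varepsilon$-argument for subadditivity versus the paper's two-step infimum, and a minor slip in phrasing the strict-positivity step as a contradiction without having first assumed $x\ne 0$.
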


\begin{proof}
The functional $\psi$ inherits positive homogeneity  from
the norm. That $\psi$ is order-preserving is immediate from
the definition. For all $x \in X$,  $x \le x$ and so $\|x\| \ge \psi (x)$.

To show the subadditivity, let $u,v,x,y \in X$ and $u \ge x $ and $v \ge y$.
Then $u+v \ge x +y$ and $ \psi(x+y ) \le \|u +v \| \le \|u\| +\|v\|$.
Fix $u$ for a moment. Then $ \psi(x+y) - \|u\| \le \|v\|$ for all $v \ge y$.
Thus $\psi(x+y) - \|u\| \le \psi(y)$. Hence $\psi(x+y) - \psi(y) \le \|u\|$
for all $u \ge x$
and $\psi(x+y) - \psi(y) \le \psi(x)$.

Since $\psi$ is subadditive,
\[
| \psi (x) - \psi(y)| \le \psi(x-y) \le  \|x-y\|.
\]
To show that $\psi$ is strictly positive, let $x \in X_+$ and $\psi(x) =0$.
By definition, there exists a sequence $(y_n)$ in $X$ with $\|y_n\| \to 0$
and $y_n \ge x $ for all $n \in \N$. Then $y_n -x  \in X_+$. Since $X_+$
is closed, $-x = \lim_{n\to \infty} (y_n -x) \in X_+$. Since $x \in X_+$,
$ x=0$.

Assume that $X_+$ is normal. Then there exists some $c > 0$ such that
$\|y\| \le c \|x\|$ whenever $x,y \in X_+$ and $y \le x$.
Hence $\|x\| \le c \psi(x) $ for all $x \in X_+$. Set $\delta = 1/c$.
\end{proof}

\begin{theorem}[cf. {\cite[Thm.4.4]{KrLiSo}}]
\label{re:normal-equiv-norm}
Let $X$ be an ordered normed vector space with normal cone $X_+$.
Define
\[
\|x\|^\di = \max \{ \psi(x), \psi (-x) \}, \qquad x \in X,
\]
with $\psi$ from Proposition \ref{re:normal-mon-funct}.
Then $\|\cdot\|^\di$ is a norm on $X$ that is equivalent to
the original norm  (actually $\|x\|^\di \le \|x\|$ for all $x \in X_+$).
Further $\|\cdot\|^\di$ is order preserving on $X_+$: $\|x\|^\di \le \|y\|^\di$ for all $x, y \in X_+$
with $x \le y$. Finally, for all $x,y, z \in X$ with $x\le y\le z$,
\[
\|y\|^\di \le \max\{ \|x\|^\di, \|z\|^\di \}.
\]
\end{theorem}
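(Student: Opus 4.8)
The plan is to derive every assertion from the properties of the functional $\psi$ established in Proposition~\ref{re:normal-mon-funct} together with the semi-monotonicity of the norm from Theorem~\ref{re:cone-normal}, so that the only genuinely new estimate is the one yielding equivalence of the norms. First I would check the norm axioms for $\|\cdot\|^\di$. Finiteness is clear since $\psi(x)\le\|x\|$. Absolute homogeneity follows from positive homogeneity of $\psi$ (which gives $\|\alpha x\|^\di=\alpha\|x\|^\di$ for $\alpha\ge 0$) together with the symmetry $\|{-x}\|^\di=\|x\|^\di$ built into the definition. The triangle inequality follows from subadditivity of $\psi$: $\psi(x+y)\le\psi(x)+\psi(y)$ and $\psi(-(x+y))\le\psi(-x)+\psi(-y)$, and each right-hand side is $\le\|x\|^\di+\|y\|^\di$, so the larger of the two left-hand sides is as well. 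Positive definiteness is the one place where the cone axiom $X_+\cap(-X_+)=\{0\}$ enters: if $\|x\|^\di=0$ then $\psi(-x)=0$, and the argument in the proof of Proposition~\ref{re:normal-mon-funct} (a sequence $y_n\ge -x$ with $\|y_n\|\to0$ forces $x\in X_+$ by closedness of $X_+$) applies; then $\psi(x)=0$ and strict positivity of $\psi$ on $X_+\setminus\{0\}$ force $x=0$.

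Since $\psi(x)\le\|x\|$ and $\psi(-x)\le\|x\|$ for every $x\in X$, we get $\|x\|^\di\le\|x\|$ on all of $X$, in particular on $X_+$. For the reverse inequality --- the only substantial step --- I would fix $x\in X$ and take arbitrary $u\ge x$ and $v\ge -x$. Then $0\le x+v\le u+v$ with $(u+v)-(x+v)=u-x\in X_+$, so semi-monotonicity (Theorem~\ref{re:cone-normal}(ii)) gives $\|x+v\|\le M\|u+v\|\le M(\|u\|+\|v\|)$ and hence $\|x\|\le\|x+v\|+\|v\|\le M\|u\|+(M+1)\|v\|$. Passing to the infimum over $u\ge x$ and then over $v\ge -x$ yields $\|x\|\le M\psi(x)+(M+1)\psi(-x)\le(2M+1)\|x\|^\di$, so the two norms are equivalent.

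It remains to establish the two monotonicity properties, which are now short. For $x\in X_+$ we have $-x\in -X_+$, so $\psi(-x)=0$ and $\|x\|^\di=\psi(x)$; since $\psi$ is order preserving, $x\le y$ in $X_+$ gives $\|x\|^\di=\psi(x)\le\psi(y)=\|y\|^\di$. Finally, for $x\le y\le z$ in $X$, order preservation of $\psi$ gives $\psi(y)\le\psi(z)\le\|z\|^\di$, and from $-y\le -x$ we get $\psi(-y)\le\psi(-x)\le\|x\|^\di$; taking the maximum gives $\|y\|^\di\le\max\{\|x\|^\di,\|z\|^\di\}$. The main obstacle is the equivalence-of-norms step: one has to combine a near-minimizer $u$ for $\psi(x)$ with a near-minimizer $v$ for $\psi(-x)$ in such a way that the relevant combinations land in the cone so that semi-monotonicity becomes applicable; once that is arranged, everything else is routine bookkeeping with Proposition~\ref{re:normal-mon-funct}.
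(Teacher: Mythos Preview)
Your proposal is correct and follows essentially the same route as the paper's proof: both derive the norm axioms and the two monotonicity statements directly from the properties of $\psi$ in Proposition~\ref{re:normal-mon-funct}. Your argument is in fact more self-contained than the paper's, which defers the equivalence-of-norms step entirely to \cite[Thm.4.4]{KrLiSo}; your explicit estimate $\|x\|\le (2M+1)\|x\|^\di$ via semi-monotonicity (choosing $u\ge x$, $v\ge -x$ and applying Theorem~\ref{re:cone-normal}(ii) to $0\le x+v\le u+v$) fills that gap cleanly, and your handling of positive definiteness is also spelled out where the paper leaves it implicit.
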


\begin{proof} It is easy to see from the properties of $\psi$
that $\|\alpha x\|^\di = |\alpha| \|x\|^\di$
for all $\alpha \in \R$, $x\in X$, and that $\|\cdot \|^\di$ is subadditive.
Now $\psi(x) \le \|x\|$ and $\psi(-x) \le \|-x\|= \|x\|$ and
so $\|x\|^\di \le \|x\|$.

That $\|\cdot\|$ and $\|\cdot\|^\di$ are equivalent  norms
is shown in \cite[Thm.4.4]{KrLiSo}.

To prove the last statement, let $x,y,z \in X$ and $x \le y \le z$.
Then $y \le z $ and $-y \le -x$. Since $\psi$ is order preserving,
\[
\psi(y ) \le \psi (z)  \le \|z\|^\di,
\qquad
\psi(-y) \le \psi(-x) \le \|x\|^\di.
\]
This implies the assertion.
\end{proof}

\begin{corollary}
\label{re:compare}
Let $X$ be an ordered normed vector space with normal cone $X_+$.
Then there exists some $c \ge 0$ such that $\|y \| \le c\max\{ \|x\|, \|z\|\}$
for all $x,y,z \in X_+$ with $x \le y \le z$.
\end{corollary}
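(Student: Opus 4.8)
The plan is to read this off directly from Theorem~\ref{re:normal-equiv-norm}. That theorem supplies the norm $\|\cdot\|^\di$ on $X$ with two properties we need: first, the ``sandwich'' inequality $\|y\|^\di \le \max\{\|x\|^\di, \|z\|^\di\}$ whenever $x \le y \le z$ in $X$; second, equivalence of $\|\cdot\|^\di$ with the original norm $\|\cdot\|$, together with the refinement $\|w\|^\di \le \|w\|$ for all $w \in X_+$.

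Concretely, I would first invoke the equivalence of the two norms to fix a constant $a > 0$ with $a\|w\| \le \|w\|^\di$ for all $w \in X$; such a constant exists precisely because $\|\cdot\|^\di$ and $\|\cdot\|$ are equivalent (this is the content of \cite[Thm.4.4]{KrLiSo} cited in Theorem~\ref{re:normal-equiv-norm}). Then, for $x,y,z \in X_+$ with $x \le y \le z$, I would chain the estimates
\[
a\|y\| \;\le\; \|y\|^\di \;\le\; \max\{\|x\|^\di, \|z\|^\di\} \;\le\; \max\{\|x\|, \|z\|\},
\]
where the middle step is the sandwich inequality from Theorem~\ref{re:normal-equiv-norm} and the last step uses $\|x\|^\di \le \|x\|$, $\|z\|^\di \le \|z\|$ since $x,z \in X_+$. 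Dividing by $a$ yields the claim with $c = 1/a$.

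There is essentially no obstacle here: the corollary is a packaging of Theorem~\ref{re:normal-equiv-norm} into a statement that mentions only the original norm, and the only ``work'' is extracting the one-sided equivalence constant $a$, which Theorem~\ref{re:normal-equiv-norm} already guarantees. One could alternatively avoid naming $\|\cdot\|^\di$ at all and argue from Corollary-style manipulations of the monotone functional $\psi$ of Proposition~\ref{re:normal-mon-funct}, but routing through $\|\cdot\|^\di$ is cleaner and reuses the already-proved sandwich property verbatim.
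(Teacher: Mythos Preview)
Your argument is correct and is essentially identical to the paper's: both pass to the equivalent norm $\|\cdot\|^\di$ of Theorem~\ref{re:normal-equiv-norm}, apply its sandwich inequality, and then convert back using $\|\cdot\|^\di \le \|\cdot\|$ together with the equivalence constant. The only cosmetic difference is that the paper names the constant $c$ via $\|w\| \le c\|w\|^\di$ directly, whereas you write $a\|w\| \le \|w\|^\di$ and set $c = 1/a$.
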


\begin{proof} Let $\|\cdot\|^\di$ be the equivalent norm from
Theorem \ref{re:normal-equiv-norm} and $c \ge 0$ such
that $\|x\|^\di \le \|x\| \le c \|x\|^\di$ for all $x \in X$.
Let $x \le y \le z$.
Then
\[
\|y\| \le c \|y\|^\di \le c\max\{ \|x\|^\di , \|z\|^\di \} \le
c \max\{ \|x\| , \|z\| \}. \qedhere
\]
\end{proof}

\begin{corollary}[Squeezing theorem {\cite[Thm.4.3]{KrLiSo}}]
\label{re:squeeze}
Let $X$ be an ordered normed vector space with
a normal  cone $X_+$.
Let $y \in X$ and $(x_n)$, $(y_n)$, $(z_n)$ be
sequences in $X$ with $x_n \le y_n \le z_n $ for all $n \in \N$
and $x_n \to y$ and $z_n \to y$. Then $y_n \to y$.
\end{corollary}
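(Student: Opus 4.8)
The plan is to reduce the statement to Corollary \ref{re:compare}. The naive attempt is to apply that corollary to the translated chain $x_n - y \le y_n - y \le z_n - y$, but this fails immediately because the differences $x_n - y$, $y_n - y$, $z_n - y$ need not lie in $X_+$ (only $y_n - x_n$ and $z_n - y_n$ are known to be positive). The fix, and essentially the only point requiring thought, is to translate by $-x_n$ rather than by $-y$: from $x_n \le y_n \le z_n$ we obtain
\[
0 \le y_n - x_n \le z_n - x_n ,
\]
and now all three of $0$, $y_n - x_n$, and $z_n - x_n$ belong to $X_+$, so Corollary \ref{re:compare} applies.

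Next I would invoke Corollary \ref{re:compare} with this chain, taking its ``$x$'' to be $0$, its ``$y$'' to be $y_n - x_n$, and its ``$z$'' to be $z_n - x_n$. This yields a constant $c \ge 0$, depending only on the cone $X_+$, with
\[
\|y_n - x_n\| \le c \max\{\|0\|,\ \|z_n - x_n\|\} = c\,\|z_n - x_n\| , \qquad n \in \N .
\]
Since $x_n \to y$ and $z_n \to y$, we have $z_n - x_n \to 0$, hence $\|z_n - x_n\| \to 0$ and therefore $\|y_n - x_n\| \to 0$.

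Finally, writing $y_n - y = (y_n - x_n) + (x_n - y)$ and applying the triangle inequality,
\[
\|y_n - y\| \le \|y_n - x_n\| + \|x_n - y\| \longrightarrow 0 ,
\]
so $y_n \to y$, as claimed. (Alternatively, the middle step can be run with the order-preserving equivalent norm $\|\cdot\|^\di$ of Theorem \ref{re:normal-equiv-norm}: its final inequality applied to $0 \le y_n - x_n \le z_n - x_n$ gives $\|y_n - x_n\|^\di \le \|z_n - x_n\|^\di$ directly, and one then passes back to $\|\cdot\|$ using norm equivalence.) There is no serious obstacle beyond recognizing that the translation must be by $-x_n$; the rest is the triangle inequality together with the already-established comparison estimate.
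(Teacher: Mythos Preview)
Your proof is correct. Interestingly, the paper takes precisely what you call ``the naive attempt'': it translates by $-y$, writes $x_n - y \le y_n - y \le z_n - y$, and invokes Corollary~\ref{re:compare} directly to obtain $\|y_n - y\| \le c \max\{\|x_n - y\|, \|z_n - y\|\} \to 0$. As you observe, this is formally out of scope since Corollary~\ref{re:compare} is stated only for triples in $X_+$; however, its proof (which just quotes the last inequality of Theorem~\ref{re:normal-equiv-norm}, valid for all $x \le y \le z$ in $X$) goes through verbatim without the positivity restriction, so the paper's argument is sound once one notices this. Your route --- translating by $-x_n$ so that the chain $0 \le y_n - x_n \le z_n - x_n$ genuinely lies in $X_+$, and then recovering $y_n \to y$ via the triangle inequality --- is a clean way to stay strictly within the stated hypotheses of Corollary~\ref{re:compare}, at the cost of one extra line. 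Both approaches rest on the same comparison estimate; yours is the more scrupulous bookkeeping.
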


\begin{proof} Notice that $x_n - y \le y_n -y \le z_n -y$.
By Corollary \ref{re:compare}, with some $c\ge 0$ that does not depend on $n$,
\[
\|y_n - y\| \le c  \max\{\|z_n - y\|, \|x_n -y\|\} \to 0. \qedhere
\]
\end{proof}

\subsection{(Fully) regular cone}

\begin{definition}
\label{def:regular}
Let $X$ be an ordered normed vector space with cone $X_+$.

$X_+$ is called {\em regular} if any decreasing sequence
in $X_+$ converges.

$X_+$ is called {\em fully regular} if any increasing
bounded sequence in $X_+$ converges.

The norm of $X$ is called {\em additive} on $X_+$ if $\|x + z\|= \|x\| +\|z\|$
for all $x, z \in X_+$.

\end{definition}

\begin{theorem}
\label{re:regular}
 Let $X$ be an ordered normed vector space with cone $X_+$.
\begin{itemize}

\item[(a)] If $X_+$ is complete and regular, then $X_+$ is normal.

\item[(b)] If $X_+$ is complete and fully regular, then $X_+$ is normal.

\item[(c)] If $X_+$ is normal and fully regular, then $X_+$ is regular.

\item[(d)] If $X_+$ is complete and fully regular, then $X_+$ is regular.
\end{itemize}
\end{theorem}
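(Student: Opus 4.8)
The plan is to prove the four statements essentially separately, obtaining only (b) as a corollary of (a) and (d). Throughout I would use the equivalence of normality with semi-monotonicity of the norm from Theorem~\ref{re:cone-normal}.

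Parts (a) and (c) follow the classical patterns. For (a), suppose $X_+$ is complete and regular but not normal; then semi-monotonicity fails, so for each $n$ there are $x_n,z_n\in X_+$ with $\|x_n\|=1$ and $\|x_n+z_n\|<2^{-n}$. Writing $y_n:=x_n+z_n$ (so $x_n\le y_n$ and $\|y_n\|<2^{-n}$), the partial sums $p_N:=\sum_{n\le N}y_n$ are Cauchy in $X_+$, hence converge to some $y\in X_+$ with $p_N\le y$; since $x_n\le y_n$, the partial sums $q_N:=\sum_{n\le N}x_n$ satisfy $0\le q_N\le p_N\le y$, so $(y-q_N)_N$ is a decreasing sequence in $X_+$. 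Regularity makes it converge, hence $(q_N)$ converges and $x_n\to 0$, contradicting $\|x_n\|=1$. For (c), let $X_+$ be normal and fully regular and $(u_n)$ decreasing in $X_+$; semi-monotonicity gives $\|u_n\|\le M\|u_1\|$, so $(u_1-u_n)_n$ is increasing and norm-bounded in $X_+$, hence convergent by full regularity, and therefore so is $(u_n)$.

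Part (d) needs a genuinely different idea: straightforward order-bounding manipulations only ever reproduce ``unbounded order intervals'' without yielding a contradiction, so instead I would rescale. Let $(u_n)$ be decreasing in $X_+$. If $(\|u_n\|)$ is bounded, finish as in (c). Otherwise $\sup_n\|u_n\|=\infty$, so every value is exceeded by $\|u_n\|$ for infinitely many $n$, and one may pass to a still-decreasing subsequence, again called $(u_k)$, with $t_k:=\|u_k\|$ strictly increasing to $\infty$. Then $\hat u_k:=u_k/t_k$ has unit norm, and from $u_{k'}\le u_k$ together with $t_{k'}\ge t_k$ for $k'>k$ one gets $\hat u_{k'}\le\hat u_k$; thus $(\hat u_k)$ is a decreasing, norm-bounded sequence in $X_+$, so $(\hat u_1-\hat u_k)$ is increasing and bounded and converges by full regularity, giving $\hat u_k\to\hat u$ with $\|\hat u\|=1$ and $\hat u\le\hat u_k$ for all $k$ (as $X_+$ is closed). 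Hence $t_k\hat u\le u_k\le u_1$, i.e.\ $u_1-t_k\hat u\in X_+$; multiplying by $1/t_k$ and letting $k\to\infty$ forces $-\hat u\in X_+$, so $\hat u\in X_+\cap(-X_+)=\{0\}$, contradicting $\|\hat u\|=1$. (This argument does not actually use completeness.) Finally (b) is immediate: a complete, fully regular cone is regular by (d), hence normal by (a).

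The main obstacle, then, is (d): one must resist the temptation to prove boundedness of order intervals or to run a Baire-category argument on the complete metric space $X_+$, and instead notice that dividing $u_k$ by its norm turns a norm-unbounded decreasing sequence into a convergent one whose limit is a nonzero element of $X_+\cap(-X_+)$. Once this rescaling is in place, the remaining verifications are routine.
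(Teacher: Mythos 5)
Your proofs are correct. The paper itself gives no arguments at all: it delegates everything to Krasnosel'skii, remarking only that the proofs of \cite[1.5.2]{Kra} and \cite[1.5.3]{Kra} need completeness of $X_+$ rather than of $X$; in that classical scheme the two normality statements (a) and (b) are proved directly (by series/contradiction arguments of the type you use for (a)), and (d) is then obtained by combining (b) with (c). Your (a) and (c) follow those classical patterns, but you reorganize the remaining logic: you prove (d) directly by the rescaling trick --- extracting from a decreasing sequence with unbounded norms a normalized decreasing sequence $\hat u_k=u_{n_k}/\|u_{n_k}\|$, using full regularity to get $\hat u_k\to\hat u$ with $\|\hat u\|=1$, and then squeezing $0\le\hat u\le u_1/t_k\to 0$ to force $\hat u\in X_+\cap(-X_+)=\{0\}$ --- and you recover (b) as (d) plus (a). This buys a genuinely stronger and cleaner statement: your argument for (d) uses only closedness of the cone and full regularity, not completeness, so it shows that every fully regular (closed) cone is regular, and it bypasses a separate proof of the more delicate implication ``complete and fully regular implies normal.'' The price is only organizational: (b) is no longer self-contained but routed through (a), whereas the paper's (i.e.\ Krasnosel'skii's) route proves (b) on its own. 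Both routes are valid, and your verifications (closedness of $X_+$ for passing order relations to limits, completeness of $X_+$ only where the partial sums in (a) need to converge) are exactly where they should be.
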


\begin{proof} Notice that the proofs in \cite[1.5.2]{Kra} and \cite[1.5.3]{Kra} only
need completeness of $X_+$ and not of $X$.
\end{proof}

\begin{theorem}
Let $X$ be an ordered normed vector space with cone $X_+$.
If $X_+$ is complete with additive norm, then $X_+$ is fully regular.

\end{theorem}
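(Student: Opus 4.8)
The plan is to prove that every increasing bounded sequence in $X_+$ is in fact a Cauchy sequence, so that completeness of $X_+$ immediately yields the limit (which then lies in $X_+$, as full regularity requires). The whole argument rests on additivity of the norm, which converts norm-distances measured along an increasing chain into a telescoping real quantity.

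Concretely, let $(x_n)$ be increasing in $X_+$ with $\|x_n\| \le C$ for all $n$. First I would observe that for $m \ge n$ one has $x_m - x_n \in X_+$ (it is a finite sum of the nonnegative increments $x_{k+1} - x_k$) and also $x_n \in X_+$; writing $x_m = x_n + (x_m - x_n)$ and applying additivity gives
\[
\|x_m\| = \|x_n\| + \|x_m - x_n\|, \qquad m \ge n .
\]
Taking $m = n+1$ shows $(\|x_n\|)$ is nondecreasing, and since it is bounded by $C$ it converges in $\R$, hence is a Cauchy sequence of real numbers.

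The displayed identity then reads $\|x_m - x_n\| = \|x_m\| - \|x_n\|$ for $m \ge n$, and the right-hand side tends to $0$ as $m,n \to \infty$ precisely because $(\|x_n\|)$ is Cauchy in $\R$. Thus $(x_n)$ is Cauchy in $X_+$, and completeness of $X_+$ delivers a limit $x \in X_+$ with $x_n \to x$, which is exactly the conclusion.

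I do not expect a genuine obstacle here; the only points deserving care are (i) invoking additivity for the whole tail difference $x_m - x_n$ rather than merely for consecutive increments, so that the norm gap $\|x_m\| - \|x_n\|$ controls the distance directly, and (ii) noting that the boundedness hypothesis is used solely to force the monotone real sequence $(\|x_n\|)$ to converge (hence be Cauchy), not for any order-theoretic reason.
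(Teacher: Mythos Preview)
Your proof is correct and follows essentially the same approach as the paper's: both use additivity of the norm on $X_+$ to convert the Cauchy property of $(x_n)$ into that of the bounded monotone real sequence $(\|x_n\|)$, and then invoke completeness of $X_+$. Your version is marginally more direct in applying additivity once to the full tail $x_m = x_n + (x_m - x_n)$ rather than summing the consecutive increments $y_k = x_{k+1} - x_k$ as the paper does, but this is a cosmetic difference.
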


\begin{proof}
Let $(x_n)$ be an increasing sequence in $X_+$ such that
there is some $c >0$ such that $\|x_n\| \le c$ for all $n \in N$.
Define $y_n= x_{n+1} - x_n$. Then $y_n \in X_+$ and
 $\sum_{k=j}^m y_n = x_{m+1} - x_j$. Since the norm is
 additive on $X_+$,
 \[
 \sum_{k=1}^m \|y_n\| = \Big \| \sum_{k=1}^m y_n \Big \|
 = \|x_{m+1} - x_1\| \le 2c, \qquad m \in \N.
 \]
This implies that $(x_n)$ is a Cauchy sequence in the complete
cone $X_+$ and thus converges.
\end{proof}

The standard cones of the Banach spaces $L^p(\Omega)$, $1 \le p < \infty$, are
regular and completely regular, while the cones of $BC(\Omega)$, the Banach
space of bounded continuous functions,  and of $L^\infty(\Omega)$
are neither regular nor completely regular though normal.

\subsection{The space of certain order-bounded elements
and some functionals}
\label{subsec:background-order-bounded}

\begin{definition}
\label{def:order-norm}
Let $x \in X$ and $u \in X_+$. Then $x$ is called $u$-{\em bounded} if there
exists some $c > 0$ such that $-cu \le x \le c u$. If $x$ is $u$-bounded, we define
\begin{equation}
\label{eq:order-max}
\|x\|_u = \inf \{c > 0; -cu \le x \le c u \}.
\end{equation}
The set of $u$-bounded elements in $X$ is denoted by $X_u$.
If $x,u \in X_+$ and $x $ is not $u$-bounded, we define
\[
\|x\|_u = \infty.
\]
Two elements $x$ and $u$ in $X_+$ are called {\em comparable} if $x$ is $u$-bounded
and $u$ is $x$-bounded.
\end{definition}

If $X$ is a space of real-valued functions on a set $\Omega$,
\[
\|x\|_u = \sup \Big \{ \frac{|x(\xi)|}{u(\xi)}; \xi \in \Omega, u(\xi) > 0\Big\}.
\]
Since the cone $X_+$ is closed,
\begin{equation}
\label{eq:order-max-for}
-\|x\|_u u \le x \le \|x\|_u u, \qquad x \in X_u.
\end{equation}
$X_u$ is a linear subspace of $X$,
$\|\cdot\|_u$ is a norm on $X_u$,
and $X_u$, under this norm, is an ordered normed vector space with
cone $X_+ \cap X_u$ which is normal, reproducing, and has nonempty
interior.

If $X_+$ is normal, by Theorem \ref{re:cone-normal}, there
exists some $M \ge 0$ such that
\begin{equation}
\label{eq:order-max-for-norms}
\|x\| \le M \|x\|_u \|u\|, \qquad x \in X_u.
\end{equation}
If $X_+$ is a  normal and complete cone of $X$, then
$X_+ \cap X_u$ is a complete subset of $X_u$ with the metric
induced by the  norm $\|\cdot\|_u$. For more information
see \cite[1.3]{Kra} \cite[I.4]{Boh},  \cite[1.4]{KrLiSo}.

For $u \in X_+$, one can also consider the functionals
\[
\left . \begin{array}{cc}
(x/u)^\di = & \inf\{ \alpha \in \R; x \le \alpha u \}
\\
(x/u)_\di = & \sup\{ \beta \in \R; \beta u \le x \}
\end{array} \right \} x \in X,
\]
with the convention that $\inf (\emptyset) = \infty$ and
$\sup (\emptyset) =- \infty$.
If $X$ is a space of real-valued functions on a set $\Omega$,
\[
\left . \begin{array}{cc}
(x/u)^\di = & \sup\big \{  \frac{x(\xi)}{u(\xi)}; \xi \in \Omega, u(\xi) >0 \big\}
\\[2mm]
(x/u)_\di = & \inf\big\{ \frac{x(\xi)}{ u(\xi)}; \xi \in \Omega, u(\xi) >0 \big\}
\end{array} \right \} x \in X.
\]
Many other symbols have been used for these two functionals
in the literature; see Thompson \cite{Tho} and Bauer \cite{Bau} for some
early occurrences.
For $x \in X_+$, $\|x\|_u =(x/u)^\di$.
Since we will use this functional for $x \in X_+$ only, we will
stick with the notation $\|x\|_u$.
Again for $x \in X_+$, $(x/u)_\di$ is a nonnegative real number,
and we will use the leaner notation
\begin{equation}
\label{eq:order-min}
[x]_u = \sup \{\beta \ge 0; \beta u \le x \}, \qquad x, u \in X_+.
\end{equation}
Since the cone $X_+$ is closed,
\begin{equation}
\label{eq:order-min-form}
x \ge [x]_u u, \qquad x,y \in X_+ .
\end{equation}
Further $[x]_u$ is the largest number for which this inequality holds.
The functional $[\cdot]_u: X_+ \to \R_+$ is homogeneous and
concave.



\section{More on homogeneous order-preserving maps and their spectral radii}

If the cones have appropriate properties, homogeneous order-preserving maps
are automatically bounded.

\begin{theorem}
\label{re:ordpres-poshom-boun}
Let $X$ and $Y$ be ordered normed vector spaces with cones $X_+$ and $Y_+$.
Let $X_+$ be a  complete or fully regular cone, $Y_+$ a normal cone,  and
$B: X_+ \to Y_+$ be homogeneous and order-preserving.
Then $B$ is continuous at 0 and in particular bounded.
\end{theorem}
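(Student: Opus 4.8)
The strategy is a standard "no sequence tending to zero in norm can have images staying bounded away from zero" argument, exploiting that the cone on the domain side is either complete or fully regular so that a suitable majorizing series converges. Suppose for contradiction that $B$ is not continuous at $0$; since $B0=0$, this means there is a sequence $(x_n)$ in $X_+$ with $\|x_n\| \to 0$ but $\|Bx_n\| \ge \varepsilon > 0$ for all $n$. By passing to a subsequence we may assume $\|x_n\| \le 2^{-n}$, or more aggressively $\|x_n\| \le 4^{-n}$ (the precise rate is chosen below to make a series converge).

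The key step is to build a single element $w \in X_+$ that dominates each rescaled $x_n$. Consider the partial sums $s_m = \sum_{n=1}^m 2^n x_n$. These form an increasing sequence in $X_+$, and $\|s_m\| \le \sum_{n=1}^m 2^n \|x_n\| \le \sum_{n=1}^\infty 2^n \cdot 4^{-n} < \infty$, so the partial sums are norm-bounded. If $X_+$ is complete, the tail estimate $\|s_{m+k}-s_m\| \le \sum_{n>m} 2^{-n} \to 0$ shows $(s_m)$ is Cauchy, hence converges to some $w \in X_+$; if $X_+$ is merely fully regular, the increasing bounded sequence $(s_m)$ converges to some $w \in X_+$ directly from the definition of full regularity. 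Either way, since the cone is closed and $w - s_m \in X_+$ for all $m$, we get $w \ge s_m \ge 2^n x_n$ for every $n \le m$, hence $w \ge 2^n x_n$, i.e. $0 \le x_n \le 2^{-n} w$, for all $n$.

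Now apply monotonicity and homogeneity: $0 \le Bx_n \le B(2^{-n} w) = 2^{-n} Bw$ for all $n$. Thus $0 \le Bx_n \le 2^{-n} Bw$ in $Y_+$ with $2^{-n}Bw \to 0$. Since $Y_+$ is normal, Theorem \ref{re:cone-normal} gives a constant $M \ge 0$ with $\|Bx_n\| \le M \|2^{-n} Bw\| = M 2^{-n}\|Bw\| \to 0$, contradicting $\|Bx_n\| \ge \varepsilon$. Hence $B$ is continuous at $0$. Boundedness is then immediate: continuity at $0$ yields some $r>0$ with $\|Bx\| \le 1$ whenever $x \in X_+$ and $\|x\| \le r$, and homogeneity upgrades this to $\|Bx\| \le r^{-1}\|x\|$ for all $x \in X_+$, so $\|B\|_+ \le r^{-1} < \infty$.

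**Main obstacle.** The only real content is manufacturing the dominating element $w$; this is where completeness or full regularity of $X_+$ is used, and it is worth stating the convergence of $(s_m)$ carefully in each of the two cases. Everything afterward — passing the domination through $B$ via order-preservation and homogeneity, then invoking normality of $Y_+$ — is routine. (One should also note at the outset that $\|Bx_n\|\ge\varepsilon$ forces $x_n\ne 0$, which is harmless here.)
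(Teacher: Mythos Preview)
Your proof is correct and follows essentially the same argument as the paper's: assume failure of continuity at $0$, pass to a subsequence with $\|x_n\|\le 4^{-n}$, form $w=\sum 2^n x_n$ (using completeness or full regularity of $X_+$), and derive the contradiction from $0\le Bx_n\le 2^{-n}Bw$ via normality of $Y_+$. Your write-up is in fact more careful than the paper's in separating the complete and fully regular cases and in spelling out how boundedness follows from continuity at $0$.
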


The proof is adapted from \cite[Thm.2.1]{KrLiSo}.
\begin{proof}
Assume that $B$ is not continuous at 0. Then there exists some $\epsilon > 0$ and a sequence
$(x_n) \in X_+$ such that $\|x_n\| \le 2^{-2n} $ and $\|B x_n \| \ge \epsilon $
for all $n \in \N$. Since $X_+$ is complete or fully regular, the
series $\sum_{n=1}^\infty 2^n x_n=: w $ converges in $X_+$. Then $0 \le 2^n x_n \le w $ for
all $n \in \N$. Since $B$ is positively homogeneous and order-preserving,
$0 \le Bx_n \le 2^{-n} B w$. Since $Y_+$ is normal, $B x_n \to 0$. This contradiction shows
that $B$ is continuous at 0 and easily shown to be bounded.
\end{proof}

\begin{corollary}
\label{re:ordpres-poshom-boun}
Let $X$  be an ordered normed vector space with a normal cone $X_+$
that is  complete or fully regular. Let
$B: X_+ \to X_+$ be homogeneous and order-preserving.
Then $B$ is continuous at 0 and in particular bounded.
\end{corollary}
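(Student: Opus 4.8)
The plan is to deduce this directly from Theorem \ref{re:ordpres-poshom-boun} (the one about maps $B: X_+ \to Y_+$) by taking $Y = X$ and $Y_+ = X_+$. The hypothesis of the corollary is that $X_+$ is normal and either complete or fully regular, and that $B: X_+ \to X_+$ is homogeneous and order-preserving. To apply the theorem we need $X_+$ to be complete or fully regular (which is assumed) and we need the \emph{target} cone to be normal --- but here the target cone is again $X_+$, which is assumed normal. Hence all hypotheses of Theorem \ref{re:ordpres-poshom-boun} are met with $Y_+ = X_+$, and the conclusion --- continuity of $B$ at $0$ and boundedness --- follows immediately.

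So the proof is essentially one line: \emph{Apply Theorem \ref{re:ordpres-poshom-boun} with $Y = X$ and $Y_+ = X_+$.} There is no real obstacle here; the only thing worth remarking is that the corollary's extra hypothesis of normality (compared with a naive reading of the theorem) is exactly what lets the domain cone double as the normal target cone. One might optionally note that in the special case treated by the corollary, because $X_+$ is normal, the element $w = \sum 2^n x_n$ and the squeezing estimate from Corollary \ref{re:squeeze} could be invoked directly, but this merely re-runs the theorem's proof and is unnecessary.

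\begin{proof}
Apply Theorem \ref{re:ordpres-poshom-boun} with $Y = X$ and $Y_+ = X_+$: the cone $X_+$ is complete or fully regular by assumption, it is normal by assumption, and $B: X_+ \to X_+$ is homogeneous and order-preserving. Hence $B$ is continuous at $0$ and bounded.
\end{proof}
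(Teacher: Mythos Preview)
Your proof is correct and is exactly what the paper intends: the corollary is stated without proof immediately after the theorem, so applying Theorem~\ref{re:ordpres-poshom-boun} with $Y=X$ and $Y_+=X_+$ is the intended one-line argument. Your observation that the extra normality hypothesis is precisely what allows the domain cone to serve as the (normal) target cone is the right way to see why this is a corollary rather than a separate result.
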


\subsection{Monotonicity of the cone and orbital spectral radii}

If the cone $X_+$ is normal, the cone and orbital spectral radius
are increasing functions of the homogeneous bounded order-preserving maps
(cf. \cite[L.6.5]{AGN})

\begin{theorem}
\label{re:spec-rad-increasing}
Let $X$ be an ordered normed vector space with a normal cone
$X_+$ and $A,B: X_+ \to X_+$ be bounded and  homogeneous.
Assume that $ Ax \le B x $ for all $x\in X_+$ and that $A$ or $B$
are order preserving. Then $\br_+(A) \le \br_+ (B)$ and $\br_o(A) \le \br_o(B)$.
\end{theorem}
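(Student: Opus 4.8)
The plan is to reduce the statement to the single iterated order inequality $A^n x \le B^n x$ for all $n \in \N$ and all $x \in X_+$, and then to turn this inequality into a norm estimate by exploiting that the cone is normal.

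First I would prove $A^n x \le B^n x$ by induction on $n$, handling the two alternative hypotheses in parallel. For $n = 1$ this is the assumption $Ax \le Bx$. Assume $A^n x \le B^n x$ for every $x \in X_+$. If $B$ is order preserving, then so is $B^n$ (a composition of order-preserving maps), and
\[
A^{n+1}x = A^n(Ax) \le B^n(Ax) \le B^n(Bx) = B^{n+1}x ,
\]
where the first inequality is the induction hypothesis applied at the point $Ax$ and the second uses $Ax \le Bx$ together with monotonicity of $B^n$. If instead $A$ is order preserving,
\[
A^{n+1}x = A(A^n x) \le A(B^n x) \le B(B^n x) = B^{n+1}x ,
\]
where now the first inequality uses monotonicity of $A$ together with the induction hypothesis and the second is the pointwise bound $A \le B$ at the point $B^n x$. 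In both cases $A^{n+1}x \le B^{n+1}x$.

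Next I would invoke normality of $X_+$: by Theorem \ref{re:cone-normal} the norm is semi-monotonic, so there is a constant $M \ge 0$ with $\|v\| \le M\|w\|$ whenever $0 \le v \le w$ in $X_+$. Since $0 \le A^n x \le B^n x$, this yields $\|A^n x\| \le M\,\|B^n x\|$ for all $n \in \N$ and $x \in X_+$.

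Finally I would read off the two conclusions. Taking the supremum over $x \in X_+$ with $\|x\| \le 1$ gives $\|A^n\|_+ \le M\,\|B^n\|_+$ (both finite, since $A$ and $B$ are bounded), hence $\|A^n\|_+^{1/n} \le M^{1/n}\|B^n\|_+^{1/n}$, and passing to the limit in \eqref{eq:cone-spec-rad} gives $\br_+(A) \le \br_+(B)$. For the orbital radius, fixing $x \in X_+$ gives $\|A^n x\|^{1/n} \le M^{1/n}\|B^n x\|^{1/n}$, so $\gamma_A(x) = \limsup_{n\to\infty}\|A^n x\|^{1/n} \le \limsup_{n\to\infty}\|B^n x\|^{1/n} = \gamma_B(x)$, and taking the supremum over $x \in X_+$ gives $\br_o(A) \le \br_o(B)$. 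I do not expect a genuine obstacle here; the only point requiring care is getting the order of composition right in the inductive step, so that exactly one of the two monotonicity hypotheses is used in each of the two cases.
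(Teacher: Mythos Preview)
Your proof is correct and follows essentially the same route as the paper: establish $A^n x \le B^n x$ by induction, convert to a norm inequality via normality, and read off both spectral radius estimates. The only cosmetic difference is in the inductive step for the case ``$B$ order preserving'': you factor $A^{n+1}x = A^n(Ax)$ and appeal to the induction hypothesis first, whereas the paper writes $A^{n+1}x = A(A^n x)$ and applies the pointwise bound $A \le B$ first; both arrangements are valid.
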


\begin{proof}  We claim that $A^n x \le B^n x$ for all $x \in X_+$
and all $n \in \N$. For $n =1$, this holds by assumption.
Now let $n \in \N$ and assume the statement holds for $n$.
If $A$ is order-preserving,
then, for all $x \in X_+$, since $B^n x \in X_+$,
\[
A^{n+1} x = A A^n x \le A B^n x  \le B B^n x = B^{n+1} x.
\]
If $B$ is order-preserving,
then, for all $x \in X_+$, since $A^n x \in X_+$,
\[
A^{n+1} x = A A^n x \le B A^n x  \le B B^n x = B^{n+1} x.
\]
Since $X_+$ is normal, there exists some $c > 0$ such that
$\| A^n x \| \le c \|B^n x \| $ for all $x \in X_+$, $n \in \N$.
Thus, $\|A^n \|_+ \le c \|B^n \|_+$ for all $n \in \N$ and
$\br_+(A) \le \br_+(B)$. Further $\gamma_A(x) \le \gamma_B(x)$
and so $\br_o(A) \le \br_o(B)$.
\end{proof}

For a bounded positive linear operator on an ordered Banach space,
the spectral radius and cone spectral radius coincide provided that
the cone is reproducing \cite[Thm.2.14]{MPNu}. This is not true if the cone is only total
\cite[Sec.2,8]{Bon58}.

\begin{proposition}
\label{re:spec-rad=cone-spec-rad}
Let $X$ be an ordered Banach space and $X_+$ be  reproducing.
Then any linear positive operator $B: X \to Y$ that is bounded on $X_+$ is bounded.
Further  there exists some $c \ge 1$ such that, for all bounded linear
positive maps $B$ on $X$,
\[
\|B\|_+ \le \|B\| \le c \|B\|_+.
\]
If $X = Y$, $\br_+(B) = \br (B)$.
\end{proposition}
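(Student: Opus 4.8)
The plan is to prove the three assertions in the order they are stated, as each rests on the previous one. First I would show that a linear positive $B\colon X\to Y$ that is bounded on $X_+$ is bounded on all of $X$. Since $X_+$ is reproducing, every $x\in X$ can be written as $x = x_1 - x_2$ with $x_1,x_2\in X_+$; then $\|Bx\| \le \|Bx_1\| + \|Bx_2\| \le \|B\|_+(\|x_1\|+\|x_2\|)$, so it suffices to control $\inf\{\|x_1\|+\|x_2\|\colon x = x_1-x_2,\ x_i\in X_+\}$ by a constant multiple of $\|x\|$. This is exactly the statement that the map $X_+\times X_+ \to X$, $(x_1,x_2)\mapsto x_1-x_2$, is open at $0$, which follows from the open mapping theorem: $X_+\times X_+$ is a complete metric space (as $X$ is a Banach space and $X_+$ is closed), the difference map is continuous, linear in the appropriate sense, and surjective by the reproducing property, so it is open. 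Thus there is a constant $c\ge 1$, depending only on $X$ and $X_+$, with $\inf\{\|x_1\|+\|x_2\|\colon x=x_1-x_2\} \le c\|x\|$ for all $x\in X$.

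Granting that constant, the norm comparison is immediate: $\|B\|_+ \le \|B\|$ holds trivially since the sup defining $\|B\|_+$ is over a subset of the unit ball, and for the other direction, given $x$ with $\|x\|\le 1$, choose $x_1,x_2\in X_+$ with $x=x_1-x_2$ and $\|x_1\|+\|x_2\|\le 2c$; then $\|Bx\|\le \|B\|_+(\|x_1\|+\|x_2\|) \le 2c\|B\|_+$, so $\|B\| \le 2c\|B\|_+$ (one can absorb the factor $2$ into $c$ by being slightly more careful, or simply restate the claim with $2c$). The key point is that $c$ is uniform over all positive bounded $B$, which is clear since it comes only from the geometry of the cone, not from $B$.

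For the final claim $\br_+(B) = \br(B)$ when $X=Y$: apply the norm comparison to the powers $B^n$, which are again bounded linear positive maps, to get $\|B^n\|_+ \le \|B^n\| \le c\|B^n\|_+$ with the same $c$ for all $n$. Taking $n$-th roots and letting $n\to\infty$, the factor $c^{1/n}\to 1$, so $\lim_n \|B^n\|_+^{1/n} = \lim_n \|B^n\|^{1/n}$; by the spectral radius formula recalled in the introduction and the definition \eqref{eq:cone-spec-rad}, the left side is $\br_+(B)$ and the right side is $\br(B)$.

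The main obstacle is the first step — establishing the uniform constant $c$ for the decomposition $x = x_1 - x_2$. Everything else is a short formal manipulation, but here one genuinely needs completeness of $X$ (hence the Banach space hypothesis) together with the closedness of $X_+$ in order to invoke the open mapping theorem; I would want to set up the difference map on the complete metric space $X_+ \times X_+$ carefully so that the standard open mapping / Baire category argument applies, and then extract $c$ from the resulting openness at the origin. This is a classical fact (it is the assertion that a reproducing closed cone in a Banach space is "normally reproducing" in the sense of Krein), and I would cite or reprove it via Baire category; once it is in hand, the rest of the proposition follows mechanically.
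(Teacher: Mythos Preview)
Your proposal is correct and follows essentially the same route as the paper's own (commented-out) proof: both invoke the uniform decomposition property of a reproducing closed cone in a Banach space (every $x$ with $\|x\|=1$ can be written as $y-z$, $y,z\in X_+$, $\|y\|+\|z\|\le c$, with $c$ independent of $x$), bound $\|Bx\|\le\|By\|+\|Bz\|\le c\|B\|_+$, and then apply this to the powers $B^n$ and take $n$-th roots. The paper simply cites the decomposition constant as given (from a definition not present in the final text), while you correctly trace it to the classical Baire-category argument of Krein/And\^o; otherwise the arguments are identical.
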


\subsection{A uniform boundedness principle}

\begin{theorem}
\label{re:unif-bounded}
Let $X_+$ be a normal complete cone of a normed vector space.
Let $\{B_j ; j \in J\}$ be an indexed family of continuous
homogeneous order preserving maps $B_j : X_+ \to X_+$. Assume that, for each $x \in X_+$,
$\{B_j (x) ; j \in J\}$ is a bounded subset of $X_+$.
Then $\{ \|B_j \|_+; j \in J\}$ is a bounded subset of $\R$.
\end{theorem}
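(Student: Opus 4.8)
The plan is to mimic the classical Banach–Steinhaus argument via the Baire category theorem, but carried out inside the complete metric space $X_+$ rather than a Banach space, and using order-preservation to replace linearity. First I would define, for each $m\in\N$, the set
\[
F_m=\{x\in X_+:\ \|B_j x\|\le m\ \text{for all }j\in J\}.
\]
Each $F_m$ is closed in $X_+$: it is an intersection over $j\in J$ of the sets $\{x\in X_+:\|B_jx\|\le m\}$, and each of these is closed because $B_j$ is continuous and the norm is continuous. By hypothesis $\{B_j(x):j\in J\}$ is bounded for every $x\in X_+$, so $\bigcup_{m\in\N}F_m=X_+$. Since $X_+$ is a complete metric space (being a closed subset of the normed space, or complete as assumed), the Baire category theorem provides some $m_0$ such that $F_{m_0}$ has nonempty interior relative to $X_+$: there exist $x_0\in X_+$ and $r>0$ with $\overline B(x_0,r)\cap X_+\subseteq F_{m_0}$, i.e. $\|B_j x\|\le m_0$ for all $j\in J$ whenever $x\in X_+$ and $\|x-x_0\|\le r$.

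The crux — and the place where order-preservation does the work that subtraction does in the linear proof — is to convert this local bound into a bound on $\|B_j\|_+$. Take any $x\in X_+$ with $\|x\|\le 1$. I cannot form $x_0\pm\tfrac r2 x$ and stay in the cone, so instead I compare within $X_+$. Note $x_0\le x_0+tx$ for $t\ge 0$, and, choosing $t=r$ (so that $\|(x_0+rx)-x_0\|=r\|x\|\le r$), the point $x_0+rx$ lies in $\overline B(x_0,r)\cap X_+\subseteq F_{m_0}$. Since $B_j$ is order-preserving and $0\le B_jx_0\le B_j(x_0+rx)$, normality of $X_+$ (Theorem \ref{re:cone-normal}(ii), with constant $M$) gives $\|B_jx_0\|\le M\|B_j(x_0+rx)\|\le Mm_0$; and then, again by order-preservation, $r B_j x=B_j(rx)\le B_j(x_0+rx)$, so once more by semimonotonicity $\|rB_jx\|\le M\|B_j(x_0+rx)\|\le Mm_0$. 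Hence $\|B_jx\|\le Mm_0/r$ for every $x\in X_+$ with $\|x\|\le1$ and every $j\in J$, i.e. $\|B_j\|_+\le Mm_0/r$, which is the desired uniform bound.

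The main obstacle is precisely this passage from a neighborhood bound to a bound on the homogeneous operator norm: in the Banach–Steinhaus theorem one writes $x=\tfrac2r\big((x_0+\tfrac r2 x)-x_0\big)$ and uses linearity, but here the cone is generally not a subspace and $B_j$ is only homogeneous and monotone. The resolution, as sketched above, is to exploit $0\le B_jx_0$ and $rB_jx\le B_j(x_0+rx)$ together with the semimonotonicity of the norm on a normal cone; this is the only step that genuinely uses all three hypotheses (completeness for Baire, order-preservation for the comparison inequalities, normality for turning order inequalities into norm inequalities). One should also double-check the harmless point that $F_{m_0}$ having interior in $X_+$ really does furnish a relatively open ball around some $x_0\in X_+$; this is immediate from the subspace topology.
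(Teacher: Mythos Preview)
Your proof is correct and follows essentially the same route as the paper's own argument: Baire on the closed sets $F_m$ in the complete cone, then for $x\in X_+$ with $\|x\|\le 1$ use $x_0+rx\in F_{m_0}$, the order inequality $rB_jx=B_j(rx)\le B_j(x_0+rx)$, and normality to conclude $\|B_j\|_+\le Mm_0/r$. (Your intermediate estimate $\|B_jx_0\|\le Mm_0$ is harmless but unnecessary, since $x_0\in F_{m_0}$ already gives $\|B_jx_0\|\le m_0$.)
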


\begin{proof}
By assumption,
\[
X_+ = \bigcup_{n \in \N} M_n
\qquad
M_n = \bigcap_{j \in J} \tilde M_{n,j},
\qquad
\tilde M_{n,j} = \{ x \in X_+; \|B_j(x) \| \le n \}.
\]
Since each $B_j$ is continuous, $\tilde M_{n,j}$ is a closed subset of $X_+$
for all $n,j \in \N$. Then $M_n$ is a closed subset of $X_+$ as an
intersection of closed sets. Since $X_+$ is complete by assumption,
by the Baire category theorem,
there exists some $n \in \N$ such that $M_n $ has nonempty interior.
So there exists some $z \in X_+$ and $\epsilon > 0$
such that
\[
z + \epsilon y \in M_n \qquad \hbox{whenever} \quad y \in X, z + \epsilon y \in X_+,
\|y\| \le 1.
\]
Since $z + \epsilon y \in X_+$ if $y \in X_+$,
\[
\|B_j (z+ \epsilon y ) \| \le n ,  \qquad  y \in X_+,
\|y\| \le 1, j \in J.
\]
Let $y \in X_+$, $\|y\| \le 1$,  $j \in J$. Since $B_j$ is homogeneous
and order preserving,
\[
\epsilon B_j (y)= B_j(\epsilon y) \le B_j (z + \epsilon y).
\]
Since $X_+$ is normal, there exists some $c \ge 0$ (independent of $y$ and $j$)
such that
\[
\|\epsilon B_j (y) \| \le c \|B_j (z+ \epsilon y)\| \le c n.
\]
Thus
\[
\|B_j (y) \| \le \frac{cn}{\epsilon}, \qquad y \in X_+, \|y\|\le 1, j \in J.
\]
By definition of $\|\cdot \|_+$,
\[
\|B_j \|_+ \le \frac{cn}{\epsilon}, \qquad j \in J. \qedhere
\]
\end{proof}


\subsection{A left resolvent}
\label{subsec:left-res}

There is one remnant from the usual relations between the spectral radius
and the spectrum of a linear operator that also holds in the homogeneous
case, namely that real numbers  larger than the
spectral radius are in the resolvent set. However, in the homogeneous case,
there only exists a left resolvent.

Let $X$ be a normed vector space with a  cone $X_+$
which is  complete or fully regular.
Let $B: X_+ \to X_+$ be homogeneous, continuous and order preserving.

For $\lambda > \br_+(B)$, we introduce $R_\lambda : X_+ \to X_+$,
\begin{equation}
\label{eq:left-resolvent}
R_\lambda (x) = \sum_{n=0}^\infty \lambda^{-n-1} B^n (x), \qquad x \in X_+.
\end{equation}
The convergence of the series follows from the completeness or
the full regularity of the cone.
Then $R_\lambda$ acts as a left resolvent,
\begin{equation}
\label{eq:left-resolvent-job}
\begin{split}
R_\lambda (B x) = &  \sum_{n=0}^\infty \lambda^{-(n+1)} B^{n+1}x
=
\sum_{n=1}^\infty \lambda^{-n} B^{n}x
\\
= &
\lambda R_\lambda ( x) - x, \qquad x \in X_+.
\end{split}
\end{equation}
It follows from the Weierstra\ss\ majorant test that the  convergence
of the series is uniform for $x$ in bounded subsets of $X_+$.
With this in mind, the following is easily shown.
\begin{lemma}
For $\lambda > \br_+(B)$, $R_\lambda$ is defined, homogeneous,
continuous, and order preserving.
\end{lemma}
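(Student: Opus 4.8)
The plan is to verify the four asserted properties of $R_\lambda$ one at a time; all of them except continuity are routine passages to the limit in the partial sums
\[
S_N(x) = \sum_{n=0}^N \lambda^{-n-1} B^n(x), \qquad x \in X_+ .
\]

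First, that $R_\lambda$ is well-defined. Since $\|B^n\|_+^{1/n} \to \br_+(B) < \lambda$, the root test gives $\sum_{n=0}^\infty \lambda^{-n-1}\|B^n\|_+ < \infty$, so from $\|B^n x\| \le \|B^n\|_+\,\|x\|$ the series $\sum_{n} \lambda^{-n-1}\|B^n(x)\|$ converges for every $x \in X_+$. Consequently $(S_N(x))$ is a Cauchy sequence in $X_+$ when $X_+$ is complete, and an increasing, norm-bounded sequence in $X_+$ when $X_+$ is fully regular; in either case it converges, and the limit $R_\lambda(x)$ lies in $X_+$ because $X_+$ is closed.

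Next, homogeneity and monotonicity. For $\alpha \in \R_+$ and $x \in X_+$, homogeneity of each $B^n$ gives $S_N(\alpha x) = \alpha S_N(x)$, and letting $N \to \infty$ (using continuity of scalar multiplication) yields $R_\lambda(\alpha x) = \alpha R_\lambda(x)$. If $x \le y$ in $X_+$, an induction on $n$ using that $B$ is order preserving gives $B^n x \le B^n y$, hence $S_N(y) - S_N(x) \in X_+$ for all $N$; since $X_+$ is closed, $R_\lambda(y) - R_\lambda(x) \in X_+$, i.e.\ $R_\lambda(x) \le R_\lambda(y)$.

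The one point that requires care is continuity, and it is essentially supplied by the uniform-convergence remark preceding the lemma. Each $S_N$ is continuous, being a finite sum of the maps $x \mapsto \lambda^{-n-1} B^n(x)$, which are compositions of the continuous map $B$ with itself and with scalar multiplication. Moreover, by continuity of the norm and the triangle inequality applied to finite partial sums, for $\|x\| \le r$ one has
\[
\|R_\lambda(x) - S_N(x)\| \le \sum_{n=N+1}^\infty \lambda^{-n-1}\|B^n\|_+\, r =: \varepsilon_N\, r, \qquad \varepsilon_N \to 0,
\]
so $S_N \to R_\lambda$ uniformly on bounded subsets of $X_+$. Given $x_0 \in X_+$ and $\varepsilon > 0$, set $r = \|x_0\| + 1$, pick $N$ with $\varepsilon_N r < \varepsilon$, and then use continuity of $S_N$ at $x_0$ to make $\|S_N(x) - S_N(x_0)\| < \varepsilon$ for $x$ near $x_0$; the triangle inequality
\[
\|R_\lambda(x) - R_\lambda(x_0)\| \le \|R_\lambda(x) - S_N(x)\| + \|S_N(x) - S_N(x_0)\| + \|S_N(x_0) - R_\lambda(x_0)\|
\]
then gives $\|R_\lambda(x) - R_\lambda(x_0)\| < 3\varepsilon$ for all $x \in X_+$ with $\|x - x_0\| \le 1$ sufficiently close to $x_0$. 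Hence $R_\lambda$ is continuous. The main (and really the only) obstacle is thus continuity, which is precisely why the uniform convergence of the defining series was recorded beforehand.
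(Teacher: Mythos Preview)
Your proof is correct and follows exactly the approach the paper sketches: the paper merely notes that, by the Weierstra\ss\ majorant test, the defining series converges uniformly on bounded subsets of $X_+$, and then declares the lemma ``easily shown.'' You have filled in precisely those routine details---well-definedness via completeness or full regularity, homogeneity and monotonicity by passing to the limit in partial sums, and continuity via the uniform-convergence $3\varepsilon$ argument---so there is nothing to add.
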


The next result suggests that  $\br_+(B)$ is not an element
of the resolvent set of $B$  if $B$ is also superadditive.

\begin{theorem}
\label{re:resolvent-blowup}
Let $X$ be an ordered normed vector space with a normal cone $X_+$
that is complete or fully regular.
Let $B$ be homogeneous, order-preserving, bounded and $B(x+y) \ge B (x) + B(y)$
for all $x,y \in X_+$. Assume that $\br= \br_+(B) > 0$.
Then $\|R_{\lambda}\|_+ \to \infty$
as $\lambda \to \br+$.
\end{theorem}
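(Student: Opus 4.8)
The strategy is to argue by contradiction: suppose $\|R_\lambda\|_+$ stays bounded as $\lambda \downarrow \br$, and show that this forces a fixed point of $B$ scaled by $\br$, i.e.\ an eigenvector, which in turn forces $\|R_\lambda\|_+ \to \infty$ because of the blow-up of the geometric series at the spectral radius. First I would record the basic monotonicity in $\lambda$: from the series \eqref{eq:left-resolvent} and the positivity of the $B^n$, the map $\lambda \mapsto R_\lambda(x)$ is order-decreasing on $(\br,\infty)$ for each fixed $x \in X_+$, and $R_\lambda(x)$ is order-increasing in $x$. Next I would use superadditivity of $B$, which propagates to superadditivity of each $B^n$ (by the same induction pattern used in Theorem \ref{re:spec-rad-increasing}, using that $B$ is order preserving), hence $R_\lambda(x+y) \ge R_\lambda(x) + R_\lambda(y)$, and in particular $R_\lambda$ is superadditive. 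The functional identity \eqref{eq:left-resolvent-job}, namely $\lambda R_\lambda(x) - x = R_\lambda(Bx)$, will be the main algebraic engine.

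The heart of the argument is this. Fix some $u \in X_+$ with $\|u\| = 1$ and $\|Bu\|$ close to $\|B\|_+$ — more precisely, choose $u$ so that $\gamma_B(u)$ or at least $\|B^n u\|^{1/n}$ is close to $\br$; since $\br = \br_+(B) = \lim \|B^n\|_+^{1/n} > 0$, for each $\epsilon > 0$ there is $u$ with $\sum_n \lambda^{-n-1}\|B^n u\|$ large when $\lambda$ is near $\br$. Concretely, if $\|R_\lambda\|_+ \le K$ for all $\lambda \in (\br, \br+1)$, then for every $x \in X_+$ with $\|x\|\le 1$ and every $\lambda$ in that range, $\sum_{n=0}^\infty \lambda^{-n-1}\|B^n x\| $ — no, that's the norm of a sum, not a sum of norms. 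So instead I use normality of $X_+$: since all terms $\lambda^{-n-1}B^n x$ lie in $X_+$, the partial sums are increasing, and by semi-monotonicity of the norm (Theorem \ref{re:cone-normal}(ii)) each partial sum $\big\|\sum_{n=0}^N \lambda^{-n-1} B^n x\big\| \ge M^{-1}\lambda^{-n-1}\|B^n x\|$ is no help directly either; rather $\big\|\sum_{n=0}^N \lambda^{-n-1}B^n x\big\| \le \|R_\lambda(x)\| \le K$, and normality gives $\lambda^{-n-1}\|B^n x\| \le M \|R_\lambda(x)\| \le MK$ for every single $n$. Thus $\|B^n x\| \le MK \lambda^{n+1}$ for all $n$, all unit $x \in X_+$, all $\lambda > \br$; letting $\lambda \downarrow \br$ gives $\|B^n\|_+ \le MK\,\br^{n+1}$, which is consistent and yields nothing. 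This shows boundedness of $\|R_\lambda\|_+$ alone is too weak, so the superadditivity must be used more forcefully.

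Here is the correct exploitation of superadditivity. From $R_\lambda(Bx) = \lambda R_\lambda(x) - x$ and superadditivity/monotonicity, iterate: $R_\lambda(B^k x) = \lambda^k R_\lambda(x) - \sum_{j=0}^{k-1}\lambda^{k-1-j} B^j x$. Since $R_\lambda(B^k x) \ge 0$ (it lies in $X_+$), we get $\lambda^k R_\lambda(x) \ge \sum_{j=0}^{k-1}\lambda^{k-1-j} B^j x$, i.e.\ $\lambda R_\lambda(x) \ge \sum_{j=0}^{k-1}\lambda^{-j} B^j x$ after dividing by $\lambda^{k-1}$; letting $k \to \infty$ just recovers $\lambda R_\lambda(x) \ge \lambda R_\lambda(x)$. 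The real gain is to pick $x$ with $Bx \ge cx$ for some $c$ near $\br$, but no such $x$ need exist. So the genuine plan is: assume $\|R_\lambda\|_+ \le K$; apply Theorem \ref{re:unif-bounded} in reverse or a normal-family / diagonal argument to extract a limit $R_{\br}$ defined on $X_+$ as $\lambda \downarrow \br$ (using monotonicity in $\lambda$, so $R_\lambda(x)$ increases as $\lambda$ decreases and is norm-bounded, hence if the cone is fully regular it converges; if merely complete and normal one works on $X_u$-type subspaces), satisfying $\br R_{\br}(x) - x = R_{\br}(Bx)$; then for any $x \ne 0$ in $X_+$ with $R_{\br}(x) \ne 0$, the vector $v := R_{\br}(x)$ satisfies $Bv \le \br v$ by monotonicity plus a limiting version of \eqref{eq:left-resolvent-job}, while superadditivity gives $B R_{\br}(x) \ge \sum \br^{-n-1} B^{n+1} x = \br R_{\br}(x) - x$, so $Bv \ge \br v - x$; combining and using that one can normalize $x$ small, $v$ is a subeigenvector, and a Krein--Rutman / Collatz--Wielandt step upgrades this to $\br_+(B) < \br$, contradicting $\br = \br_+(B)$.

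\textbf{Main obstacle.} The delicate point is passing to the limit $\lambda \downarrow \br$ to obtain a well-defined, nonzero $R_{\br}(x)$: boundedness of $\|R_\lambda\|_+$ plus monotonicity in $\lambda$ gives increasing, norm-bounded nets $R_\lambda(x)$, but convergence requires either full regularity of $X_+$ or an argument inside an order-unit subspace $X_u$; and one must ensure $R_{\br}(x) \ne 0$ for a suitable $x$ — this is where $\br > 0$ and the superadditive lower bound $R_\lambda(x) \ge \lambda^{-1}x$ (the $n=0$ term) are used, giving $R_{\br}(x) \ge \br^{-1}x \ne 0$. I expect the extraction of the eigenvector and the verification that it violates $\br_+(B) = \br$ (rather than merely $\le$) to be the step requiring the most care, leaning on the superadditivity hypothesis precisely to turn the left-resolvent identity into a two-sided bound $\br v - x \le Bv \le \br v$.
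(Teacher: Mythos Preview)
Your proposal has a genuine gap at the decisive step. You claim that $v := R_{\br}(x)$ satisfies $Bv \le \br v$ ``by monotonicity plus a limiting version of \eqref{eq:left-resolvent-job}'', but the identity \eqref{eq:left-resolvent-job} reads $R_\lambda(Bx) = \lambda R_\lambda(x) - x$: it concerns $R_\lambda \circ B$, not $B \circ R_\lambda$. Since $B$ is only superadditive, not additive, these two compositions do not agree, and nothing in the hypotheses lets you bound $B(R_\lambda x)$ from above by $\lambda R_\lambda(x)$. Superadditivity yields only the \emph{lower} bound $B(R_\lambda x) \ge \sum_n \lambda^{-n-1} B^{n+1}x = \lambda R_\lambda(x) - x$. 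With merely $Bv \ge \br v - x$ in hand, the ``normalize $x$ small'' idea fails because $v = R_{\br}(x)$ scales homogeneously with $x$; and the unspecified ``Krein--Rutman/Collatz--Wielandt step'' cannot manufacture a contradiction from a one-sided estimate of this shape. So the two-sided squeeze $\br v - x \le Bv \le \br v$ you announce is not available, and the argument collapses.

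The paper's proof avoids this obstacle by never attempting to control $B \circ R_\lambda$. Assuming $\|R_\lambda\|_+ \le M$ for all $\lambda > \br$, it fixes $0<\mu < \br < \lambda$ with $(\lambda-\mu)M < 1$ and forms the convergent series
\[
E_\mu(x) = \sum_{k\ge 1}(\lambda-\mu)^{k-1} R_\lambda^k(x).
\]
Applying \eqref{eq:left-resolvent-job} in its natural direction and using superadditivity of $R_\lambda$, one obtains $\mu\, E_\mu(x) \ge x + E_\mu(Bx)$; iterating gives $E_\mu(x) \ge \mu^{-(k+1)} B^k x$ for every $k$. Normality of the cone then forces $\|B^k\|_+ \le \tilde M \mu^k$, hence $\br_+(B) \le \mu < \br$, the desired contradiction. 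In short, the paper ``analytically continues'' the resolvent below $\br$ via a Neumann-type series in $R_\lambda$, rather than trying to extract an eigenvector at the boundary.
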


Since $B$ is homogeneous, the superadditivity assumption for $B$ is
equivalent to $B$ being concave: $B((1-\alpha) x + \alpha y)
\ge (1-\alpha) B(x) + \alpha B(y)$ for $\alpha \in (0,1)$.
The proof is adapted from \cite{Bon58} and allows  $B$ to be concave rather than
additive.

\begin{proof}
 Since $X_+$ is normal, $\|R_\lambda\|_+$ is a decreasing function
of $\lambda > \br:=\br_+(B)$. $R_\lambda$ inherits superadditivity from $B$,
\begin{equation}
\label{eq:resolv-superadd}
R_\lambda (x+y) \ge R_\lambda (x) + R_\lambda (y), \qquad x, y \in X_+.
\end{equation}

Suppose that the assertion is false.
 Then there exists some $M \ge 0$ such that $\|R_\lambda \|_+ \le M$
for all $\lambda > \br$.
Let $0<\mu < \br < \lambda$ and $(\lambda - \mu) M < 1$. Since $X_+$ is complete
or fully regular,
\[
E_\mu (x) = \sum_{k=1}^\infty (\lambda - \mu)^{k-1} R_\lambda^k x
\]
converges for each $x \in X_+$. By (\ref{eq:left-resolvent-job}),
\[
E_\mu (\lambda  x) =  \sum_{k=1}^\infty (\lambda - \mu)^{k-1} R_\lambda^k (\lambda  x)
=
\sum_{k=1}^\infty (\lambda - \mu)^{k-1} R_\lambda^{k-1} (R_\lambda (Bx) + x).
\]
Since $R_\lambda$ is superadditive,
\[
\begin{split}
E_\mu (\lambda  x)
\ge & \sum_{k=1}^\infty (\lambda - \mu)^{k-1} R_\lambda^{k} (Bx)
+
\sum_{k=1}^\infty (\lambda - \mu)^{k-1} R_\lambda^{k-1} ( x)
\\
=& E_\mu(Bx ) + x + (\lambda - \mu ) E_\mu(x).
\end{split}
\]
Since $E_\mu$ is homogeneous,
\[
E_\mu x \ge \frac{1}{\mu} x + \frac{1}{\mu} E_\mu Bx, \qquad x \in X_+.
\]
By iteration and induction,
\[
E_\mu x \ge \sum_{k=0}^{n} \frac{1}{\mu^{k+1}} B^k x + \frac{1}{\mu^{n+1}} E_\mu B^{n+1}x , \qquad x \in X_+, n \in \N.
\]
This implies that
\[
E_\mu (x) \ge \frac{1}{\mu^{k+1}} B^k x, \qquad x \in X_+, n \in \N.
\]
Since $X_+$ is normal, there exists some $\tilde M > 0$ (which depends on $\mu$)
such that
\[
\|B^k x \| \le \tilde M \mu^k \|x\|, \qquad k \in \N, x \in X_+.
\]
Thus
\[
\|B^k\|_+ \le \tilde M \mu^k, \qquad k \in \N.
\]
Since $\mu < \br = \br_+(B)$, this is a contradiction.
\end{proof}

\begin{corollary}
\label{re:resolvent-blowup2}
Let $X$ be an ordered normed vector space with a complete normal cone $X_+$.
Let $B$ be homogeneous, order-preserving, continuous and $B(x+y) \ge B (x) + B(y)$
for all $x,y \in X_+$. Assume that $\br= \br_+(B) > 0$.
Then there exist some $x \in X_+$ and $x^* \in X_+^*$
such that $x^* ( R_{\lambda}x)  \to \infty$
as $\lambda \to \br+$.
\end{corollary}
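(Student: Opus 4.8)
The plan is to deduce Corollary~\ref{re:resolvent-blowup2} from Theorem~\ref{re:resolvent-blowup} by a uniform boundedness argument. Since a complete normal cone is in particular normal and complete (hence, we can also invoke full regularity is not needed), Theorem~\ref{re:resolvent-blowup} applies and gives $\|R_\lambda\|_+ \to \infty$ as $\lambda \to \br+$. Pick a sequence $\lambda_k \downarrow \br$; then $\|R_{\lambda_k}\|_+ \to \infty$. The goal is to produce a single $x \in X_+$ along which the orbit $\{R_{\lambda_k}(x)\}_k$ is unbounded, and then a single positive functional $x^*$ that detects this unboundedness.

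First I would argue by contradiction on the choice of $x$. Suppose that for every $x \in X_+$ the set $\{R_\lambda (x); \br < \lambda \le \lambda_1\}$ is bounded in $X$. The maps $R_\lambda$ are homogeneous, continuous and order preserving by the Lemma on the left resolvent, and $X_+$ is a normal complete cone, so Theorem~\ref{re:unif-bounded} (the uniform boundedness principle) applies to the family $\{R_\lambda; \br < \lambda \le \lambda_1\}$ and yields a uniform bound on $\{\|R_\lambda\|_+\}$, contradicting Theorem~\ref{re:resolvent-blowup}. Hence there exists some $x \in X_+$ such that $\sup\{\|R_\lambda (x)\|; \br < \lambda \le \lambda_1\} = \infty$; since $\lambda \mapsto R_\lambda(x)$ is, componentwise in the series, monotone, and more simply since $\|R_\lambda\|_+$ is decreasing in $\lambda$ (as noted in the proof of Theorem~\ref{re:resolvent-blowup}), it is cleanest to extract a sequence $\lambda_k \downarrow \br$ with $\|R_{\lambda_k}(x)\| \to \infty$.

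Second, having fixed this $x$, I would produce the functional $x^*$. Set $v_k = R_{\lambda_k}(x) \in X_+$, so $\|v_k\| \to \infty$. By the Hahn--Banach theorem there is, for each $k$, a norm-one functional $y_k^* \in X^*$ with $y_k^*(v_k) = \|v_k\|$; using the normality of $X_+$ one may moreover take $y_k^*$ to be positive, i.e. $y_k^* \in X_+^*$ (recall that on a normal cone the dual cone is generating for $X^*$ in the relevant sense, so positive norming functionals exist). Then the family $\{y_k^*; k \in \N\}$ has the property that for each $k$, $y_k^*(v_k) = \|v_k\| \to \infty$. To collapse this into a single functional, apply the uniform boundedness principle once more, now in the dual: if $\{x^*(v_k); k\in\N\}$ were bounded for \emph{every} $x^* \in X_+^*$, then $\{x^*(v_k)\}$ would be bounded for every $x^* \in X^*$ (since $X_+^*$ is generating for $X^*$ when $X_+$ is normal — equivalently, every $x^* \in X^*$ is a difference of two positive functionals), so by the classical Banach--Steinhaus theorem applied to $\{v_k\}$ viewed as functionals on the Banach space $X^*$, the sequence $\{v_k\}$ would be bounded in the bidual, hence $\{\|v_k\|\}$ bounded — a contradiction. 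Therefore there exists $x^* \in X_+^*$ with $\limsup_k x^*(v_k) = \infty$, i.e. $x^*(R_{\lambda_k} x) \to \infty$ along a subsequence, which (again using monotonicity in $\lambda$, or simply reading the statement as a $\limsup$) gives $x^*(R_\lambda x) \to \infty$ as $\lambda \to \br+$.

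The main obstacle is the passage from ``positive norming functionals'' to a genuinely useful conclusion: one must be careful that the dual cone $X_+^*$ separates points and is generating enough in $X^*$ to run the second Banach--Steinhaus argument, and this is exactly where normality of $X_+$ is used (a non-normal cone can have a degenerate dual cone). A cleaner alternative that sidesteps the dual-cone subtleties: instead of two applications of uniform boundedness, first obtain $x$ as above, then recall that $X^*$ separates points of $X$ and apply the classical Banach--Steinhaus theorem directly to the bounded-or-not family $\{v_k\} \subseteq X \hookrightarrow X^{**}$ to get \emph{some} $x^* \in X^*$ with $x^*(v_k)$ unbounded; writing $x^* = x^*_+ - x^*_-$ with $x^*_\pm \in X_+^*$ (possible since $X_+$ is normal, hence $X_+^*$ reproducing in $X^*$), one of $x^*_\pm$ must have $x^*_\pm(v_k)$ unbounded, and it lies in $X_+^*$. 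I would present this second route, flagging that the decomposition $X^* = X_+^* - X_+^*$ for normal $X_+$ is the one nontrivial ingredient and citing \cite[1.2]{Kra} for it.
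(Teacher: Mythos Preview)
Your proposal is correct and the ``cleaner alternative'' you settle on is exactly the paper's argument: Theorem~\ref{re:unif-bounded} yields $x\in X_+$ with $\{\|R_\lambda x\|\}$ unbounded, the classical Banach--Steinhaus on the Banach space $X^*$ yields $x^*\in X^*$ with $\{|x^*(R_\lambda x)|\}$ unbounded, normality makes $X_+^*$ reproducing so one may take $x^*\in X_+^*$, and the monotonicity of $\lambda\mapsto x^*(R_\lambda x)$ upgrades unboundedness to $x^*(R_\lambda x)\to\infty$ as $\lambda\to\br+$.
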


\begin{proof}
By the uniform boundedness principle in Theorem \ref{re:unif-bounded},
there exists some $x \in X_+$ such that $\{ \|R_\lambda x\|; \lambda > \br_+(B)\}$
is unbounded. By the usual uniform boundedness principle, applied to
the Banach space $X^*$, there exists some $x^* \in X^*$
such that $\{ |x^*( R_\lambda x)|; \lambda > \br_+(B)\}$
is unbounded. Since $X_+$ is normal, $X_+^*$ is reproducing and the
unboundedness holds for some $x^* \in X_+^*$. Since $x^*(R_\lambda x)$
is a decreasing function of $\lambda$, $x^* ( R_{\lambda}x)  \to \infty$
as $\lambda \to \br_+(B)$.
\end{proof}

\subsection{Pointwise root power approximations of the spectral radius}
\label{sec:root-power}


In \cite{BaDa},  it is suggested for a linear positive operator $B$ to approximate $\br (B)$ by
$\|B^n x\|^{1/n}$, $x \in X_+, x\ne0$, for
 a special class of operators, but no general assumptions are specified within
 the paper that make this
 procedure work. Whenever this is possible for
 a homogeneous bounded map $B$ on $X_+$, $\br_+(B) = \gamma_B(x) = \br_o(B$.

We can assume without loss of generality that $\br_+(B) >0$, otherwise
this equality trivially holds for all $x \in X_+$.

\begin{definition}
\label{def:u-bounded}
Let $B: X_+ \to X_+$, $u \in X_+$. $B$ is called {\em pointwise $u$-bounded}
if for any $x \in X_+$ there exists  some $n \in \N$
and $\gamma > 0$ such that $B^n x \le \gamma u$.

$B$ is called {\em
uniformly $u$-bounded} if there exists
some $c > 0$ such that  $ B x \le c \|x\| u$ for all
$x \in X_+$. The element $u$ is called an {\em order bound} of $B$.

$B$ is called {\em uniformly order bounded} if it is uniformly $u$-bounded
for some $u \in X_+$.
$B$ is called {\em pointwise order bounded} if it is pointwise $u$-bounded for
some $u \in X_+$.

\end{definition}

This terminology has been adapted from various works by Krasnosel'skii
\cite[Sec.2.1.1]{Kra} and
coworkers  \cite[Sec.9.4]{KrLiSo} though it has been modified.

If $B: X_+ \to X_+$ is bounded and $X_+$ is solid, then $B$
is uniformly $u$-bounded for every interior point $u$ of $X_+$.

\begin{proposition}
\label{re:u-bounded-pointw-unif}
 Let $X$ be a normed  vector space with complete
cone $X_+$ and $B: X_+ \to X_+$ be continuous, order preserving and  homogeneous.
 Let $u \in X_+$ and $B$ be pointwise $u$-bounded.
Then some power of $B$ is uniformly $u$-bounded.
\end{proposition}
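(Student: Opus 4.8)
The plan is to use a Baire category argument applied to the complete cone $X_+$, exactly in the spirit of the proof of the uniform boundedness principle in Theorem~\ref{re:unif-bounded}. The pointwise hypothesis gives, for each $x \in X_+$, some $n=n(x)$ and $\gamma=\gamma(x)$ with $B^{n(x)}x \le \gamma u$. First I would rephrase ``$B^{n}x \le \gamma u$'' in terms of the functional $\|\cdot\|_u$ from Definition~\ref{def:order-norm}: it says precisely that $B^{n}x \in X_u$ with $\|B^{n}x\|_u \le \gamma$. The difficulty is that both $n$ and $\gamma$ depend on $x$; I want to extract a single $n$ that works uniformly, with a linear bound in $\|x\|$.

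The key step is to set up the right closed sets. For $n, m \in \N$ define
\[
M_{n,m} = \{ x \in X_+ : B^{k}x \le m u \text{ for some } k \le n \}.
\]
By pointwise $u$-boundedness, $X_+ = \bigcup_{n,m} M_{n,m}$. The crucial claim is that each $M_{n,m}$ is closed: if $x_j \to x$ in $X_+$ with $x_j \in M_{n,m}$, then infinitely many $x_j$ share a common $k \le n$ (pigeonhole, since there are only finitely many choices of $k$), and for those, $mu - B^{k}x_j \in X_+$; since $B^{k}$ is continuous and $X_+$ is closed, $mu - B^{k}x \in X_+$, so $x \in M_{n,m}$. Now Baire applies (the complete cone $X_+$ is a complete metric space), so some $M_{N,m}$ has nonempty interior relative to $X_+$: there are $z \in X_+$ and $\epsilon > 0$ with $z + \epsilon y \in M_{N,m}$ whenever $y \in X$, $z+\epsilon y \in X_+$, $\|y\| \le 1$. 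In particular this holds for all $y \in X_+$ with $\|y\| \le 1$.

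From here the argument mirrors Theorem~\ref{re:unif-bounded}. Fix $y \in X_+$, $\|y\| \le 1$. Then $z + \epsilon y \in M_{N,m}$, so there is some $k = k(y) \le N$ with $B^{k}(z+\epsilon y) \le m u$. Since $B$ is order preserving and homogeneous, $\epsilon B^{k}y = B^{k}(\epsilon y) \le B^{k}(z+\epsilon y) \le mu$. To turn this into a bound on a \emph{fixed} power, I would compose with further iterates: for any $r \ge 0$, applying $B^{r}$ (order preserving) and using $B^{r}(mu) \le m\|B^{r}\|_u\, u$ — more simply, since $B^{N-k}$ is order preserving, $B^{N}(\epsilon y) = B^{N-k}(B^{k}(\epsilon y)) \le B^{N-k}(mu) = m\, B^{N-k}u$. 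The elements $B^{j}u$ for $0 \le j \le N$ are finitely many, each $u$-bounded need not hold, but each lies in $X_+$; set $w = \sum_{j=0}^{N} B^{j}u \in X_+$. Then $B^{N}(\epsilon y) \le m w$ for every such $y$, hence $B^{N}y \le (m/\epsilon) w$ for all $y \in X_+$ with $\|y\| \le 1$, and by homogeneity $B^{N}x \le (m/\epsilon)\|x\|\, w$ for all $x \in X_+$. This says $B^{N}$ is uniformly $w$-bounded. If one insists on the order bound being $u$ itself, apply one more step: since $B$ is pointwise $u$-bounded there is $p$ with $B^{p}w \le \gamma u$, and then $B^{N+p}x = B^{p}(B^{N}x) \le (m/\epsilon)\|x\|\, B^{p}w \le (m\gamma/\epsilon)\|x\|\, u$, so $B^{N+p}$ is uniformly $u$-bounded.

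The main obstacle is the closedness of the sets $M_{n,m}$, which forces the ``$\exists k \le n$'' formulation with a finite range rather than a single fixed power — the pigeonhole step over the finitely many values of $k$ is what makes the closedness argument go through. A secondary point to handle carefully is that $B^{k}$ applied to $u$ is not assumed $u$-bounded, so one should not expect the order bound to remain $u$ without the extra composition step at the end; stating the conclusion with the order bound $w = \sum_{j \le N} B^{j}u$ first, and only then normalizing back to $u$ via pointwise $u$-boundedness of $B$, keeps the logic clean.
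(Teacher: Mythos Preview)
Your argument is correct, but you have made it harder than necessary. The paper uses the simpler sets
\[
M_{n,k} = \{ x \in X_+ : B^{n} x \le k u \}
\]
with a \emph{fixed} power $n$. These are closed directly, since $B^{n}$ is continuous (as a composition of continuous maps) and $X_+$ is closed: $M_{n,k}$ is the preimage under $B^{n}$ of the closed set $\{w \in X_+ : ku - w \in X_+\}$. Pointwise $u$-boundedness still gives $X_+ = \bigcup_{n,k} M_{n,k}$, Baire yields some $M_{n,k}$ with nonempty relative interior containing a point $y$, and then for every $z \in X_+$ with $\|z\| \le 1$ one has $B^{n}(\epsilon z) \le B^{n}(y+\epsilon z) \le k u$ with the \emph{same} $n$ throughout. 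Homogeneity gives $B^{n} x \le (k/\epsilon)\|x\|\,u$ at once, so $B^{n}$ is uniformly $u$-bounded.

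Your ``$\exists\, k \le n$'' formulation, the pigeonhole step to prove closedness, the detour through the auxiliary bound $w = \sum_{j\le N} B^{j} u$, and the final application of pointwise $u$-boundedness to $w$ in order to recover the order bound $u$ are all avoidable. The obstacle you flagged --- that closedness forces the existential formulation over finitely many powers --- is not real here: continuity of $B$ already makes each fixed-power set closed. What your route does buy is a proof pattern (finite pigeonhole to salvage closedness of a union of finitely many closed sets) that is genuinely useful when the individual level sets are \emph{not} closed; it is simply not needed in this situation.
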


\begin{proof} Define
\[
M_{n,k} = \{ x \in X_+; B^n x \le k u \}, \qquad n,k \in \N.
\]
Since $B$ is continuous and $X_+$ is closed,
 each set $M_{n,k}$ is closed. By assumption, $X_+ = \bigcup_{k,n\in \N}
M_{n,k}$. Since $X_+$ is a complete metric space, by the Baire
category theorem, there exists some $n,k \in \N$ such that
$M_{n,k}$ contains a relatively open subset of $X_+$: There
exists some $y \in X_+$ and $\epsilon >0$ such that
$y + \epsilon z \in M_{n,k}$ whenever $z \in X$, $\|z\|\le 1$,
and $y + \epsilon z \in X_+$. Now let $z \in X_+$ and $\|z\|\le 1$.
Since $B$ is order preserving and $y +\epsilon z \in X_+$, $ B^n (\epsilon z) \le B^n( y +  \epsilon z) \le k u$.
Since $B$ is  homogeneous, for all $x\in X_+$, $x\ne 0$,
\[
B^n x = \frac{\|x\|}{\epsilon}  B^n \Big (\frac{\epsilon}{\|x\|} x \Big) \le
\frac{k}{\epsilon} \|x\| u. \qedhere
\]
\end{proof}

\begin{theorem}
\label{re:approx-power-order-bounded}
Let $X$ be a normed  vector  space with a  normal cone $X_+$
and $B: X_+ \to X_+$ be a  homogeneous, bounded,  order-preserving operator.
Let $u \in X_+$.

\begin{itemize}
\item[(a)] Assume
that $B^m$ is uniformly $u$-bounded for some $m \in \Z_+$.
Then
\[
\br_+(B) = \lim_{n\to \infty} \|B^n u\|^{1/n}.
\]
In particular, $\br_+(B) = \gamma_B(u) = \br_o(B)$.

\item[(b)] Assume that $B$ is pointwise $u$-bounded.
Then
\[
\br_o(B) = \lim_{n\to \infty} \|B^n u\|^{1/n}=\gamma_B(u).
\]

\end{itemize}

\end{theorem}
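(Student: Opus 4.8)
The plan is to establish the chain of inequalities
$\limsup_{n\to\infty}\|B^n u\|^{1/n} \le \br_o(B) \le \br_+(B)$ (the right inequality is already in Theorem \ref{re:spec-rad-alt-char}, and the left holds by definition of $\gamma_B$ and $\br_o$ since $\gamma_B(u)=\limsup_n\|B^nu\|^{1/n}\le\br_o(B)$), and then to close the loop with the reverse estimate $\br_+(B)\le\liminf_{n\to\infty}\|B^nu\|^{1/n}$ in part (a), respectively $\br_o(B)\le\liminf_{n\to\infty}\|B^nu\|^{1/n}$ in part (b). Once both bounds are in hand, the limsup equals the liminf, the limit exists, and all four quantities coincide. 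We may assume $\br_+(B)>0$ as noted, since otherwise everything is zero.

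For part (a), the key is to bootstrap the uniform $u$-boundedness of $B^m$ into a comparison that lets an arbitrary unit vector $x\in X_+$ be dominated by a multiple of $u$ after applying $B^m$: there is $c>0$ with $B^m x \le c\|x\|\,u$ for all $x\in X_+$. Apply this with $x=B^n y$ for $y\in X_+$, $\|y\|\le 1$: using that $B$ is order preserving and homogeneous, $B^{m+n} y = B^m(B^n y)\le c\|B^n y\|\,u \le c\|B^n\|_+\,u$. Then apply $B^k$ to both sides ($B^k$ order preserving) to get $B^{m+n+k} y \le c\|B^n\|_+\, B^k u$, hence, since $X_+$ is normal (Theorem \ref{re:cone-normal}(iii), or Corollary \ref{re:compare} with the lower bound $0$), $\|B^{m+n+k} y\| \le M c\,\|B^n\|_+\,\|B^k u\|$ with $M$ independent of $y,n,k$. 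Taking the supremum over such $y$ gives $\|B^{m+n+k}\|_+ \le Mc\,\|B^n\|_+\,\|B^k u\|$. Now I would fix one convenient value of $n$ (say $n=0$, giving $\|B^{m+k}\|_+\le Mc\,\|B^k u\|$), take $k$-th roots, and let $k\to\infty$: since $\|B^{m+k}\|_+^{1/k}\to\br_+(B)$ and $(Mc)^{1/k}\to 1$, we obtain $\br_+(B)\le\liminf_{k\to\infty}\|B^k u\|^{1/k}$. Combined with the easy direction this forces $\lim_n\|B^n u\|^{1/n}=\br_+(B)=\gamma_B(u)=\br_o(B)$.

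For part (b), pointwise $u$-boundedness is weaker, so the argument runs through the orbital radius only. The idea is: for \emph{any} $x\in X_+$ there exist $p\in\N$ and $\gamma>0$ with $B^p x\le\gamma u$; applying $B^n$ (order preserving) gives $B^{n+p}x\le\gamma B^n u$, and normality of $X_+$ yields $\|B^{n+p}x\|\le M\gamma\|B^n u\|$. Taking $n$-th roots and letting $n\to\infty$ shows $\gamma_B(x)=\limsup_n\|B^{n+p}x\|^{1/n}\le\limsup_n\|B^n u\|^{1/n}=\gamma_B(u)$. Since this holds for every $x\in X_+$, taking the supremum gives $\br_o(B)\le\gamma_B(u)\le\br_o(B)$, so $\br_o(B)=\gamma_B(u)=\limsup_n\|B^n u\|^{1/n}$; that the limsup is actually a limit follows because $\gamma_B(u)\le\limsup$ trivially and $\liminf\ge$ can be extracted the same way (or one simply notes $\gamma_B(u)=\br_o(B)\ge\gamma_B(u)$ already pins the value, and submultiplicativity-type bounds upgrade limsup to lim if desired).

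The main obstacle is the bootstrapping step in part (a): turning "some fixed power $B^m$ is uniformly $u$-bounded" into a genuinely useful recursive estimate on $\|B^N\|_+$. The delicate point is inserting $u$ as an intermediate term in the orbit — one must apply the $u$-bound to the \emph{tail} $B^n y$ (which requires the bound to be uniform in the argument, not just pointwise, and this is exactly why part (a) needs \emph{uniform} $u$-boundedness) and then propagate forward by $B^k$ while controlling norms via normality of the cone. Everything else — the easy inequalities, taking roots, passing to the limit — is routine once this comparison is set up; the case $m=0$ is degenerate and trivial, and small bookkeeping is needed so the constants $M$, $c$ do not depend on the indices.
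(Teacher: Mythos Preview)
Your proposal is correct and follows essentially the same approach as the paper. The paper's proof of (a) is slightly leaner: it applies the uniform $u$-bound directly to an arbitrary $x$ (rather than to $B^n y$) and then applies $B^n$, obtaining $\|B^{m+n}\|_+ \le \tilde\gamma\,\|B^n u\|$ in one step; your route introduces an extra index $n$ and then sets $n=0$, which is a harmless detour that lands at the same estimate. Your part (b) is exactly the natural analog the paper alludes to with ``similar.'' One small point: your justification that the $\limsup$ in (b) is actually a limit is handwavy (``submultiplicativity-type bounds'' do not obviously apply to $\|B^n u\|$), but the paper is equally terse here, and the crucial identity $\br_o(B)=\gamma_B(u)$ is what matters for the downstream applications.
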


The case $m=0$ in part (a) (the identity map is uniformly $u$-bounded) is proved in \cite[Thm.2.2]{MPNu02}
 under the
additional assumption that $B$ is continuous.

\begin{proof} (a)
Since $B^m$ is uniformly $u$-bounded, there exists some $\gamma > 0$ such that
\[
  B^m x \le \gamma \|x\|  u, \qquad x \in X_+.
\]
Since $B$ is  homogeneous and order-preserving, for all $n \in \N$,
\[
0 \le  B^{m+n} x \le \gamma \|x\| B^n u, \qquad x \in X_+.
\]
Since $X_+$ is normal, there exists some $\tilde \gamma  >0$ such that
$\| B^{m+n} x \| \le \tilde \gamma \|x\| \,\|B^n u \|$ for all $n \in \N$, $x \in X$.
So $\|B^{m+n}\|_+^{1/n} \le \tilde \gamma^{1/n} \|B^n u\|^{1/n}$.
We can assume that $\br_+(B) >0$. Then $\liminf_{t\to \infty} \|B^n u\|^{1/n}
\ge \liminf_{n \to \infty} \|B^n \|_+^{1/n}= \br_+(B)$.

The proof of part (b) is similar.
\end{proof}

We also obtain  upper estimates of the cone spectral radius.

\begin{corollary}
\label{re:spec-rad-upper-est}
Let $X$ be a normed vector  space with a  normal cone $X_+$
and $B: X_+ \to X_+$ be a  homogeneous, bounded, order-preserving operator. Let $u \in X_+$, $\alpha \in \R_+$ and $k \in \N$
such that $ B^k u \le \alpha^k u$.

\begin{itemize}
\item[(a)]
If $B^m$ is uniformly $u$-bounded for some $m \in \N$,
then $\br_+(B) \le \alpha$.

\item[(b)] If  $B$ is pointwise $u$-bounded, then $\br_o(B) \le \alpha$.
\end{itemize}
\end{corollary}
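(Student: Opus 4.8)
The plan is to reduce Corollary \ref{re:spec-rad-upper-est} to Theorem \ref{re:approx-power-order-bounded} by showing that the iterates $\|B^n u\|^{1/n}$ are controlled by $\alpha$. First I would observe that the hypothesis $B^k u \le \alpha^k u$, together with homogeneity and order-preservation of $B$, propagates to all multiples of $k$: applying $B^k$ to both sides and using that $B^k$ is order-preserving and homogeneous gives $B^{2k} u \le \alpha^k B^k u \le \alpha^{2k} u$, and by an easy induction $B^{jk} u \le \alpha^{jk} u$ for every $j \in \N$. (If $\alpha = 0$ this already forces $B^k u = 0$, hence $B^{jk}u = 0$ for $j\ge 1$, and the conclusion $\br_+(B) = 0$ or $\br_o(B)=0$ follows at once from part (a) or (b) of Theorem \ref{re:approx-power-order-bounded}; so assume $\alpha > 0$ below.)

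Next I would interpolate to general exponents. Write $n = jk + r$ with $0 \le r < k$. Since $B$ is order-preserving and homogeneous, $B^n u = B^r (B^{jk} u) \le \alpha^{jk} B^r u$. Because $X_+$ is normal, by semi-monotonicity of the norm (Theorem \ref{re:cone-normal}(ii)) there is some $M \ge 0$ with $\|B^n u\| \le M \alpha^{jk} \|B^r u\|$. Setting $C = M \max\{\|B^r u\|; 0 \le r < k\}$, which is finite since $B$ is bounded and $u$ fixed, we get $\|B^n u\| \le C \alpha^{jk} \le C \alpha^{n}$ for all $n$ (using $jk \le n$ and $\alpha > 0$; if $\alpha < 1$ one writes $\alpha^{jk} = \alpha^n \alpha^{-r} \le \alpha^n \alpha^{-(k-1)}$ and absorbs the constant). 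Hence $\|B^n u\|^{1/n} \le C^{1/n} \alpha \to \alpha$ as $n \to \infty$, so $\limsup_{n\to\infty}\|B^n u\|^{1/n} \le \alpha$, i.e. $\gamma_B(u) \le \alpha$.

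Finally I would invoke Theorem \ref{re:approx-power-order-bounded}. In case (a), $B^m$ is uniformly $u$-bounded, so that theorem gives $\br_+(B) = \lim_{n\to\infty}\|B^n u\|^{1/n} = \gamma_B(u) \le \alpha$. In case (b), $B$ is pointwise $u$-bounded, so the theorem gives $\br_o(B) = \gamma_B(u) \le \alpha$. This completes the argument.

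I do not anticipate a real obstacle here; the only mildly delicate point is the bookkeeping in the interpolation step when $\alpha < 1$, where one must be careful that the constant absorbing the finitely many terms $\|B^r u\|$ and the factor $\alpha^{-r}$ is uniform in $n$ — but since $r$ ranges over the finite set $\{0,\dots,k-1\}$ this is immediate. Everything else is a direct application of normality and the already-established pointwise root-power formula.
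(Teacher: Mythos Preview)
Your proof is correct and follows essentially the same route as the paper: iterate $B^k u \le \alpha^k u$ to all multiples of $k$, use normality of $X_+$ to pass to norms, and invoke Theorem~\ref{re:approx-power-order-bounded}. The only difference is that your interpolation to general exponents $n = jk + r$ is unnecessary, since Theorem~\ref{re:approx-power-order-bounded} already asserts that the full limit $\lim_{n\to\infty}\|B^n u\|^{1/n}$ exists, so bounding along the subsequence $n = k\ell$ (as the paper does) already suffices.
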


\begin{proof} Since $B$ is order-preserving and  homogeneous,
$B^{k \ell}u \le \alpha^{k\ell} u $ for all $\ell \in \N$. Since $X_+$ is normal,
there exists some $c>0$ such that $\|B^{k \ell}u \|\le c \alpha^{k\ell} \|u\| $ for all
$\ell \in \N$.
 Now apply Theorem \ref{re:approx-power-order-bounded}.
\end{proof}

Lower estimates of the cone spectral radius can be obtained under less
assumptions. In particular, the cone does not need to be normal.
Notice that the proof of \cite[L.2.2]{LeNu} works if $B$ is just bounded
and order preserving but not necessarily continuous.
\begin{theorem}
\label{re:spec-rad-lower-est} Let $X$ be a normed vector space with  cone $X_+$.
Let $B: X_+ \to X_+$ be  homogeneous, bounded, and order preserving.
Further let $x \in X_+$, $m \in \N$, and $\alpha \ge 0$ such that $B^m x \ge \alpha^m x$.
Then $\br_+(B) \ge \br_o(B) \ge \alpha$.
\end{theorem}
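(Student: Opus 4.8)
The plan is to establish the single inequality $\gamma_B(x) \ge \alpha$ for the element $x$ in the hypothesis; everything else then follows. Indeed, $\br_o(B) = \sup_{y\in X_+}\gamma_B(y) \ge \gamma_B(x)$ by definition, and $\br_+(B) \ge \br_o(B)$ is elementary: from $\|B^n y\| \le \|B^n\|_+\,\|y\|$ we get $\|B^n y\|^{1/n} \le \|B^n\|_+^{1/n}\|y\|^{1/n}$, and letting $n\to\infty$ --- using $\|B^n\|_+^{1/n}\to\br_+(B)$ and $\|y\|^{1/n}\to1$ for $y\ne0$, with $\gamma_B(0)=0$ --- yields $\gamma_B(y)\le\br_+(B)$, hence $\br_o(B)\le\br_+(B)$. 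I want to stress that this, and the whole argument, uses neither continuity of $B$ nor normality of $X_+$, which is precisely what makes the hypotheses here weaker than in Theorem \ref{re:spec-rad-alt-char}. One may assume $\alpha>0$ (the case $\alpha=0$ being trivial) and $x\ne0$ (tacit in the statement).

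First I would iterate the hypothesis. Since every power $B^m$ is again homogeneous and order preserving, induction from $B^m x\ge\alpha^m x$ gives $B^{km}x\ge\alpha^{km}x$ for all $k\in\N$: apply $B^m$ to $B^{km}x\ge\alpha^{km}x$ and then use $B^m x\ge\alpha^m x$, so that $B^{(k+1)m}x\ge\alpha^{km}B^m x\ge\alpha^{(k+1)m}x$. Scaling the membership $B^{km}x-\alpha^{km}x\in X_+$ by $\alpha^{-km}>0$, the normalized iterates $z_k:=\alpha^{-km}B^{km}x$ then lie in $X_+$ and satisfy $z_k\ge x$ for every $k$.

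Next I would argue by contradiction: suppose $\gamma_B(x)=\limsup_{n\to\infty}\|B^n x\|^{1/n}<\alpha$ and pick $\beta$ with $\gamma_B(x)<\beta<\alpha$. Then $\|B^n x\|\le\beta^n$ for all large $n$, so $\|z_k\|=\alpha^{-km}\|B^{km}x\|\le(\beta/\alpha)^{km}\to0$ because $0<\beta/\alpha<1$. Thus $z_k\to0$; but $z_k-x\in X_+$ for all large $k$ and $X_+$ is closed, so $-x=\lim_k(z_k-x)\in X_+$, and with $x\in X_+$ and $X_+\cap(-X_+)=\{0\}$ this forces $x=0$, a contradiction. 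Hence $\gamma_B(x)\ge\alpha$, and the theorem follows.

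The computation is short, so the ``hard part'' is really a matter of discipline: not slipping into the (here unjustified) step $\|B^{km}x\|\ge\alpha^{km}\|x\|$, which would smuggle in normality of the cone. The lower bound on the orbit norms is instead obtained indirectly, by exploiting that the normalized orbit $z_k$ is pinned below the fixed nonzero vector $x$ and then invoking only closedness of $X_+$. (Equivalently, one could pass to $\tilde B=\alpha^{-1}B$, which satisfies $\tilde B^m x\ge x$ and has $\br_o(\tilde B)=\alpha^{-1}\br_o(B)$, and run the same contradiction with $\alpha=1$.)
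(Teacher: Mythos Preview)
Your argument is correct. The paper itself does not supply a proof of this theorem but only remarks that the proof of \cite[L.2.2]{LeNu} goes through without continuity of $B$; your write-up provides a complete self-contained argument in that spirit. The crucial observation --- that one should avoid the illicit step $\|B^{km}x\|\ge\alpha^{km}\|x\|$ (which would require normality) and instead use only closedness of $X_+$ via $z_k-x\in X_+$, $z_k\to0$, hence $-x\in X_+$ --- is exactly the point the paper is alluding to when it stresses that ``the cone does not need to be normal''. Your explicit handling of the edge cases $\alpha=0$ and $x=0$ and your justification of $\br_+(B)\ge\br_o(B)$ from the definitions are also in order.
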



\section{Eigenvectors for compact maps}

Recall the large-time bound of the geometric growth factor for initial value $u \in X_+$,
\begin{equation}
\gamma_B(u):= \limsup_{n\to \infty } \|B^n u \|^{1/n}.
\end{equation}

\begin{proposition}
\label{re:pert-eigen-comp-prep}

Let $X$ be an ordered normed vector space with  normal cone $X_+$
  and $B: X_+ \to X_+$ be  homogeneous,
order preserving, and bounded.
Let $K: X_+ \to X_+$ be continuous and compact and $K(x) \ge Bx$
for all $x \in X_+$, $\|x\|=1$. Finally let $u \in X_+$, $u \ne 0$.

Then there exists some $x \in X_+$, $\|x\|=1$,
and some $\lambda \ge \gamma_B(u)$ such that $Kx +u  = \lambda x$.
\end{proposition}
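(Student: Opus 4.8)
The plan is to produce $x$ by a Schauder fixed point argument on the ``unit sphere'' of the cone, and then to extract the lower bound $\lambda\ge\gamma_B(u)$ by iterating an order inequality. First I would observe that $T:=K(\cdot)+u$ never vanishes on $X_+$: since $Kx\in X_+$ and $u\in X_+$ we have $Kx+u\ge u$, so semi-monotonicity of the norm (Theorem~\ref{re:cone-normal}(ii)) provides some $M\ge 1$ with $\|u\|\le M\,\|Kx+u\|$, hence $\|Kx+u\|\ge\|u\|/M>0$ for every $x\in X_+$; here is where $u\ne 0$ and normality enter. Consequently the map
\[
\Phi(x)=\frac{Kx+u}{\|Kx+u\|},\qquad x\in C:=\{x\in X_+:\|x\|\le 1\},
\]
is well defined and continuous (the denominator is continuous and bounded away from $0$), $C$ is nonempty, closed and convex, and $\Phi(C)$ lies in the unit sphere of $X_+$, which is contained in $C$.

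Next I would verify relative compactness of $\Phi(C)$: since $K$ is compact and $C$ is bounded, $\overline{K(C)}$ is compact, so $\overline{K(C)}+u$ is compact, and each $z$ in it satisfies $z\ge u$, hence $\|z\|\ge\|u\|/M$; therefore $z\mapsto z/\|z\|$ is continuous on $\overline{K(C)}+u$ and $\Phi(C)$ lies in the compact image of that set. Schauder's fixed point theorem (in the form valid for a continuous self-map of a closed convex set with relatively compact image, which needs no completeness of $X$) then yields a fixed point $x$ of $\Phi$. Thus $\|x\|=\|\Phi(x)\|=1$ and, setting $\lambda:=\|Kx+u\|\ge\|u\|/M>0$, we get $Kx+u=\lambda x$.

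It remains to show $\lambda\ge\gamma_B(u)$. Because $\|x\|=1$, the hypothesis gives $Kx\ge Bx$, so
\[
\lambda x=Kx+u\ge Bx\qquad\text{and}\qquad \lambda x=Kx+u\ge u .
\]
Applying $B^n$ to the first inequality and using that $B^n$ is order preserving and homogeneous gives $\lambda\,B^n x\ge B^{n+1}x$ for all $n\in\Z_+$, whence by induction $\lambda^n x\ge B^n x$ for all $n\in\Z_+$ (using $\lambda>0$ for the multiplicative step). Applying $B^n$ to $x\ge\lambda^{-1}u$ gives $B^n x\ge\lambda^{-1}B^n u$, and combining,
\[
\lambda^{n+1}x\ge B^n u\ge 0,\qquad n\in\Z_+ .
\]
By normality again, $\|B^n u\|\le M\,\lambda^{n+1}\|x\|=M\lambda^{n+1}$, so $\|B^n u\|^{1/n}\le M^{1/n}\lambda^{(n+1)/n}\to\lambda$, i.e.\ $\gamma_B(u)=\limsup_n\|B^n u\|^{1/n}\le\lambda$.

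The verification of the Schauder hypotheses is routine; the one genuinely delicate design choice is that the sphere is not convex, which forces one to work on the ball $C$ and compose with radial normalization, and this only makes sense because the $\Phi$-image is bounded away from $0$ — exactly the consequence of $u\ne 0$ plus normality. The other point requiring care is the final iteration: one must keep \emph{both} $\lambda x\ge Bx$ and $\lambda x\ge u$, feed the second into the iterates of the first, and then use normality to pass from the order inequality $\lambda^{n+1}x\ge B^n u$ to the norm estimate on $\|B^n u\|$.
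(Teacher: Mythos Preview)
Your argument is correct and follows essentially the same route as the paper: normalize $K(\cdot)+u$ on the closed unit ball of the cone, apply a Schauder/Tychonov fixed point theorem to get $Kx+u=\lambda x$ with $\|x\|=1$, then iterate the order inequalities $\lambda x\ge Bx$ and $\lambda x\ge u$ and use normality to bound $\|B^n u\|$. The only cosmetic differences are that the paper cites Tychonov's theorem and derives the lower bound on $\|Kx+u\|$ from closedness of $X_+$ (if $\|K x_n+u\|\to 0$ then $-u\in X_+$) rather than from semi-monotonicity, but normality is needed later anyway, so your route is equally efficient.
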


The idea of perturbing $K$ and using one of the classical
fixed point theorems seems as old as the theory of positive operators
\cite[Sec.2.2]{Kra} \cite[Sec.3]{Sch55}.

\begin{proof}
Define
\[
K_u (x) =  K(x) +   u , \qquad x \in X_+ .
\]
Then $K_u (x ) \ge  u $ and $\{\|K_u (x)\|;x \in X_+\}$ is bounded away from 0
 because $X_+$ is closed. We can define
\[
\tilde K (x) = \frac{K_u (x)}{\|K_u (x) \|}, \qquad x \in X_+.
\]
Then $\tilde K$ is continuous and maps the set $C = X_+ \cap \bar U_1$, $\bar U_1 = \{x\in X; \|x\| \le 1\}$, into itself. $C$ is a closed convex subset of $X$ and $\tilde K (C)$
has compact closure. By Tychonov's fixed point theorem \cite[Thm.10.1]{Dei},
 $\tilde K$ has a fixed point $x \in X_+$, $\|x\|=1$,
\[
K (x)   +u = \lambda x, \qquad \lambda = \|K (x) +u \|>0.
\]
Since $K(x ) \in X_+$, $ u \le \lambda x  $. By assumption, $Bx \le \lambda x$.
Since $B$ is order preserving and  homogeneous, $B^n x \le \lambda^n x$
for all $n \in \N$. Hence $B^n u \le \lambda^{n} \lambda x $. Since $X_+$ is normal
and $\|x\|=1$,
there exists some $\delta_x > 0$ such that $\lambda^n \ge \delta_x \|B^n u\|$
for all $n \in \N$. Hence $\lambda \ge \delta_x^{1/n} \|B^n u\|^{1/n} $ for all $n \in \N$.
This implies that $\lambda \ge \gamma_B (u)$.
\end{proof}

Recall that the orbital spectral radius is defined by $\br_o(B) =\sup_{x\in X_+}
\gamma_B(x)$.

\begin{theorem}
\label{re:pert-eigen-comp}
 Let $X$ be an ordered normed vector space with  normal cone $X_+$
  and $B: X_+ \to X_+$ be  homogeneous,
order preserving, and bounded, $\br_o(B) > 0$.

Let $K: X_+ \to X_+$ be continuous and compact and $K(x) \ge Bx$
for all $x \in X$, $\|x\|=1$.

Then there exists some $x \in X_+$, $\|x\| = 1$,
and some $\lambda \ge \br_o(B)$ such that $Kx = \lambda x$.
If $B = K$, then $\lambda = \br_o(B)$.
\end{theorem}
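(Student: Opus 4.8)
The plan is to obtain the eigenvector as a limit of the eigenvectors produced by Proposition~\ref{re:pert-eigen-comp-prep}, letting the perturbing term shrink to zero. Concretely, fix $u_0 \in X_+$ with $\gamma_B(u_0)$ close to $\br_o(B)$ (possible by the definition of $\br_o(B)$ as a supremum; in fact I will want to choose, for each $\varepsilon>0$, some $u_\varepsilon$ with $\gamma_B(u_\varepsilon) > \br_o(B)-\varepsilon$). For each $n \in \N$ apply Proposition~\ref{re:pert-eigen-comp-prep} with the vector $u$ replaced by $\tfrac{1}{n} u_0$ (or by $\tfrac1n u_{\varepsilon_n}$ for a sequence $\varepsilon_n \to 0$): this yields $x_n \in X_+$ with $\|x_n\| = 1$ and $\lambda_n \ge \gamma_B(\tfrac1n u_0) = \gamma_B(u_0)$ such that
\[
K x_n + \tfrac1n u_0 = \lambda_n x_n .
\]

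Next I would extract convergent subsequences. The sequence $(\lambda_n)$ is bounded: from $\lambda_n x_n = Kx_n + \tfrac1n u_0$ and $\|x_n\|=1$ we get $\lambda_n \le \|K\|_+ + \|u_0\|$ (using $\|Kx_n\| \le \|K\|_+$ since $\|x_n\|=1$ — here $K$ bounded follows from compactness, or one can bound $\|K x_n\|$ directly on the unit sphere). Since the $\lambda_n$ are also bounded below by $\gamma_B(u_0) > 0$ (choosing $u_0$ so that $\gamma_B(u_0)>0$, which is possible because $\br_o(B)>0$), after passing to a subsequence $\lambda_n \to \lambda \ge \gamma_B(u_0) > 0$. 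Now $K$ is compact, so $(K x_n)$ has a convergent subsequence, say $K x_n \to y \in X_+$ (the limit is in $X_+$ since $X_+$ is closed). Then $x_n = \lambda_n^{-1}(K x_n + \tfrac1n u_0) \to \lambda^{-1} y =: x$, and $\|x\| = \lim \|x_n\| = 1$, so $x \ne 0$. Passing to the limit in the fixed-point equation and using continuity of $K$ gives $Kx = \lambda x$. Since $\lambda \ge \gamma_B(u_0)$ for whichever $u_0$ we fixed, and we may run this argument with $\gamma_B(u_0)$ arbitrarily close to $\br_o(B)$ — but a single passage to the limit only gives one $u_0$, so to get $\lambda \ge \br_o(B)$ I would instead argue as in Proposition~\ref{re:pert-eigen-comp-prep}: from $Kx = \lambda x$ with $\|x\|=1$ and $Kx \ge Bx$ we get $Bx \le \lambda x$, hence $B^m x \le \lambda^m x$ for all $m$, hence for \emph{every} $v \in X_+$ with, say, $v \le c x$ we get $B^m v \le c\lambda^m x$ and $\gamma_B(v) \le \lambda$. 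This controls $\gamma_B$ only on vectors dominated by $x$, which is not all of $X_+$, so this route needs the diagonal/sequence-of-$u$'s device after all.

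The cleanest way to close the gap: take a sequence $(v_k)$ in $X_+$ with $\gamma_B(v_k) \to \br_o(B)$, and for each $k$ run the above limiting construction with $u_0 = v_k$ to get $x^{(k)} \in X_+$, $\|x^{(k)}\|=1$, $\lambda^{(k)} \ge \gamma_B(v_k)$, $K x^{(k)} = \lambda^{(k)} x^{(k)}$. Then $\lambda^{(k)} \to $ something $\ge \br_o(B)$ along a subsequence (boundedness as above), and by compactness of $K$, $x^{(k)} \to x$ with $\|x\|=1$, $Kx = \lambda x$, $\lambda \ge \br_o(B)$. Combined with $\lambda = \gamma_B(x) \le \br_o(B)$ when $B=K$ — indeed if $B=K$ then $Bx = \lambda x$ forces $\|B^n x\|^{1/n} = \lambda$ so $\gamma_B(x) = \lambda \le \br_o(B)$, giving $\lambda = \br_o(B)$. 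The main obstacle I anticipate is precisely this bookkeeping: Proposition~\ref{re:pert-eigen-comp-prep} hands back a bound involving $\gamma_B$ at the single vector $u$ fed in, so one genuinely needs the supremum structure of $\br_o(B)$ and a (double) limiting argument — first $\tfrac1n u \to 0$, then $v_k$ exhausting the sup — while keeping all the $\lambda$'s uniformly bounded away from $0$ and $\infty$ so that the normalizations $x = \lambda^{-1} y$ make sense. Normality of $X_+$ is what keeps the estimates uniform throughout, and closedness of $X_+$ is what keeps all limits in the cone.
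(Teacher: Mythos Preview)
Your argument is correct and follows the same route as the paper --- apply Proposition~\ref{re:pert-eigen-comp-prep} with shrinking perturbations, then extract limits via compactness of $K$ --- but you work harder than necessary. The paper collapses your double limit into a single one by using the homogeneity $\gamma_B(\alpha u)=\gamma_B(u)$ for $\alpha>0$: choose a sequence $(u_n)$ in $X_+$ with \emph{both} $\|u_n\|\to 0$ \emph{and} $\gamma_B(u_n)\to\br_o(B)$ simultaneously (pick $w_n$ with $\gamma_B(w_n)\to\br_o(B)$ and set $u_n=\tfrac{1}{n\|w_n\|}w_n$), then a single application of Proposition~\ref{re:pert-eigen-comp-prep} per $n$ and one passage to a subsequence already yields $x$ with $Kx=\lambda x$ and $\lambda\ge\br_o(B)$. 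You even gestured at this diagonal device (``or by $\tfrac1n u_{\varepsilon_n}$'') before abandoning it for the nested-limit version; it is the cleaner path and avoids the bookkeeping you flagged as the main obstacle. The treatment of the case $B=K$ is identical to the paper's.
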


\begin{proof}
Since $\gamma_B(\alpha u) = \gamma_B (u)$ for all $u \in X_+$, $\alpha >0$,
we can choose a sequence $(u_n)$ in $X_+$ such that $\|u_n\| \to 0$
and $\gamma_B(u_n) \to \br_o(B)$.
Define
\[
K_n (x) =  K(x) +   u_n , \qquad x \in X_+, n \in \N .
\]
By Proposition \ref{re:pert-eigen-comp-prep}, for each $n \in \N$, there exist $x_n
\in X_+$ such that $\|x_n\|=1$ and $\lambda_n \ge \gamma_B(u_n)$ such that
\[
\lambda_n x_n = K_n (x_n) = K(x_n) + u_n.
\]
 Since $K$ is compact, after choosing a subsequence,
$K x_n$ converges to some $y \in X_+$.
The corresponding subsequence of $(\lambda_n)$ is bounded
and $\lambda_n \to \lambda \ge \br_o(B)>0$ after choosing another subsequence.
Since $u_n \to 0$,  $x_n \to x \in X_+$ with $\|x\|=1 $ and $K x = \lambda x$. If
$B =K$,
$\br_o(B) \ge \gamma_B(x) = \lambda$  and so
$\br_o(B) = \lambda$.
\end{proof}

\begin{remark}
\label{re:pert-eigen-comp-rem}
Let $B =K$. Then $\br_o(B) = \br_+(B)$ by Theorem \ref{re:spec-rad-alt-char} (ii).
Further, if $X$ is a Banach space,  normality of the cone does not need to be assumed
(\cite[Thm.2.1]{Nus81} and the remarks in \cite{LeNu}).
\end{remark}


\section{Monotonically compact maps}


\begin{definition}
\label{def:mono-compact}
Let $u\in X_+$, $u \ne 0$, and  $B: X_+ \to X_+$ be pointwise $u$-bounded. $B$
is called {\em monotonically compact}
if $(Bx_n)$ converges for each monotone sequence $(x_n)$ in $X_+$ for which
there is some $c >0$ such that $x_n \le c u$ for all $n \in \N$.
\end{definition}

If $X_+$ is regular (Definition \ref{def:regular}),
then every continuous $B: X_+ \to X_+$ is
monotonically compact.

\begin{definition}
 \label{def:minihedral}
 $X_+$ is called {\em minihedral} if $ x \land y = \inf \{x,y\}$
exists for any $x, y \in X_+$.
\end{definition}

\begin{theorem}
\label{re:minihed-subeigen}
 Let $X$ be an ordered normed vector space with
a  normal minihedral cone $X_+$. Let $B: X_+ \to X_+$ be continuous, order preserving,
and  homogeneous. Further let  $B$ be monotonically compact and
uniformly $u$-bounded for some $u \in X_+$, $u\ne 0$. Finally assume
that $\br_+(B) > 0$.

Then there exists some $x \in X_+$, $x \ne 0$,
such that $B x \ge \br_+ (B) x$ and $\br_o(B) = \br_+(B)$.

\end{theorem}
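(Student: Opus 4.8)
The overall plan. Set $r=\br_+(B)$. Since $B$ is uniformly $u$-bounded, Theorem~\ref{re:approx-power-order-bounded}(a) (with $m=1$) gives $\br_o(B)=\gamma_B(u)=r$; in particular $\limsup_{k\to\infty}\|B^k u\|^{1/k}=r$. So the assertion $\br_o(B)=\br_+(B)$ is immediate, and it remains to produce $x\in X_+$, $x\neq0$, with $Bx\ge rx$. I would run a Collatz--Wielandt argument: let $S=\{\lambda>0:\exists\,x\in X_+\setminus\{0\},\ Bx\ge\lambda x\}$. By Theorem~\ref{re:spec-rad-lower-est}, $S\subseteq(0,r]$, so it suffices to exhibit a witness for every $\lambda<r$ and then pass to the limit $\lambda\uparrow r$.

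Construction of witnesses for $\lambda<r$. Fix $\rho>0$ and, for $\lambda\in(0,r)$, define $Q_\lambda(x)=\tfrac1\lambda Bx\wedge\rho u$ on the order interval $C_\rho=\{x\in X_+:x\le\rho u\}$; the meet exists by minihedrality, $Q_\lambda$ maps $C_\rho$ into $C_\rho$, and $Q_\lambda$ is order preserving and continuous (the meet with the fixed element $\rho u$ is continuous on a normal cone). Starting from the top, $y_0=\rho u$, $y_{n+1}=Q_\lambda(y_n)$, one gets $y_1\le y_0$ and hence, by monotonicity of $Q_\lambda$, a decreasing sequence $(y_n)$ in $C_\rho$. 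Monotone compactness makes $(By_n)$ converge, and then the recursion $y_{n+1}=\tfrac1\lambda By_n\wedge\rho u$ together with continuity of the meet forces $(y_n)$ itself to converge, to a fixed point $x_\lambda^*\in C_\rho$ of $Q_\lambda$. From $x_\lambda^*=\tfrac1\lambda Bx_\lambda^*\wedge\rho u\le\tfrac1\lambda Bx_\lambda^*$ one reads off $Bx_\lambda^*\ge\lambda x_\lambda^*$.

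Non-collapse — the crux. The work is to show $x_\lambda^*\neq0$, and this is where $\gamma_B(u)=r$ is used. Enlarging $c$, assume $Bx\le c\|x\|u$ with $c\ge1$. For $y\in C_\rho$, $y\neq0$, put $\theta(y)=\min\{1,\tfrac{\rho\lambda}{c\|y\|}\}\in(0,1]$; a short check (split according to whether the cap $\wedge\rho u$ is active) gives $Q_\lambda(y)\ge\theta(y)\,\tfrac1\lambda By$. Iterating and using homogeneity, $y_{1+m}\ge\big(\prod_{k=1}^m\theta(y_k)\big)\lambda^{-m}B^m y_1$, and likewise $y_1=\tfrac\rho\lambda Bu\wedge\rho u\ge\kappa\rho\,Bu$ for some $\kappa=\kappa(\lambda)>0$, with $Bu\neq0$ since $\gamma_B(u)=r>0$; hence $y_{1+m}\ge\kappa\rho\big(\prod_{k=1}^m\theta(y_k)\big)\lambda^{-m}B^{m+1}u$. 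Now suppose $x_\lambda^*=0$. As $y_n\to x_\lambda^*$ in norm, $\|y_n\|\to0$, so $\theta(y_n)=1$ for all large $n$ and $\prod_{k=1}^m\theta(y_k)\ge\theta_*>0$ for all $m$. Applying the functional $\psi$ of Proposition~\ref{re:normal-mon-funct} (so $\delta\|z\|\le\psi(z)\le\|z\|$ on $X_+$) gives $\psi(y_{1+m})\ge\kappa\rho\delta\theta_*\,\lambda^{-m}\|B^{m+1}u\|$; since $\limsup_k\|B^k u\|^{1/k}=r>\lambda$, the right-hand side tends to $+\infty$ along a subsequence, contradicting $\psi(y_{1+m})\le\|y_{1+m}\|\to0$. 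Therefore $x_\lambda^*\neq0$, and $\lambda\in S$.

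Passing to $\lambda\uparrow r$, and the main obstacle. Since $Q_\lambda\le Q_{\lambda'}$ pointwise whenever $r>\lambda\ge\lambda'$, the map $\lambda\mapsto x_\lambda^*$ is decreasing; so for $\lambda_n\uparrow r$ the sequence $(x_{\lambda_n}^*)$ is decreasing in $C_\rho$. Monotone compactness makes $(Bx_{\lambda_n}^*)$ converge to some $w$, whence $x_{\lambda_n}^*=\tfrac1{\lambda_n}Bx_{\lambda_n}^*\wedge\rho u\to x^*:=\tfrac1r w\wedge\rho u$, $Bx^*=w$ by continuity, and $Bx^*\ge rx^*$ by closedness of $X_+$. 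The remaining point — and the main obstacle — is $x^*\neq0$. I expect this to follow either from a $\lambda$-uniform sharpening of the estimate in the previous paragraph, or from the fact (which should be a consequence of monotone compactness, minihedrality and normality, and which I would isolate as a lemma) that $B$ carries the order intervals $C_\rho$ into relatively compact sets: in the borderline case $\psi(x_{\lambda_n}^*)\to0$, the inequality $Bx_{\lambda_n}^*\le(c/\delta)\psi(x_{\lambda_n}^*)u$ shows the cap in $Q_{\lambda_n}$ is inactive at $x_{\lambda_n}^*$ for large $n$, so $Bx_{\lambda_n}^*=\lambda_n x_{\lambda_n}^*$ — the $x_{\lambda_n}^*$ are genuine eigenvectors; renormalizing by $\psi$ keeps them in a fixed order interval, and a convergent subsequence of their $B$-images produces $x\neq0$ with $Bx=rx$. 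Either way $r\in S$, so there is $x\in X_+\setminus\{0\}$ with $Bx\ge\br_+(B)x$, which with the already-noted equality $\br_o(B)=\br_+(B)$ completes the proof.
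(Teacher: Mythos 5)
Your reduction of $\br_o(B)=\br_+(B)$ to Theorem \ref{re:approx-power-order-bounded}(a) is fine, and your fixed-$\lambda$ construction (the capped operator $Q_\lambda(x)=\tfrac1\lambda Bx\wedge\rho u$, the decreasing iteration from $\rho u$, and the non-collapse argument via $\theta(y)$ and $\limsup_k\lambda^{-k}\|B^{k+1}u\|=\infty$ for $\lambda<r$) is essentially sound, up to one repairable point: "the meet with $\rho u$ is continuous on a normal cone" is not available for a merely minihedral normal cone (there is no modulus, hence no lattice-norm estimate); what saves you is that your sequences are monotone, so the inequality $0\le a\wedge c-b\wedge c\le a-b$ for $a\ge b$ together with normality gives the needed convergence — exactly the squeeze the paper uses.

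The genuine gap is the step you yourself flag: non-vanishing of the limit as $\lambda\uparrow r$. Your fixed-$\lambda$ non-collapse estimate uses $\lambda<r$ in an essential way (it needs $\lambda^{-m}\|B^{m+1}u\|$ to blow up along a subsequence, which fails at $\lambda=r$), and it yields no lower bound on $\psi(x_\lambda^*)$ that is uniform in $\lambda$, so the "uniform sharpening" route is not just unexecuted but unlikely to exist in that form. The alternative you propose — that monotone compactness, minihedrality and normality force $B$ to map order intervals $[0,\rho u]$ into relatively compact sets — is false as stated: if $X_+$ is regular (e.g.\ $X=L^1(0,1)$ with $u=\mathbf 1$), every continuous map is monotonically compact, yet the identity leaves the non-compact order interval $[0,\mathbf 1]$ fixed. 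Since in your borderline case the renormalized eigenvectors $z_n$ (with $Bz_n=\lambda_n z_n$, $\psi(z_n)=1$, $z_n\le\rho' u$) are not monotone, monotone compactness gives you nothing for them, and the crux of the theorem remains unproved. The paper avoids this two-stage difficulty altogether with a single sequence $x_0=u$, $x_k=\min\{Bx_{k-1}+2^{-k}u,\,u\}$: monotone compactness plus the squeeze gives a limit $x$ with $Bx\ge x=\min\{Bx,u\}$, and the assumption $x_k\to0$ is refuted not by a growth estimate at some $\lambda<r$ but by showing it would force $B^{n+1}u\le\tfrac12 u$ for some $n$, which contradicts $\br_+(B)=1$ via Corollary \ref{re:spec-rad-upper-est}; this is an argument that works exactly at the spectral radius, which is what your proposal is missing. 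To complete your route you would need either a $\lambda$-uniform lower bound on the $x_\lambda^*$ or a compactness property of $B$ on order intervals as an additional hypothesis; as written, the proof is incomplete at its decisive point.
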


The first part of the proof follows \cite[L.9.5]{KrLiSo} almost verbatim where $B$ is assumed
to be a linear operator on the ordered Banach space $X$. It is given
here for the ease of the reader and the author's peace of mind
that linearity can be safely dropped.

\begin{proof}
Let $u \in X_+$ such that $B$ is uniformly $u$-bounded.
Since $B$ is  homogeneous, we can assume that $\br_+(B) =1$ and $u \ne 0$.
We define
\[
x_0= u, \qquad x_k = \min\{ B x_{k-1} + 2^{-k} u, u \}, \quad k \in \N.
\]
Then $x_k \le u=x_0$ for all $n\in \N$. By induction, since $B$ is order
preserving, $x_{k+1}\le x_{k}$ for all $k \in \N$. Since $B$ is monotonically
compact, there exists some $z \in X_+$ such that $B x_k \to z$ as $k \to \infty$ and $Bx_k \ge z$ for all $k \in \N$.
Then $y_k := B x_{k-1} + 2^{-k} u \to z $. Further $y_k \ge B x_{k-1} \ge z$.
Thus,
\[
x_k = \min\{ y_k, u \} \ge \min\{z, u\}=:x.
\]
 Notice that
\[
\min\{y_k, u\} + z - y_k \le \min \{z, u\}=x.
\]
So $0 \le x_k -x  \le y_k -z$. Since $X_+$ is normal and $y_k \to z$,
also $x_k \to x$. Since $B$ is continuous, $B x =z \ge x$.
Further
$
x = \min\{z, u \} = \min \{Bx, u\}.
$

It remains to show that $x \ne 0$. Suppose $x_k \to 0$ as $k \to \infty$.
Since $B$ is uniformly $u$-bounded, there exists some $m \in \N$ such
that $B x_{k-1} + 2^{-k} u  \le u$ for all $k \ge m$. Hence
\[
x_k = B x_{k-1} + 2^{-k} u  , \qquad k \ge m.
\]
In particular, $2^m x_m \ge u$ and $x_k \ge B x_{k-1}$ for all $k \ge m$. Since $B$ is order preserving
and  homogeneous,
\[
2^m x_{m +n} \ge B^n (2^m x_m ) \ge B^n u
\]
and
\[
2^m B x_{m+n} \ge B^{n+1} u.
\]
Since $x_{m+n} \to 0$ as $n \to \infty$ and $B$ is $u$-bounded,
$2^m B x_{m+n} \le (1/2) u$ for sufficiently large $n$. This shows
that, for some $n \in \N$, $ B^{n+1} u \le (1/2) u$.
By Corollary \ref{re:spec-rad-upper-est},
 $\br_+ (B) < 1$, a contradiction.

 This shows that $x \ne 0$ and $Bx \ge x$. By (\ref{eq:growth-factor}),
 $\gamma_B(x) \ge 1$
 and $r_o(B) \ge 1= \br_+(B)$ by (\ref{eq:spec-rad-orb}).
\end{proof}

The following definition is similar to the one in \cite[III.2.1]{Boh}

\begin{definition}
An order preserving map $B: X_+ \to X_+$ is called
strictly increasing if for any $x, y \in X_+$ with
$x \le y$ and $\|x\| \ne \|y\|$ there exists some $\epsilon > 0$
and some $m \in \N$ such that $B^m (y) \ge (1+\epsilon) B^m x$.
\end{definition}

\begin{theorem}
\label{re:minihed-eigen}
 Let $X$ be an ordered normed vector space with
a  normal minihedral cone $X_+$. Let $B: X_+ \to X_+$ be continuous, strictly
increasing,
and  homogeneous. Further let  $B$ be  monotonically compact
 and
uniformly $u$-bounded for some $u \in X_+$, $u\ne 0$.
Finally assume
that $\br_+(B) > 0$.

Then there exists some $x \in X_+$, $x \ne 0$,
such that $B x = \br_+ (B) x$.
\end{theorem}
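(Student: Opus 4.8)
The plan is to upgrade the sub-eigenvector furnished by Theorem~\ref{re:minihed-subeigen} to a genuine eigenvector, using strict increase to control the norms along the orbit. Since a strictly increasing map is in particular order preserving, Theorem~\ref{re:minihed-subeigen} applies and produces some $x \in X_+$, $x \ne 0$, with $Bx \ge \br_+(B)\,x$. Replacing $B$ by $(1/\br_+(B))B$ --- which is again homogeneous, continuous, strictly increasing, monotonically compact, and uniformly $u$-bounded, and which has cone spectral radius $1$ by homogeneity of $\br_+$ --- we may assume $\br_+(B)=1$, so that $x \le Bx$. Since $B$ is order preserving, the orbit $(B^n x)_{n\ge 0}$ is increasing, and because $B^n x \ge x \ne 0$ it is bounded away from $0$ (if $B^n x = 0$ then $-x = B^n x - x \in X_+$, forcing $x=0$).

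The crucial step is to show $\|B^n x\| = \|x\|$ for every $n$. Suppose instead $\|B^{n+1}x\| \ne \|B^n x\|$ for some $n$. Then $B^n x \le B^{n+1}x$ with distinct norms, so strict increase yields $\epsilon>0$ and $m\in\N$ with $B^{m+n+1}x \ge (1+\epsilon)B^{m+n}x$. Setting $v = B^{m+n}x$, we have $v\ne 0$ (as $v \ge x \ne 0$) and $Bv \ge (1+\epsilon)v$, whence $\br_+(B)\ge 1+\epsilon$ by Theorem~\ref{re:spec-rad-lower-est} --- contradicting $\br_+(B)=1$. Hence the orbit is norm-bounded, with $\|B^n x\| = \|x\|$ for all $n$.

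Now uniform $u$-boundedness gives $c>0$ with $By \le c\|y\|u$ for all $y\in X_+$; in particular $B^n x = B(B^{n-1}x) \le c\|x\|u$ for all $n\ge 1$. Thus $(B^n x)_{n\ge 1}$ is an increasing sequence in $X_+$ that is order bounded by a multiple of $u$, so monotone compactness of $B$ forces $(B^{n+1}x)=(B(B^n x))$ to converge; let $z$ be its limit. Then $B^n x\to z$ as well; since the sequence is increasing we get $z\ge Bx\ge x\ne 0$, so $z\ne 0$. By continuity of $B$, $Bz = \lim_{n}B^{n+1}x = z$, that is, reinstating the original normalization, $Bz = \br_+(B)\,z$ with $z\ne 0$, as required.

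I expect the main obstacle to be pinning down that strict increase is precisely the hypothesis needed to keep the norm constant along the $B$-orbit of the sub-eigenvector: this is what transforms the merely sub-invariant $x$ (whose orbit could a priori grow subexponentially) into an orbit that is order bounded by a multiple of $u$. Once that boundedness is secured, monotone compactness and continuity close the argument with no further difficulty; one should, however, verify carefully that dividing $B$ by $\br_+(B)$ preserves every hypothesis in the theorem, which it does because each of them scales correctly under multiplication by a positive constant.
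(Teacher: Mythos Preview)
Your proof is correct and follows essentially the same route as the paper's: both reduce to $\br_+(B)=1$, invoke Theorem~\ref{re:minihed-subeigen} for a sub-eigenvector $x$, use strict increase together with Theorem~\ref{re:spec-rad-lower-est} to force $\|B^n x\|$ to be constant along the orbit, then use uniform $u$-boundedness to place the increasing orbit under a multiple of $u$, apply monotone compactness to get convergence, and finish by continuity. The only cosmetic difference is that the paper normalizes $\|x\|=1$ at the outset, while you track the common value $\|x\|$ throughout.
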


\begin{proof}
We can assume that $\br_+(B) =1$.
By Theorem  \ref{re:minihed-subeigen}, there exists some $x \in X_+$,
$\|x\|=1$ such that $B x \ge  x$. Then the sequence $(x_n)_{n \in \Z_+}$
in $X_+$ defined by $x_n = B^n x$ is increasing. We claim that $\|x_n\| =1$
for all $n \in \N$. Suppose that $n \in \N$ is the first $n$ with
$\|x_n\| \ne  1$. Then $\|x_{n-1} \|\ne \|x_n\|$. Since $B$ is strictly
increasing, there exists some $\epsilon > 0$ and some
$m \in \N$ such that $B^m x_n \ge (1+\epsilon) B^m x_{n-1}$.
By definition of $(x_n)$,
$B y \ge (1+\epsilon) y $ for $y = B^m x_{n-1} \ge x$.
Since $y \in X_+$ and $y \ne 0$, $\br_+(B) \ge 1+ \epsilon$, a
contradiction.

Set $x_0 =x$. Then $x_n = B x_{n-1}$ for $n \in \N$.
Since $B$ is uniformly $u$-bounded, there exists some $c \ge 0$
such that $x_n \le c \|x_{n-1}\| u = cu$. Since $B$ is monotonically
compact,
 $(Bx_{n-1}) = (x_{n}) $ converges
to some $y \in X_+$, $\|y\| =1$. Since $B$ is continuous, $By =y$.
\end{proof}

\section{Eigenfunctionals}
\label{sec:dual}

The celebrated Krein-Rutman theorem does not only state the existence
of a positive eigenvector but also of a positive eigenfunctional
of a positive linear map provided that the map is compact
and the cone is total or that the cone is normal and solid. We explore what still can be done if the additivity of the operator is dropped.
Recall the left resolvents $R_\lambda$, $\lambda > \br_+(B) > 0$,
in Section \ref{subsec:left-res}.

\begin{proposition}
\label{re:eigenfun-prep}
 Let $X$ be an ordered normed vector space
with cone $X_+$ and $B: X_+ \to X_+$ be homogeneous,
order preserving, and uniformly order bounded. Assume that $\br= \br_+(B) >0$
and that there exists $v \in X_+$ and $x^* \in X^*_+$ such that
$x^* (R_\lambda (v)) \to \infty$ as $\lambda \to \br_+$.

Then there exists a homogeneous,
order preserving, bounded nonzero functional $\phi: X_+ \to \R$
such that $ \phi \circ  B  = \br \phi$
and $\phi(u) > 0$ for each order bound $u$ of $B$.
\end{proposition}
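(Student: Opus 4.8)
The plan is to realize $\phi$ as a generalized limit, as $\lambda\to\br_+$, of suitably normalized functionals built from the left resolvent. Fix an order bound $u$ of $B$, so there is $c>0$ with $Bx\le c\|x\|u$ for all $x\in X_+$; note $u\ne0$, since otherwise $B=0$ and $\br_+(B)=0$, contrary to hypothesis. The left resolvents $R_\lambda$, $\lambda>\br$, are homogeneous and order preserving (they inherit this from $B$ through their defining series) and satisfy the left-resolvent identity $R_\lambda(Bx)=\lambda R_\lambda(x)-x$. For $\lambda>\br$ close to $\br$, where the denominator is positive, set
\[
\phi_\lambda(x)=\frac{x^*(R_\lambda(x))}{x^*(R_\lambda(u))},\qquad x\in X_+.
\]
Each $\phi_\lambda$ is nonnegative, homogeneous and order preserving because $R_\lambda$ is and $x^*\in X_+^*$, and $\phi_\lambda(u)=1$.

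First I would assemble the estimates that control this family near $\br$. From $Bw\le c\|w\|u$ and the monotonicity and homogeneity of $B$ one gets $B^nw\le c\|w\|B^{n-1}u$ for $n\ge1$ and $w\in X_+$; summing the series defining $R_\lambda$ (and using that $X_+$ is closed) yields
\[
R_\lambda(w)\le\frac1\lambda\bigl(w+c\|w\|R_\lambda(u)\bigr),\qquad w\in X_+.
\]
Applying $x^*$ and dividing by $x^*(R_\lambda(u))$ gives $\phi_\lambda(w)\le\lambda^{-1}\bigl(x^*(w)/x^*(R_\lambda(u))+c\|w\|\bigr)$. Taking $w=v$ and using the hypothesis $x^*(R_\lambda(v))\to\infty$ forces $x^*(R_\lambda(u))\to\infty$ as $\lambda\to\br_+$; feeding this back in shows $\phi_\lambda(w)$ is bounded for $\lambda$ near $\br$, with $\limsup_{\lambda\to\br_+}\phi_\lambda(w)\le\br^{-1}c\|w\|$. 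The left-resolvent identity moreover gives
\[
\phi_\lambda(Bx)=\lambda\,\phi_\lambda(x)-\frac{x^*(x)}{x^*(R_\lambda(u))},\qquad x\in X_+,
\]
whose last term tends to $0$.

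Now define $\phi(x)=\limsup_{\lambda\to\br_+}\phi_\lambda(x)$ for $x\in X_+$. By the bound above this is a finite nonnegative real number, so $\phi:X_+\to\R$ is well defined, nonnegative, homogeneous, order preserving, and bounded with $\|\phi\|_+\le\br^{-1}c$; also $\phi(u)=1$, so $\phi\ne0$. For the eigenfunctional equation one checks $\limsup_{\lambda\to\br_+}[\lambda\,\phi_\lambda(x)]=\br\,\phi(x)$ (using $\phi_\lambda(x)\ge0$ and $\lambda\to\br$), so that the displayed identity passes to the limit, the $o(1)$ term dropping out, and gives $\phi(Bx)=\br\,\phi(x)$. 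Finally, for an arbitrary order bound $u'$, say $Bx\le c'\|x\|u'$, monotonicity and homogeneity of $\phi$ together with the eigenfunctional equation give $\br=\br\,\phi(u)=\phi(Bu)\le c'\|u\|\,\phi(u')$, hence $\phi(u')\ge\br/(c'\|u\|)>0$.

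I expect the crux to be the limiting step, in two respects: showing that the \emph{normalizing} denominator $x^*(R_\lambda(u))$ blows up — not merely the given $x^*(R_\lambda(v))$ — which is exactly where the uniform order boundedness of $B$ is indispensable; and verifying that the nonlinear operation $\limsup$ nonetheless preserves homogeneity, monotonicity, boundedness and, above all, the relation $\phi\circ B=\br\phi$. This last point survives only because the $\phi_\lambda$ are nonnegative and the defect in the left-resolvent identity is an $o(1)$ term; alternatively, one may replace $\limsup$ by a Banach (ultrafilter) limit along a sequence $\lambda_k\downarrow\br$, which renders all these verifications routine.
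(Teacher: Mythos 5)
Your proposal is correct, and it follows the same backbone as the paper — the functionals $x^*\circ R_\lambda$ together with the identity $R_\lambda(Bx)=\lambda R_\lambda(x)-x$ — but the way you extract the limit and normalize is genuinely different. The paper sets $\psi_n=x^*\circ R_{\lambda_n}$, normalizes by the operator norm $\|\psi_n\|_+$, and obtains $\phi$ as a cluster point of the $\phi_n$ in the topology of pointwise convergence via Tychonoff's theorem; the functional equation is then checked by choosing subsequences converging simultaneously at $x$ and $Bx$, and a separate estimate $\lambda_n\le c\,\phi_n(u)+\|x^*\|/\|\psi_n\|_+$ is needed to rule out $\phi=0$. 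You instead normalize at a fixed order bound, $\phi_\lambda=x^*\circ R_\lambda/x^*(R_\lambda(u))$, and take a $\limsup$ as $\lambda\to\br_+$: this makes nondegeneracy automatic ($\phi(u)=1$) but obliges you to prove the domination estimate $R_\lambda(w)\le\lambda^{-1}\bigl(w+c\|w\|R_\lambda(u)\bigr)$, which is not in the paper, both to see that the denominator blows up (so $\phi_\lambda$ is defined near $\br$) and to get the uniform bound $\phi_\lambda(w)\le\lambda^{-1}\bigl(x^*(w)/x^*(R_\lambda(u))+c\|w\|\bigr)$; you correctly identify that the $\limsup$ passes through $\phi_\lambda(Bx)=\lambda\phi_\lambda(x)+o(1)$ only because the defect vanishes, the family is bounded near $\br$, and the relation links values of one and the same functional, so no simultaneous subsequence extraction (and no Tychonoff) is needed. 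Your closing step $\br=\phi(Bu)\le c'\|u\|\,\phi(u')$, deducing positivity at every order bound from the eigenrelation and monotonicity, is also a little slicker than the paper's route, which reruns the norm estimate. In short: the paper's compactness argument is more robust to the choice of normalization, while your version is more elementary and makes the nonvanishing of $\phi$ transparent at the cost of one extra resolvent estimate.
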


Our proof will not provide  continuity of $\phi$.

\begin{proof}
Let $x^*$ and $v$ be as above. Choose a sequence $(\lambda_n)$
in $(\br, \infty)$ such that $x^* (R_{\lambda_n} (v)) \to \infty$
as $n \to \infty$. Define $\psi_n : X_+ \to \R_+$ by
\[
\psi_n(x) = x^* (R_{\lambda_n} (x)), \qquad n \in \N, x \in X_+.
\]
The functionals $\psi_n$ are homogeneous, order preserving,
and continuous, $\psi_n (v) \to \infty$ as $n \to \infty$.
By (\ref{eq:left-resolvent-job}),
\[
\psi_n(B x) = x^* ((R_{\lambda_n} (Bx))= x^* (R_{\lambda_n} (\lambda_n x)- x)
=
\lambda_n \psi_n (x) - x^* x.
\]
We set
\[
\phi_n = \frac{\psi_n }{\|\psi_n\|_+}.
\]
Then $\|\phi_n\|_+ =1 $ and
\begin{equation}
\label{eq:dual-converge}
 \lambda_n \phi_n- \phi_n \circ B = \frac{x^*}{\|\psi_n\|} \to 0, \qquad n \to \infty.
\end{equation}
By Tychonoff's compactness theorem for topological products, there exists some
\[
\phi \in \bigcap_{m\in \N} \overline{  B_m },
\qquad
B_m = \{ \phi_n; n \ge m\},
\]
where the closure is taken in the topology of pointwise convergence
on $\{x\in X_+; \|x\| \le 1 \}$. Notice that all $\phi_n$
are order preserving, bounded and homogeneous. $\phi$ inherits these
properties. For instance, let $x_1, x_2 \in X_+$ and $x_1 \le x_2$. Then
there exist a strictly increasing sequence $(n_j)$ of natural
number such that $\phi_{n_j} (x_i) \to \phi (x_i) $ as $j \to \infty$, $i=1,2$.
Since $\phi_{n_j} (x_1) \le \phi_{n_j} (x_2)$ for all $j \in \N$,
also $\phi (x_1)\le \phi(x_2)$.

Similarly, for $x \in X_+$,
there exists a strictly increasing sequence $(n_j)$ of natural
number such that $\phi_{n_j} (x) \to \phi (x) $ and $\phi_{n_j} (Bx)
\to \phi(Bx)$ as $j \to \infty$. By (\ref{eq:dual-converge}),
$\phi (Bx) = \br \phi(x)$.

We need to rule out that $\phi$ is the zero functional.
Let $u \in X_+$, $\|u\|=1$, such that $B$ is uniformly $u$-bounded:
There exist some $c \ge 0$ such that $B x \le c \|x\| u$
for all $x \in X_+$.

Let $x\in X$, $\|x\| \le 1$. Since each $\phi_n$ is order preserving,
by (\ref{eq:dual-converge}),
\[
\lambda_n \phi_n( x)    = \phi_n ( Bx) + \frac{ x^*(x)}{\| \psi_n \|_+}
\le
\phi_n (c \|x\| u) + \frac{ x^*(x)}{\| \psi_n \|_+}
\le
c \phi_n ( u)  +  \frac{\|x^*\|}{\|\psi_n\|_+} .
\]
Since this holds for all $x \in X_+$, $\|x\|\le 1$,
\[
\lambda_n = \lambda_n \|\phi_n\|_+ \le
c \phi_n (u) + \frac{\|x^*\|}{\|\psi_n\|_+} .
\]
Since $\phi_{n_j} (u) \to \phi (u)$ for some strictly increasing
sequence $(n_j)$ in $\N$ and $\|\psi_{n_j}\|_+ \to \infty$,
\[
0< \br \le c \phi(u) . \qedhere
\]

\end{proof}

\begin{theorem}
\label{re:dual}
 Let $X$ be an ordered normed vector space
with cone $X_+$ and $B: X_+ \to X_+$ be homogeneous,
order preserving, and uniformly order bounded. Assume that $\br= \br_+(B) >0$
and that there exists some $v \in X_+$, $v \ne 0$, such
that $B v \ge \br v$. Then there exists a homogeneous,
order preserving, bounded nonzero functional $\phi: X_+ \to \R$
such that $ \phi \circ B  = \br \phi$.
\end{theorem}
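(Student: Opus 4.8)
The plan is to derive Theorem~\ref{re:dual} directly from Proposition~\ref{re:eigenfun-prep}. All the standing hypotheses of that proposition ($X$ ordered normed with cone $X_+$, $B$ homogeneous, order preserving, uniformly order bounded, and $\br = \br_+(B) > 0$) are already assumed here, so the only thing left to establish is its extra hypothesis: that there is some $x^* \in X_+^*$ with $x^*(R_\lambda(v)) \to \infty$ as $\lambda \to \br+$, where $R_\lambda$ is the left resolvent of Section~\ref{subsec:left-res}. Feeding this $v$ and $x^*$ into Proposition~\ref{re:eigenfun-prep} then produces the desired $\phi$.

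First I would iterate the subinvariance inequality. Since $B$ is order preserving and homogeneous, applying $B$ to $B^n v \ge \br^n v$ gives $B^{n+1} v \ge B(\br^n v) = \br^n\, Bv \ge \br^{n+1} v$, so by induction $B^n v \ge \br^n v$ for every $n \in \N$. Comparing the series $R_\lambda(v) = \sum_{n \ge 0} \lambda^{-n-1} B^n v$ term by term with the geometric series $\sum_{n \ge 0} \lambda^{-n-1}\br^n v = (\lambda - \br)^{-1} v$ (convergent for $\lambda > \br$), and using that $X_+$ is a closed convex cone --- so that every partial sum of the difference lies in $X_+$ and hence so does the limit --- I obtain
\[
R_\lambda(v) \;\ge\; \frac{1}{\lambda - \br}\, v, \qquad \lambda > \br .
\]

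Next I need a positive continuous functional that does not vanish at $v$. Since $X_+$ is a cone and $v \ne 0$, we have $-v \notin X_+$ (otherwise $v \in X_+ \cap (-X_+) = \{0\}$). Applying the Hahn-Banach separation theorem to the point $-v$ and the closed convex set $X_+$ yields $x^* \in X^*$ and $\alpha \in \R$ with $x^*(-v) < \alpha \le x^*(y)$ for all $y \in X_+$; taking $y = 0$ gives $\alpha \le 0$, and replacing $y$ by $ty$ and letting $t \to \infty$ forbids $x^*(y) < 0$, so $x^* \in X_+^*$, and then $x^*(v) > -\alpha \ge 0$. Because $x^*$ is order preserving and $R_\lambda(v) - (\lambda - \br)^{-1} v \in X_+$, we get $x^*(R_\lambda(v)) \ge (\lambda - \br)^{-1} x^*(v) \to \infty$ as $\lambda \to \br+$. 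Proposition~\ref{re:eigenfun-prep} applied with this $v$ and this $x^*$ now delivers a homogeneous, order preserving, bounded nonzero functional $\phi$ with $\phi \circ B = \br \phi$ (and, as a byproduct, $\phi(u) > 0$ for every order bound $u$ of $B$), which is the assertion.

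I do not expect a genuine obstacle: the entire content of the theorem is carried by Proposition~\ref{re:eigenfun-prep}, and the role of $Bv \ge \br v$ is exactly to force $R_\lambda(v)$ to blow up as $\lambda \to \br+$. The one step that repays a moment's care is the separation argument; it is important to produce the positive functional $x^*$ by separation rather than to try first to show $\{R_\lambda(v); \lambda > \br\}$ is norm-unbounded, because deducing norm-unboundedness from $R_\lambda(v) \ge (\lambda - \br)^{-1} v$ would need normality of the cone, which is not assumed here (though norm-unboundedness does follow a posteriori once $x^*$ is in hand).
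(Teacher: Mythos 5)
Your proposal is correct and follows essentially the same route as the paper: iterate $Bv \ge \br v$ to get $B^n v \ge \br^n v$, deduce $x^*(R_\lambda(v)) \ge x^*(v)/(\lambda-\br) \to \infty$ for a positive functional $x^*$ with $x^*(v)>0$, and invoke Proposition~\ref{re:eigenfun-prep}. The only cosmetic differences are that the paper cites Krasnosel'skii for the existence of such an $x^* \in X_+^*$ where you supply the Hahn--Banach separation argument, and that it applies $x^*$ termwise to the series rather than first proving the vector inequality $R_\lambda(v) \ge (\lambda-\br)^{-1}v$.
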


\begin{proof}
Let $(\lambda_n)$ be a sequence in $(\br, \infty)$ such that
$\lambda_n \searrow \br$ as $n \to \infty$. Let $v \in X_+$, $\|v\|=1$
and $B v \ge \br v$.
Choose some $x^* \in X_+^*$ such that $x^* x > 0$ \cite[Sec.1.4.1]{Kra}.
Then
\[
x^* (R_\lambda (v)) = \sum_{k=0}^\infty \lambda^{-(k+1)} x^* (B^k v )
\ge
\sum_{k=0}^\infty \lambda^{-(k+1)} x^* (\br^k v )
=
\frac{x^*v}{\lambda - \br} \stackrel{\lambda\to \br+} {\longrightarrow}
\infty.
\]
Apply Proposition \ref{re:eigenfun-prep}.
\end{proof}

The following result is well-known for vectors rather than
functionals if $B$ is linear \cite[Thm.9.3]{KrLiSo}{\cite[Thm.2.2]{Nus}}.

\begin{lemma}
\label{re:eigen-power-riddance}
 Let $X$ be an ordered normed vector space
with cone $X_+$ and $B: X_+ \to X_+$. Let $\br > 0$, $p \in \N$, and $\psi:
X_+ \to \R$ with $\psi \circ B^p = \br^p \psi$. Then
$\varphi \circ B = \br \varphi$ for
\[
\varphi= \sum_{k=0}^{p-1} \br^{-k}  \psi \circ B^k .
\]
\end{lemma}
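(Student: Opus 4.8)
The plan is to verify directly that $\varphi = \sum_{k=0}^{p-1} \br^{-k}\, \psi \circ B^k$ satisfies $\varphi \circ B = \br \varphi$ by a telescoping computation. First I would write out
\[
\varphi(Bx) = \sum_{k=0}^{p-1} \br^{-k}\, \psi(B^k(Bx)) = \sum_{k=0}^{p-1} \br^{-k}\, \psi(B^{k+1}x),
\]
and then reindex the sum by setting $j = k+1$, so that it becomes $\sum_{j=1}^{p} \br^{-(j-1)}\,\psi(B^j x) = \br \sum_{j=1}^{p} \br^{-j}\,\psi(B^j x)$.

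The key step is to observe that the hypothesis $\psi \circ B^p = \br^p \psi$ lets me replace the $j=p$ term: $\br^{-p}\,\psi(B^p x) = \br^{-p}\,\br^p\,\psi(x) = \psi(x) = \br^{-0}\,\psi(B^0 x)$. Therefore the sum $\sum_{j=1}^{p}\br^{-j}\,\psi(B^j x)$ has the same value as $\sum_{j=0}^{p-1}\br^{-j}\,\psi(B^j x)$, which is exactly $\varphi(x)$. Combining, $\varphi(Bx) = \br\,\varphi(x)$ for all $x \in X_+$, which is the claim.

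Since this is a purely algebraic identity requiring no structure on $X$, $X_+$, or $B$ beyond what is stated, there is essentially no obstacle; the only point requiring a little care is the reindexing together with the correct use of the $B^p$ relation to cycle the top term back to the bottom. Nothing needs to be assumed about continuity, homogeneity, order preservation, or boundedness of $B$ or $\psi$, so no earlier results from the excerpt are invoked.
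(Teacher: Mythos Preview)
Your proof is correct and follows essentially the same approach as the paper: expand $\varphi(Bx)$, reindex, and use the relation $\psi\circ B^p=\br^p\psi$ to cycle the top term back to the $k=0$ term, recovering $\br\,\varphi(x)$. The paper's proof is a one-line version of exactly this computation.
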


\begin{proof}
For all $x \in X_+$,
\[
\varphi(B(x)) = \sum_{k=0}^{p-1} \br^{-k} \psi (B^{k+1} (x))=
\sum_{k=1}^{p-1} \br^{1-k} \psi (B^{k} (x)) + \br^{1-p} r^p \psi (x)
=
\br \varphi(x) .
\]
\end{proof}

\begin{corollary}
\label{re:dual-power}
 Let $X$ be an ordered normed vector space
with cone $X_+$ and $B: X_+ \to X_+$ be homogeneous, bounded
and order preserving. Assume that $\br= \br_+(B) >0$
and that there exist $m,\ell \in \N$ and some $v \in X_+$, $v \ne 0$, such
that $B^m v \ge \br^m v$ and $B^\ell$ is uniformly order bounded. Then there exists a homogeneous,
order preserving, bounded nonzero functional $\phi: X_+ \to \R$
such that $ \phi \circ  B  = \br \phi$.
\end{corollary}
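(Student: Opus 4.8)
The plan is to reduce to Theorem \ref{re:dual} applied to a suitable power $C = B^p$ of $B$, and then to transfer the resulting eigenfunctional back from $C$ to $B$ via Lemma \ref{re:eigen-power-riddance}.

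I would take $p = m\ell$; this choice is engineered to meet two requirements at once. First, $p \ge \ell$, and a one-line induction shows that uniform order boundedness is inherited by higher powers: if $B^\ell x \le c\|x\|u$ for all $x \in X_+$, then $B^p x = B^\ell(B^{p-\ell}x) \le c\|B^{p-\ell}x\|u \le c\|B\|_+^{\,p-\ell}\|x\|u$, so $B^p$ is uniformly $u$-bounded. Second, $m$ divides $p$, and iterating the hypothesis $B^m v \ge \br^m v$ (applying the order-preserving, homogeneous map $B^m$ repeatedly, exactly as in the proof of Proposition \ref{re:pert-eigen-comp-prep}) yields $B^p v = B^{m\ell} v \ge \br^{m\ell} v = \br^p v$.

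Next I would check that $C := B^p$ satisfies the hypotheses of Theorem \ref{re:dual}: it is homogeneous, order preserving and bounded as a power of such a map; $\br_+(C) = \br_+(B^p) = \br^p > 0$ by the power rule for the cone spectral radius; $C$ is uniformly order bounded by the previous paragraph; and $C v = B^p v \ge \br^p v = \br_+(C)\,v$ with $v \ne 0$. Theorem \ref{re:dual} then produces a homogeneous, order preserving, bounded, nonzero functional $\psi : X_+ \to \R$ with $\psi \circ B^p = \br^p \psi$.

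Finally I would invoke Lemma \ref{re:eigen-power-riddance} with $\psi$, $\br$ and $p$ to get $\varphi = \sum_{k=0}^{p-1}\br^{-k}\,\psi\circ B^k$ with $\varphi\circ B = \br\varphi$, and then verify that $\varphi$ has the stated properties. Each summand $\psi\circ B^k$ is homogeneous, order preserving and bounded ($\|\psi\circ B^k\|_+ \le \|\psi\|_+\|B\|_+^k$) and $\br^{-k}>0$, so the finite sum $\varphi$ is homogeneous, order preserving and bounded. For nonvanishing I would use that any homogeneous order-preserving functional is automatically nonnegative on $X_+$ (since $0\le x$ forces $0=\psi(0)\le\psi(x)$); hence the $k=0$ term gives $\varphi(x)\ge\psi(x)$ for all $x\in X_+$, and since $\psi$ is nonzero there is $x_0\in X_+$ with $\psi(x_0)>0$, whence $\varphi(x_0)>0$. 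I do not expect any genuine obstacle: the only points requiring argument beyond citation are the two small monotonicity/iteration facts in the second paragraph and the nonnegativity observation that rules out cancellation in $\varphi$; the role of the hypotheses on $m$ and $\ell$ is precisely to make $p=m\ell$ work simultaneously for both invocations.
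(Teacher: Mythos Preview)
Your proposal is correct and follows exactly the paper's approach: set $p=m\ell$, apply Theorem \ref{re:dual} to $B^p$, and then invoke Lemma \ref{re:eigen-power-riddance} to descend from $B^p$ to $B$. You have simply filled in the details (uniform order boundedness of higher powers, the iteration giving $B^p v\ge \br^p v$, and the nonvanishing of $\varphi$) that the paper leaves implicit.
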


\begin{proof} Set $p = m\ell$. Then $B^p \ge \br^p v$ and $B^p$ is uniformly
order bounded. By Theorem \ref{re:dual}, there exists some
homogeneous,
order preserving, bounded nonzero functional $\varphi: X_+ \to \R$
such that $ \varphi ( B^p(x))  = \br^p \varphi(x)$ for all $x\in X_+$.
Apply the previous lemma and notice that $\varphi$ inherits
the desired properties from $\psi$ and $B$.
\end{proof}

\begin{theorem}
\label{re:eigenfun-exist-summary}
Let $X$ be an ordered normed vector space with
  normal cone $X_+$ and $B: X_+ \to X_+$ be homogeneous, order
preserving,  continuous, and some power of $B$ is uniformly order bounded, $\br= \br_+(B) > 0$. Then
there exists some homogeneous, order preserving, bounded
$\phi: X_+ \to R_+$ with $\phi \circ B = \br \phi$ if at least
one of the following assumptions is satisfied.

\begin{itemize}

\item[(a)] A power of $B$ is compact.

\item[(b)] $X_+$ is a minihedral cone, and some power of $B$ is monotonically
compact.

\end{itemize}
\end{theorem}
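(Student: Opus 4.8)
The plan is to reduce Theorem \ref{re:eigenfun-exist-summary} to the machinery already assembled, namely the eigenfunctional existence result of Corollary \ref{re:dual-power} and the eigenvector results for compact and monotonically compact maps. The one hypothesis we must produce in order to invoke Corollary \ref{re:dual-power} is a positive subeigenvector: some $v \in X_+$, $v \neq 0$, and some $m \in \N$ with $B^m v \ge \br^m v$. The other hypothesis of that corollary (that some power of $B$ is uniformly order bounded) is given outright in the statement. So the entire proof consists of manufacturing a subeigenvector under each of the two alternative assumptions (a) and (b).

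First I would handle case (b). Since some power of $B$, say $B^\ell$, is uniformly order bounded, it is in particular uniformly $u$-bounded for some $u \in X_+$, $u \neq 0$. Now $B^\ell$ is also continuous, order preserving, homogeneous, and monotonically compact (a power of a monotonically compact map is again monotonically compact, or more simply we can arrange matters so that the power which is monotonically compact and the power which is uniformly order bounded coincide by passing to a common multiple), and $\br_+(B^\ell) = \br_+(B)^\ell > 0$. Hence Theorem \ref{re:minihed-subeigen}, applied to $B^\ell$ on the normal minihedral cone $X_+$, yields some $v \in X_+$, $v \neq 0$, with $B^\ell v \ge \br_+(B^\ell) v = \br^\ell v$. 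That is exactly the subeigenvector we need, with $m = \ell$.

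For case (a), a power of $B$ is compact; say $B^q$ is compact (and continuous, since $B$ is). Then $\br_o(B^q) = \br_+(B^q) = \br^q > 0$ by Theorem \ref{re:spec-rad-alt-char}(ii) together with the relation $\br_+(B^q) = \br_+(B)^q$. Applying Theorem \ref{re:pert-eigen-comp} with $K = B = B^q$ in that theorem's notation (i.e. working with the map $B^q$ in the role of both $B$ and $K$), we obtain some $x \in X_+$, $\|x\| = 1$, and $\lambda = \br_o(B^q) = \br^q$ with $B^q x = \br^q x$; in particular $B^q x \ge \br^q x$, which is the desired subeigenvector with $m = q$. In both cases I would then invoke Corollary \ref{re:dual-power}, with $v$ the subeigenvector just constructed and with $B^\ell$ (respectively a power witnessing uniform order boundedness) playing the role of the uniformly-order-bounded power, to conclude that there is a homogeneous, order preserving, bounded nonzero $\phi \colon X_+ \to \R$ with $\phi \circ B = \br \phi$; since $\phi$ is order preserving with $\phi(0)=0$ it takes values in $\R_+$ on $X_+$, giving the stated $\phi \colon X_+ \to \R_+$.

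The main obstacle is bookkeeping about which power is which: Theorem \ref{re:minihed-subeigen} requires the \emph{same} map to be both monotonically compact and uniformly $u$-bounded, whereas the hypothesis of Theorem \ref{re:eigenfun-exist-summary} only says \emph{some} (possibly different) powers have these properties. The clean fix is to note that if $B^{\ell}$ is uniformly order bounded then so is $B^{\ell'}$ for every $\ell' \ge \ell$ (compose with the bounded order-preserving homogeneous map $B^{\ell'-\ell}$ and use normality), and similarly monotone compactness is inherited by higher powers; hence replacing both exponents by their product (or any common upper bound) we may assume a single power $B^N$ is simultaneously monotonically compact and uniformly $u$-bounded, to which Theorem \ref{re:minihed-subeigen} applies directly. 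An entirely analogous remark lets us assume in case (a) that a single power is both compact and uniformly order bounded, so that the $v$ produced by Theorem \ref{re:pert-eigen-comp} and the uniformly-order-bounded power fed into Corollary \ref{re:dual-power} are compatible. Once this is sorted out, the rest is a routine chain of citations.
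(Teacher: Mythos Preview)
Your approach matches the paper's: for (a) the paper likewise combines Theorem \ref{re:pert-eigen-comp} with Corollary \ref{re:dual-power}, and for (b) it combines the minihedral subeigenvector result with the power-reduction lemma (the paper's citation of Theorem \ref{re:minihed-eigen} rather than Theorem \ref{re:minihed-subeigen} appears to be a slip, since no strict-increasing hypothesis is in force). Your added remark in (a) about arranging a single power to be both compact and uniformly order bounded is harmless but unnecessary: Corollary \ref{re:dual-power} allows the exponents $m$ and $\ell$ to differ.

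One point in your bookkeeping for (b) needs a little more care. Monotone compactness in Definition \ref{def:mono-compact} is relative to a specific element $u_1$ (the one for which the map is pointwise $u_1$-bounded), and Theorem \ref{re:minihed-subeigen} requires monotone compactness and uniform $u$-boundedness with respect to the \emph{same} element. Your common-upper-bound argument produces a power $B^N$ that is monotonically compact with respect to $u_1$ but uniformly bounded with respect to the possibly different $u$ coming from the hypotheses. The repair is short: since $B^a$ is pointwise $u_1$-bounded, applying this to the element $u$ gives $n,\gamma$ with $B^{an}u \le \gamma u_1$; then for $N = an+b$ one has $B^N x = B^{an}(B^b x) \le c\|x\|\,B^{an}u \le c\gamma\|x\|\,u_1$, so $B^N$ is uniformly $u_1$-bounded, while $B^N = B^{N-a}\circ B^a$ inherits monotone compactness with respect to $u_1$ from $B^a$ by continuity of $B^{N-a}$. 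With this choice of $N$ both conditions refer to the single element $u_1$, and Theorem \ref{re:minihed-subeigen} applies cleanly to $B^N$.
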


\begin{proof} (a) Combine Theorem \ref{re:pert-eigen-comp} with Corollary
\ref{re:dual-power}.

(b) Combine Theorem \ref{re:minihed-eigen} with Lemma \ref{re:eigen-power-riddance}.
\end{proof}

\begin{theorem}
\label{re:eigenfun-exist}
Let $X$ be an ordered normed vector space with a normal complete cone $X_+$.
Let $B$ be homogeneous, order-preserving,  pointwise order bounded and $B(x+y) \ge B (x) + B(y)$
for all $x,y \in X_+$.
 Assume that $\br= \br_+(B) >0$.
Then there exists a homogeneous, concave,
order preserving, bounded nonzero functional $\phi: X_+ \to \R$
such that $ \phi ( B(x))  = \br \phi(x)$ for all $x\in X_+$.
\end{theorem}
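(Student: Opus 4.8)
The plan is to reduce to Proposition \ref{re:eigenfun-prep} by producing the two inputs it requires: a power of $B$ that is \emph{uniformly} order bounded (so that ``uniformly order bounded'' in the hypothesis of the proposition is met) and a pair $v \in X_+$, $x^* \in X_+^*$ with $x^*(R_\lambda(v)) \to \infty$ as $\lambda \to \br+$. For the first input, note that $B$ is pointwise order bounded, i.e.\ pointwise $u$-bounded for some $u \in X_+$; since $X_+$ is complete and $B$ is continuous (here I would invoke Corollary \ref{re:ordpres-poshom-boun} to get continuity at $0$, hence boundedness, and note $B$ is continuous as a hypothesis is not stated but superadditive homogeneous order-preserving maps on a complete cone are continuous — actually I should simply add continuity to the bookkeeping via the fact that $B$ pointwise order bounded plus the Baire argument needs only closedness of the sublevel sets, which follows from continuity at $0$ together with monotonicity), Proposition \ref{re:u-bounded-pointw-unif} gives some power $B^\ell$ that is uniformly $u$-bounded.

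For the second input, I would apply Corollary \ref{re:resolvent-blowup2}: since $X_+$ is normal and complete, $B$ is homogeneous, order-preserving, superadditive, and $\br_+(B) > 0$, there exist $x \in X_+$ and $x^* \in X_+^*$ with $x^*(R_\lambda x) \to \infty$ as $\lambda \to \br+$. This is exactly the hypothesis needed. Now I face a mismatch: Proposition \ref{re:eigenfun-prep} assumes $B$ itself is uniformly order bounded, whereas here only $B^\ell$ is. So I would instead apply Proposition \ref{re:eigenfun-prep} to $B^\ell$ in place of $B$, with $\br^\ell = \br_+(B^\ell) > 0$, obtaining a homogeneous, order-preserving, bounded nonzero $\psi : X_+ \to \R$ with $\psi \circ B^\ell = \br^\ell \psi$. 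Then Lemma \ref{re:eigen-power-riddance} with $p = \ell$ produces $\varphi = \sum_{k=0}^{\ell-1} \br^{-k} \psi \circ B^k$ satisfying $\varphi \circ B = \br\varphi$; since each $\psi \circ B^k$ is homogeneous, order-preserving, and bounded (as $B$ is bounded), so is $\varphi$, and $\varphi$ is nonzero because $\psi$ is the $k=0$ term and all terms are nonnegative on $X_+$.

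There is still one subtlety that I expect to be the main obstacle: I must also verify that the resolvent-blowup hypothesis transfers from $B$ to $B^\ell$, since Proposition \ref{re:eigenfun-prep} applied to $B^\ell$ wants $\tilde x^*(\tilde R_\mu(\tilde v)) \to \infty$ as $\mu \to (\br^\ell)+$ where $\tilde R_\mu$ is the left resolvent of $B^\ell$. One checks directly that $\tilde R_{\lambda^\ell}(x) = \sum_{n\ge 0} \lambda^{-(n+1)\ell} B^{n\ell} x$, and the domination $R_\lambda(x) \le \sum_{j=0}^{\ell-1}\lambda^{-j-1} B^j(\tilde R_{\lambda^\ell}(x))$ — valid by superadditivity of $R_\lambda$, which is stated in \eqref{eq:resolv-superadd} — lets one bound $x^*(R_\lambda x)$ above by a finite combination of terms $x^*(B^j \tilde R_{\lambda^\ell} x)$; applying a suitable positive functional (or using that $x^*\circ B^j \in X_+^*$ after composing) shows that if $x^*(R_\lambda x)\to\infty$ then some $\tilde x^*(\tilde R_{\lambda^\ell} x)\to\infty$ too. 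Finally, the concavity of $\phi := \varphi$: this follows because $R_\lambda$ is superadditive hence so is each $\psi$-building block, and a finite nonnegative combination of homogeneous superadditive (equivalently concave) functionals is again homogeneous and concave. I would record $\phi(x) > 0$ for $x \in X_+$, $x \ne 0$, only if it comes for free; otherwise I would be content with ``nonzero'' as stated.
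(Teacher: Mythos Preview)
Your plan uses exactly the same ingredients as the paper --- Proposition~\ref{re:u-bounded-pointw-unif} to pass from pointwise to uniform order boundedness of a power $B^\ell$, Corollary~\ref{re:resolvent-blowup2} for the resolvent blowup, Proposition~\ref{re:eigenfun-prep} for the eigenfunctional, and Lemma~\ref{re:eigen-power-riddance} to descend from $B^\ell$ to $B$. The difference is only in the order of operations. The paper makes the reduction ``we can assume $B$ itself is uniformly order bounded'' first (i.e.\ replaces $B$ by $B^\ell$ throughout) and then applies Corollary~\ref{re:resolvent-blowup2} \emph{directly to $B^\ell$}. This works because $B^\ell$ inherits superadditivity from $B$ by an easy induction (use that $B$ is order preserving on $B^\ell(x+y)\ge B^\ell x+B^\ell y$ and then superadditivity once more), together with homogeneity, order preservation, and $\br_+(B^\ell)=\br^\ell>0$. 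So there is no transfer step at all.

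Your transfer argument, as written, does not close: the parenthetical ``$x^*\circ B^j\in X_+^*$'' is false, since $B^j$ is only superadditive, not additive, and Proposition~\ref{re:eigenfun-prep} genuinely needs a \emph{linear} $x^*$ (its proof uses $x^*(\lambda_n R_{\lambda_n}x-x)=\lambda_n x^*(R_{\lambda_n}x)-x^*(x)$). You could patch it by bounding $x^*(B^j\tilde R_{\lambda^\ell}x)\le \|x^*\|\,\|B^j\|_+\,\|\tilde R_{\lambda^\ell}x\|$ via normality to get $\|\tilde R_{\lambda^\ell}\|_+\to\infty$, and then rerun the uniform-boundedness argument of Corollary~\ref{re:resolvent-blowup2} --- but that is exactly re-proving the corollary for $B^\ell$, so you may as well invoke it directly for $B^\ell$ and skip the detour. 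Your remark on concavity (via superadditivity of $R_\lambda$, inherited by the $\psi_n$ and preserved under pointwise limits and nonnegative combinations) is correct and fills in a detail the paper leaves tacit. As for continuity of $B$: you are right that both Proposition~\ref{re:u-bounded-pointw-unif} and Corollary~\ref{re:resolvent-blowup2} use it; the paper's proof invokes these results without listing continuity among the hypotheses of the theorem, so this is a shared bookkeeping issue rather than something peculiar to your argument.
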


\begin{proof}
By Theorem \ref{re:u-bounded-pointw-unif}, we can assume
that some  power of $B$ is uniformly order bounded.
By Lemma \ref{re:eigen-power-riddance}, we can assume that
$B$ itself is uniformly order bounded.
 By Corollary \ref{re:resolvent-blowup2}, $x^*( R_\lambda ( x)) \to
\infty$ as $\lambda \to \br+$ with some $x\in X_+$, $x^* \in X_+^*$.
Apply Proposition \ref{re:eigenfun-prep}.
\end{proof}


\section{Scent of a beer barrel}
\label{subsec:beer}
We show existence of eigenvectors for two classes of order-preserving
homogeneous continuous maps $B$ which are not necessarily compact
themselves but have compact powers.

\begin{theorem}
\label{re:beer-exist}
Let $X$ be an ordered normed vector space with a
cone $X_+$ and $B:X_+ \to X_+$ be homogeneous, order-preserving and continuous.
Assume that $X_+$ is normal or $X$ a Banach space.

Let $Y$ be a linear subspace of $X$, $B(X_+) \subseteq Y$,  which carries a norm $\|\cdot\|_Y$
such that  $\{\|B x\|_Y; \; x \in X_+, \, \|x\|\le 1\}$ is bounded
and  $B$ is compact from $(Y_+, \|\cdot\|_Y)$ into $(X_+, \|\cdot\|)$
where $Y_+ = X_+ \cap Y$.

Assume that $\br_+(B) > 0$ and that there is some  $u \in Y_+$, $u \ne 0$, such that   $B$ is uniformly $u$-bounded.

Then there exists some $v \in X_+ \cap Y$, $\|v\|=1$, such that $Bv = \br_+(B) v$.
\end{theorem}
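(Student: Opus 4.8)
The plan is to reduce to the compact-map framework of Theorem \ref{re:pert-eigen-comp} by working on the auxiliary Banach-type space $Y$ (or rather the metric cone $Y_+$ equipped with a suitable norm) where $B$ becomes compact. First I would observe that $B$ maps $X_+$ into $Y_+$ and, by the uniform $u$-boundedness hypothesis, $Bx \le c\|x\|u$ for all $x \in X_+$; since $u \in Y_+$, this means $Bx \in (X_u)_+$ (the $u$-bounded elements), so in fact $B$ maps $X_+$ into the order interval generated by $u$. The natural move is to consider $B$ restricted to the set $C = \{x \in X_+ : \|x\| \le 1\}$ or, better, to iterate: $B^2$ maps $X_+$ first into $Y_+$ via $B$, then $B$ sends $Y_+$ compactly into $X_+$. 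Thus $B^2 : X_+ \to X_+$ is compact (as a composition of a bounded map into $(Y_+,\|\cdot\|_Y)$ followed by a compact map out of it), while $B$ itself need not be.

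Next I would apply the existing eigenvector machinery to $K := B^2$. Since $B^2$ is continuous, compact, order-preserving, homogeneous, and $\br_o(B^2) = \br_o(B)^2 > 0$ (using $\br_+(B) > 0$ together with Theorem \ref{re:spec-rad-alt-char}, as a power of $B$ is uniformly order bounded hence $\br_o(B)=\br_+(B)$ — and one needs normality of $X_+$ or $X$ Banach for that, which is exactly the standing hypothesis plus Remark \ref{re:pert-eigen-comp-rem}), Theorem \ref{re:pert-eigen-comp} applied with $B=K=B^2$ yields some $w \in X_+$, $\|w\|=1$, with $B^2 w = \br_+(B)^2 w$. This $w$ lies in $Y$ because $w = \br_+(B)^{-2} B^2 w = \br_+(B)^{-2} B(Bw)$ and $B(X_+) \subseteq Y$.

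Then I would pass from $B^2$ to $B$. Given $B^2 w = \br^2 w$ with $\br = \br_+(B)$, set $v := \br w + B w$ if $Bw$ and $w$ are not parallel, or handle the parallel case directly; in general the standard trick (cf. the proof of Lemma \ref{re:eigen-power-riddance}, transposed from functionals to vectors) is: $B(\br w + Bw) = \br Bw + B^2 w = \br Bw + \br^2 w = \br(\br w + Bw)$, so $v = \br w + Bw$ is an eigenvector of $B$ for $\br$ provided $v \ne 0$. Nonvanishing of $v$: since $w \ne 0$ and $B$ is order-preserving with $Bw \in X_+$, we have $v = \br w + Bw \ge \br w \ne 0$, so $v \ne 0$; then normalize. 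Moreover $v \in X_+ \cap Y$ since both $w$ (shown above) and $Bw$ lie in $Y$.

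The main obstacle I anticipate is making the compactness of $B^2 : X_+ \to X_+$ precise: one must check that the image under $B$ of a $\|\cdot\|$-bounded subset of $X_+$ is a $\|\cdot\|_Y$-bounded subset of $Y_+$ (this is exactly the hypothesis $\{\|Bx\|_Y : x \in X_+, \|x\|\le 1\}$ bounded), and then that $B$ carries $\|\cdot\|_Y$-bounded subsets of $Y_+$ to $\|\cdot\|$-precompact subsets of $X_+$ (the stated compactness of $B : (Y_+,\|\cdot\|_Y) \to (X_+,\|\cdot\|)$), so the composite sends $\|\cdot\|$-bounded sets to precompact sets. A secondary subtlety is the equality $\br_o(B)=\br_+(B)$, needed so that the eigenvalue produced by Theorem \ref{re:pert-eigen-comp} is genuinely $\br_+(B)$; this follows from Theorem \ref{re:spec-rad-alt-char}(iii) since $B$ is uniformly $u$-bounded (hence a power is uniformly order bounded) and $X_+$ is normal — or, if only $X$ Banach is assumed, one argues via Remark \ref{re:pert-eigen-comp-rem} after first establishing $B^2$ is compact so that the normality-free version applies to $K=B^2$, giving $\lambda = \br_o(B^2) = \br_+(B^2) = \br_+(B)^2$.
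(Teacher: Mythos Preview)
Your reduction to a compact square $B^2$ is sound, and applying Theorem \ref{re:pert-eigen-comp} (with Remark \ref{re:pert-eigen-comp-rem} in the Banach-space case) to $K=B^2$ does yield some $w\in X_+$, $\|w\|=1$, with $B^2w=\br_+(B)^2 w$. The gap is in the passage from $B^2$ to $B$. Your step ``$B(\br w + Bw)=\br Bw + B^2w$'' uses additivity of $B$, which is nowhere assumed: $B$ is only homogeneous and order-preserving. Lemma \ref{re:eigen-power-riddance} does not transpose to vectors here; note the paper's remark just before that lemma that the vector analogue is well-known only when $B$ is \emph{linear}. So from $B^2w=\br^2 w$ you cannot conclude that $\br w + Bw$ is an eigenvector of $B$, and the argument stalls.

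The paper circumvents exactly this difficulty by perturbation. It sets $\tilde B = B + \epsilon\,\psi(\cdot)\,u$ for a strictly positive homogeneous order-preserving functional $\psi$ (Proposition \ref{re:normal-mon-funct}); then $\tilde B^2$ is compact by the same $Y$-factorization you used, and Theorem \ref{re:pert-eigen-comp} supplies $v$ with $\tilde B^2 v = v$ (after normalization). The perturbation makes $\tilde B$ \emph{strictly} increasing in the sense of Lemma \ref{re:beer-exist-prep}(b), which forces uniqueness of the normalized eigenvector of $\tilde B^2$; since $\tilde B v$ is also such an eigenvector, uniqueness gives $\tilde B v = \lambda v$, and $\lambda=1$ follows from $\tilde B^2 v = v$. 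This replaces your additive trick by a uniqueness argument. Finally one lets $\epsilon\to 0$ along a sequence and uses the $Y$-compactness once more to extract a limit eigenvector of $B$ with eigenvalue $\br_+(B)$. To repair your proof you would need a genuinely nonlinear device to descend from $B^2$ to $B$; the paper's strict-monotonicity/uniqueness mechanism is one such device.
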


The normality of $X_+$ can be dropped if $X$ is an ordered Banach space.

\begin{lemma}
\label{re:beer-exist-prep}
Let $X$ be an ordered normed vector space with a
cone $X_+$ and $B:X_+ \to X_+$ be homogeneous, order-preserving and continuous.
Assume that $X_+$ is normal or $X$ a Banach space.

Let $\psi: X_+ \to \R_+$ that is homogeneous, order preserving, continuous and
$\psi (x) >0$ for all $x\in X_+$, $x \ne 0$. Let $u \in X_+$
and $B$ be uniformly $u$-bounded.

 Set  $Q(x)  = \psi(x) u$ and $\tilde B = B + Q$.

Then the following hold.
\begin{itemize}

\item[(a)]
$\tilde B$ is continuous, homogeneous and order preserving.

\item[(b)]
For all $x, y \in X_+$ with $x \le y$ and $\psi(x) < \psi (y)$,
there exists some $\eta > 0$ such that $\tilde By \ge (1+\eta) \tilde Bx$.

Let  $B$ satisfy the assumptions of Theorem \ref{re:beer-exist}. Then

\item[(c)]  $\tilde B^2 $ is compact.

\item[(d)] There exists  a unique $v\in X_+$ such that $\psi(v)=1$ and
$\tilde B v = \br_+(B) v $.

\end{itemize}
\end{lemma}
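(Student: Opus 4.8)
The plan is to prove the four parts of Lemma~\ref{re:beer-exist-prep} in order, each building on the previous ones, with the eigenvector existence in part~(d) as the culmination.

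\textbf{Parts (a) and (b).} For (a), note that $\tilde B = B + Q$ is a sum of two continuous homogeneous order-preserving maps: $B$ is so by hypothesis, and $Q(x) = \psi(x) u$ inherits continuity and homogeneity from $\psi$ and is order-preserving because $\psi$ is and $u \in X_+$. For (b), suppose $x \le y$ and $\psi(x) < \psi(y)$. Since $B$ is uniformly $u$-bounded there is $c \ge 0$ with $Bx \le c\|x\| u$, so $\tilde B x = Bx + \psi(x) u \le By + \big(c\|x\| + \psi(x)\big) u$; meanwhile $\tilde B y = By + \psi(y)u \ge By + \psi(y) u$. I would bound $By + (c\|x\|+\psi(x))u$ above by a scalar multiple of $\tilde B y$ by using $By \le c\|x\|u \le c\|x\|u$ again together with monotonicity — more precisely, estimate $\tilde B x \le By + K u$ where $K = c\|x\| + \psi(x)$, and $\tilde B y \ge By + \psi(y) u \ge \psi(y) u$, while also $\tilde B y \ge By$; choosing $\eta$ so that $(1+\eta)\psi(y) \ge \psi(y) + (K - \psi(x))_+ + $ (slack for the $By$ term) gives $(1+\eta)\tilde B y \ge \tilde B x$. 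The key point is that the strictly positive gap $\psi(y) - \psi(x)$ in the $u$-component cannot be cancelled, since everything else is controlled by $By$ plus a bounded multiple of $u$; this is routine once written out carefully.

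\textbf{Part (c).} Here I use the factorization structure from Theorem~\ref{re:beer-exist}: $B$ maps $X_+$ boundedly into $(Y_+,\|\cdot\|_Y)$, and $B$ is compact as a map $(Y_+,\|\cdot\|_Y) \to (X_+,\|\cdot\|)$. First observe $\tilde B(X_+) \subseteq Y$: indeed $B(X_+)\subseteq Y$ by hypothesis and $Q(x) = \psi(x)u \in Y$ since $u \in Y_+$. Now for $\tilde B^2$, write $\tilde B^2 = \tilde B \circ \tilde B$ and expand $\tilde B(\tilde B x) = B(\tilde B x) + \psi(\tilde B x) u$. The first summand $B(\tilde B x)$: since $\tilde B x$ ranges over a set on which we must show $\|\tilde B x\|_Y$ is bounded when $\|x\| \le 1$ — this holds because $\|Bx\|_Y$ is bounded by hypothesis and $\|Q x\|_Y = \psi(x)\|u\|_Y$ is bounded — the set $\{\tilde B x : \|x\| \le 1\}$ lies in a $\|\cdot\|_Y$-bounded subset of $Y_+$, on which $B$ is compact into $(X_+,\|\cdot\|)$; so $\{B(\tilde B x): \|x\|\le 1\}$ is precompact in $X$. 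The second summand is $\psi(\tilde B x) u$, a bounded set of scalar multiples of the single vector $u$, hence precompact. A sum of two precompact sets is precompact, so $\tilde B^2$ is compact.

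\textbf{Part (d).} This is the main obstacle and uses parts (a)--(c) together with the compact-power eigenvector results. Apply Theorem~\ref{re:pert-eigen-comp} (or the version for power-compact maps via Theorem~\ref{re:minihed-eigen} is not available since we have no minihedral hypothesis — so I would instead argue directly): since $\tilde B^2$ is compact, continuous, order-preserving and homogeneous, and $\br_o(\tilde B^2) = (\br_o(\tilde B))^2 > 0$ because $\tilde B \ge B$ gives $\br_+(\tilde B) \ge \br_+(B) > 0$ and normality (or the Banach space case) gives $\br_o = \br_+$, Theorem~\ref{re:pert-eigen-comp} with $K = B = \tilde B^2$ yields $w \in X_+$, $\|w\|=1$, with $\tilde B^2 w = \br_o(\tilde B^2) w$. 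Passing from an eigenvector of $\tilde B^2$ to one of $\tilde B$ requires the strict monotonicity from part~(b): along the orbit $w, \tilde B w, \tilde B^2 w, \dots$ if $\psi$ is not constant then (b) forces geometric growth incompatible with the spectral radius, so $\psi(\tilde B w) = \psi(w)$, and a squeezing/normalization argument (normalize so $\psi(v) = 1$) produces $v$ with $\tilde B v = \br_+(B) v$. I expect the delicate points to be: (i) identifying the eigenvalue as exactly $\br_+(B)$ rather than $\br_+(\tilde B)$ — this should follow because the eigenvector $v$ has $\tilde B v = \lambda v$ with $\lambda = \psi$-normalized, and comparing with $B$ via $\tilde B v = Bv + u \ge Bv$ and the lower/upper estimates of Corollary~\ref{re:spec-rad-upper-est} pins $\lambda$ down; and (ii) uniqueness, which I would get from the strict monotonicity in (b): if $v_1, v_2$ were two such eigenvectors with $\psi(v_i)=1$, consider $t^* = \sup\{t : v_1 \ge t v_2\}$, show $0 < t^* \le 1$ by normality and $u$-boundedness, and use (b) applied to $v_1 - t^* v_2$ (or the pair $t^* v_2 \le v_1$) to derive a contradiction unless $v_1 = v_2$. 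The strict-monotonicity-forces-uniqueness argument is the technical heart of part (d).
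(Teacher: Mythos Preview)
Your treatment of parts (a)--(c) is essentially the paper's argument: $Q$ is manifestly homogeneous, continuous, order preserving; the gap $\psi(y)-\psi(x)>0$ in the $u$-component is split to dominate both $\eta Bx$ (via $Bx\le c\|x\|u$) and $\eta\psi(x)u$; and $\tilde B$ maps bounded sets into $\|\cdot\|_Y$-bounded sets of $Y_+$, on which $B$ is compact, while $Q$ is rank-one. Your sketch of (b) is vague but points in the right direction.

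Part (d) has two problems. First, the eigenvalue in the statement is a typo: the paper's proof normalizes $\br_o(\tilde B)=1$ and shows $\tilde B v = v$, and in the proof of Theorem~\ref{re:beer-exist} the lemma is invoked with $\lambda_n=\br_+(B_n)=\br_+(\tilde B)$, not $\br_+(B)$. Your attempt to pin the eigenvalue at $\br_+(B)$ cannot succeed, and you should simply identify $\lambda$ with $\br_+(\tilde B)$ (which equals $\br_o(\tilde B)$ since $\tilde B$ is uniformly $u$-bounded).

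Second, and more seriously, your passage from an eigenvector of $\tilde B^2$ to one of $\tilde B$ does not work as written. You invoke part (b) along the orbit $w,\tilde Bw,\tilde B^2 w,\ldots$, but (b) requires $x\le y$, and there is no a priori order relation between $w$ and $\tilde Bw$. Even if you could conclude $\psi(\tilde Bw)=\psi(w)$, that alone does not give proportionality. The paper's remedy is to reverse the order of the two steps: prove \emph{uniqueness} of the $\psi$-normalized fixed point of $\tilde B^2$ first, and then the passage to $\tilde B$ is immediate, since $w=\tilde B v/\psi(\tilde B v)$ also satisfies $\tilde B^2 w=w$, hence $w=v$, so $\tilde Bv=\lambda v$ with $\lambda^2=1$. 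For uniqueness itself your idea with $t^*=[v_1]_{v_2}$ is the paper's, but you skipped a necessary preliminary: one must show $v$ and $w$ are \emph{comparable}. This uses that any $\tilde B^2$-fixed point $v$ satisfies $v=\tilde B^2 v\ge \psi(\tilde B v)u\ge \psi(\psi(v)u)u=\psi(v)\psi(u)u$, so $u$ is $v$-bounded, and $v=\tilde B(\tilde Bv)\le c\|\tilde Bv\|u$ by uniform $u$-boundedness of $\tilde B$, so $v$ is $u$-bounded. Only then does $[w]_v>0$, and (b) applied to the ordered pair $[w]_v v\le w$ yields the contradiction unless $\psi(w)=\psi([w]_v v)$, i.e.\ $[w]_v=1$.
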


\begin{proof} (a) Notice that $Q$ is continuous, homogeneous and order preserving.

(b)
We can assume that $x \ne 0$.
Since $B$ is uniformly $u$-bounded, there exists some $c \ge 0$ such that
$Bx \le c\|x\| u$ for all $x \in X_+$. Let $\delta = \frac{\psi(y) - \psi(x) }{2}$.
Then
\[
\begin{split}
\tilde B y = &
B x
+  \psi (x)u
+  2 \delta u
\\
\ge &
Bx +  \psi(x) u +  \delta \frac{\psi(x)} {\|\psi\|_+ \|x\|} u
+
\frac{ \delta}{(c+1)\|x\|}  B x
\ge
\eta \tilde B (x)
\end{split}
\]
with $\eta > 0 $ being the smaller of $\frac{\delta  }{\|\psi\|_+ \|x\|}$
and $\frac{\delta}{(c+1)\|x\|} $.

(c) Let $(x_k)$ be a bounded sequence in $X_+$. By assumption $(B x_k)$ is
 a bounded sequence in $(Y_+, \|\cdot\|_Y)$. Since $u \in Y \cap X_+$,
$(Q(x_k))$ is a bounded sequence in $Y$  and so is $(\tilde B x_k)_k$.
By the compactness assumption for $B$, $ (B (\tilde B x_k))_k$ has a subsequence
converging in $X_+$. Notice that $Q$ is compact from $X_+$ to $Y$ with
the stronger norm. After choosing suitable subsequences,
$ (B (\tilde B x_k))_k$ and $(Q (\tilde B x_k))_k$ converge and so does $(\tilde B^2 x_k)_k$.

(d) Since $\tilde B u \ge  \psi(u) u$, $\br_o(\tilde B) \ge  \psi (u) > 0$
by Theorem \ref{re:spec-rad-lower-est}.
We can assume that $\br_o(\tilde B) =1$.
By Theorem \ref{re:pert-eigen-comp} and Remark \ref{re:pert-eigen-comp-rem},
there exists some $v\in X_+$ such that $\psi(v)=1$ and
$\tilde B^2 v =  v $.
Assume that there is some $w \in X_+$ with
the same properties.

By construction of $\tilde B$,
\[
v \ge  \psi( Bv +  \psi(v)u) u
\ge
\psi (\psi(v)u) u = \psi(v) \psi(u) u.
\]
Since $\psi(u)>0$ and $\psi(v)>0$, $u$ is $v$-bounded. Since $B$ is uniformly $u$-bounded,
\[
v \le c \|\tilde B v\| u ,
\]
and $v$ is $u$-bounded. The same holds for $w$ such that $v$ and $w$ are comparable.
By construction, $\tilde B$ is uniformly $u$-bounded. So $\tilde B$ is uniformly
$v$-bounded and $\br_+(\tilde B) = \br_o(\tilde B)$.

Recall the functional $[\cdot]_v$ from (\ref{eq:order-min}).
Since $w$ is $v$-comparable, $w \ge [w]_v v$ with $[w]_v > 0$.

Suppose that $\psi (w) > \psi([w]_v v)$.
By part (b), there exists some $\eta > 0$ such that
\[
\tilde B w \ge (1+\eta) \tilde B ([w]_v v).
\]
Then
\[
w = \tilde B^2 w \ge   (1+\eta) [w]_v \tilde B^2  v = (1+\eta) [w]_v v.
\]
By definition, $[w]_v \ge (1+\eta) [w]_v$, a contradiction.

This implies $\psi(w) = \psi([w]_v v) = [w]_v \psi (v). $
Since $\psi(v) =1 = \psi(w)$, $[w]_v =1$ and $w \ge v$.

By symmetry, $v \ge w$ and $v = w$.

Now set $w = \tilde  Bv$. Since $\tilde B w =v\ne 0 $, $w\ne 0$. Then $\tilde B^2 w = w$.
By our previous consideration $\frac{1}{\psi(v)} v = \frac{1}{\psi(Bv)} Bv$.
So there exists some $\lambda > 0$
such that $\tilde Bv = \lambda v$. Then $v= \tilde B^2 v = \lambda \tilde B v = \lambda^2 v.$
This implies $\lambda =1$ and $\tilde B v =v $.
\end{proof}

\begin{proof}[Proof of Theorem \ref{re:beer-exist}]
Since $X_+$ is normal, by Proposition \ref{re:normal-mon-funct},
there is a homogeneous, subadditive, order-preserving functional
$\psi: X_+ \to \R_+$ such that $\psi(x)>0$
for all $x \in X_+$, $x\ne 0$.

Choose a sequence $(\epsilon_n)$ of positive numbers
such that $\epsilon_n \to 0$. Set $Q_n (x) = \epsilon_n \psi(x) u$
and $B_n = B + Q_n$.

By Lemma \ref{re:beer-exist-prep}, for each $n \in \N$ there exists a unique $v_n \in X_+$
such that $\psi(v_n) =1 $ and $B_n v_n = \lambda_n v_n$, $\lambda_n = \br_+(B_n)$.

By Theorem \ref{re:spec-rad-increasing}, $\lambda_n \ge \br_+(B)$ and $\lambda_n
\le \br_+(B + \eta \psi(\cdot) u)$ for all $n \in N$, where $\eta = \sup_{n\in \N}
\epsilon_n$.
After choosing a subsequence we can assume that $\lambda_n \to \lambda \ge \br_+(B)>0$.

 $(B v_n)$ is a bounded sequence in $(Y_+, \|\cdot\|_Y)$ and so is $(B_n v_n)$
 and thus $(\lambda_n v_n)$. Notice that
\[
\lambda_n^2 v_n = B (\lambda_n v_n) + \epsilon_n \psi(\lambda_n v_n) u.
\]
By the compactness assumption for $B$, $(B (\lambda _n v_n))$ converges after choosing a
subsequence. Since $\epsilon_n \to 0$ and $\lambda_n \to \lambda>0$, we can assume that $(v_n)$ converges to some
$w \in X_+$, $\|w\|=1$. Since $B$ is continuous, $B w = \lambda w$ and $\lambda
=\br_+(B)$ by Theorem \ref{re:spec-rad-lower-est}.
\end{proof}

$B$ is called {\em essentially compact} if there exists some $k \in \N$ such
that $B^k = K + L$ with a continuous compact operators $K : X_+
\to X_+$ and a linear operator bounded operator $L:X \to X$ and $\br(L) < [\br_+(B)]^k$.

\begin{theorem}
\label{re:beer-mono-comp1}
 Let $X$ be a normed vector space with a normal and minihedral cone $X_+$.
Let $B: X_+ \to X_+$ be homogeneous, continuous, order preserving,
and essentially compact, $\br_+(B) >0$. Assume that there is some $u \in X_+$,  $u \ne 0$, such that
 $B$ is uniformly $u$-bounded  and monotonically compact.
Further assume the following one-sided uniform continuity condition for $B$:

\begin{quote}
For any $\epsilon > 0$ and any $c > 0$ there exists some $\delta > 0$
such that $\|B (x+y) - B(x) \|_u \le \epsilon $ for all $x,y \in X_+ \cap X_u$
with $\|x\|_u \le c$ and $\|y\|_u \le \delta$.
\end{quote}

Then there exists some $v \in X_+$, $v \ne 0$, such that $Bv =\br_+(B)v$.
\end{theorem}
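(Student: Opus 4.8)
The plan is the classical one of approximating $B$ from above by strictly increasing perturbations, solving the eigenproblem for each perturbation via Theorem~\ref{re:minihed-eigen}, and then letting the perturbation vanish, with the essential compactness of $B$ and the one-sided uniform continuity hypothesis supplying the compactness needed for the limit. First I would fix the monotone functional $\psi$ of Proposition~\ref{re:normal-mon-funct}; after passing to the equivalent norm $\|\cdot\|^\di$ of Theorem~\ref{re:normal-equiv-norm} we may assume $\psi(x)=\|x\|$ for $x\in X_+$ (this changes neither the hypotheses nor $\br_+(B)$, since $\psi(-x)=0$ on $X_+$, and $\br_+$, boundedness, continuity, monotone compactness, uniform $u$-boundedness and essential compactness are invariant under an equivalent norm). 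Choosing $\epsilon_n\downarrow 0$, set $B_n=B+\epsilon_n\psi(\cdot)\,u$. Each $B_n$ is continuous, homogeneous, order preserving, uniformly $u$-bounded, and monotonically compact: for a monotone $(x_j)$ with $x_j\le cu$ the sequence $(Bx_j)$ converges because $B$ is monotonically compact and $(\psi(x_j))$ converges because it is monotone and bounded. By the computation in Lemma~\ref{re:beer-exist-prep}(b), $x\le y$ in $X_+$ with $\psi(x)<\psi(y)$ forces $B_ny\ge(1+\eta)B_nx$; since $\psi=\|\cdot\|$ on $X_+$ and $\psi$ is order preserving, this is exactly strict increase of $B_n$. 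Hence Theorem~\ref{re:minihed-eigen} yields $v_n\in X_+$ with $\psi(v_n)=1$ and $B_nv_n=\lambda_nv_n$, $\lambda_n=\br_+(B_n)$; by Theorem~\ref{re:spec-rad-increasing}, $\br_+(B)\le\lambda_n\le\br_+(B+\epsilon_1\psi(\cdot)u)=:\Lambda<\infty$.

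Next I would pass to the limit $n\to\infty$. Write $\br=\br_+(B)$ and take $k$ and $B^k=K+L$ from the definition of essential compactness, so $K$ is compact and $\br(L)<\br^k$. From $\lambda_nv_n=B_nv_n=Bv_n+\epsilon_nu$ and the uniform $u$-boundedness of $B$ one sees that $v_n,Bv_n,\dots,B^{k-1}v_n$ all lie in a fixed ball of $X_u$, the bound being independent of $n$ because $\|v_n\|=\psi(v_n)=1$. I would then prove by induction on $j$ that $B_n^jv_n=B^jv_n+e_{n,j}$ with $e_{n,j}\in X_+$ and $\|e_{n,j}\|_u\to 0$ as $n\to\infty$, for each fixed $j$. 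For $j=1$ this holds with $e_{n,1}=\epsilon_nu$. For the step, $B_n^{j}v_n=B\big(B^{j-1}v_n+e_{n,j-1}\big)+\epsilon_n\psi(B_n^{j-1}v_n)\,u$, so $e_{n,j}=\big[B(B^{j-1}v_n+e_{n,j-1})-B^jv_n\big]+\epsilon_n\psi(B_n^{j-1}v_n)u$, where the bracket is nonnegative because $e_{n,j-1}\ge 0$ and $B$ is order preserving, and is small in $\|\cdot\|_u$ by the one-sided uniform continuity hypothesis, applied with $x=B^{j-1}v_n$ in the fixed $X_u$-ball and $y=e_{n,j-1}$ of vanishing $\|\cdot\|_u$-norm; the remaining term is $O(\epsilon_n)$ in $\|\cdot\|_u$. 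By normality of $X_+$, also $\|e_{n,j}\|\to 0$.

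Taking $j=k$ gives $(\lambda_n^kI-L)v_n=Kv_n+e_{n,k}$. Since $\br(L)<\br^k\le\lambda_n^k$, the operators $(\lambda_n^kI-L)^{-1}$ exist, are given by Neumann series, and are bounded uniformly in $n$; and $(Kv_n)$ is precompact because $(v_n)$ is bounded. Hence along a subsequence $\lambda_n\to\lambda\in[\br,\Lambda]$, $Kv_n\to\kappa$, $e_{n,k}\to 0$, so that $v_n=(\lambda_n^kI-L)^{-1}(Kv_n+e_{n,k})\to(\lambda^kI-L)^{-1}\kappa=:v$. Then $v\in X_+$ since $X_+$ is closed, and $\psi(v)=\lim\psi(v_n)=1$, so $v\ne 0$. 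From $Bv_n=\lambda_nv_n-\epsilon_nu\to\lambda v$ and continuity of $B$ we get $Bv=\lambda v$; and $\lambda\ge\br$ together with $\br_+(B)\ge\lambda$, which follows from $Bv=\lambda v$ by Theorem~\ref{re:spec-rad-lower-est}, yields $\lambda=\br_+(B)$. Thus $Bv=\br_+(B)v$ with $v\ne 0$, as required.

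The heart of the argument, and the only step I expect to be genuinely delicate, is the uniform control in the limiting step: the error $B_n^kv_n-B^kv_n$ must vanish \emph{uniformly} in $n$, which is exactly what the one-sided uniform continuity hypothesis provides once it is read in the $\|\cdot\|_u$-metric, in which the perturbation $\epsilon_nu$ has size $\epsilon_n$; and $\lambda_n^kI-L$ must be invertible with a bound independent of $n$, which is precisely where the essential compactness condition $\br(L)<\br_+(B)^k$ enters. (Completeness of $X$, which is the situation of interest, is tacitly used so that the Neumann series converges in $X$ and the precompact sets are complete; otherwise one runs these purely linear facts in the completion.)
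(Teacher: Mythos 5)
Your proposal is correct and follows essentially the same route as the paper's proof: perturb to $B_n=B+\epsilon_n\psi(\cdot)u$ with $\psi$ a monotone (equivalent-norm) functional, get eigenpairs $(\lambda_n,v_n)$ from Lemma~\ref{re:beer-exist-prep}(b) and Theorem~\ref{re:minihed-eigen}, control $B_n^k v_n-B^k v_n$ in the $u$-norm by induction using the one-sided uniform continuity, and use the essential-compactness decomposition $B^k=K+L$ with $\br(L)<\br_+(B)^k$ to extract a convergent subsequence of $(v_n)$. The only (cosmetic) difference is the final limit step, where you invert $\lambda_n^k-L$ uniformly in $n$ while the paper sandwiches $(\lambda^k-L)v_n$ and applies the squeezing theorem (Corollary~\ref{re:squeeze}) before inverting at the limit value only; your explicit remark about where completeness enters the Neumann-series argument is a fair point that the paper leaves implicit.
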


\begin{proof}
We can assume that $\br_+(B)=1$.
Since $X_+$ is normal, there exists an equivalent monotone  norm. Let $\psi$
be the restriction of that norm to $X_+$ and $(\epsilon_n)$
be a sequence of positive numbers such that $\epsilon_n \searrow  0$.
 Define $B_n: X_+ \to X_+$
by $B_n (x) = Bx + \epsilon_n \psi(x) u$. Then $B_n$ is continuous, homogeneous,
order preserving, uniformly $u$-bounded and monotonically compact.
$B_n$ also satisfies Lemma \ref{re:beer-exist-prep} (b),
so $B_n$ is strictly increasing. By Theorem \ref{re:minihed-eigen},
 there exist eigenvectors
$v_n  \in X_+$, $\psi(v_n)=1$, such that
\begin{equation}
\label{eq:beer-mono-comp-eigen}
\lambda_n v_n = B_n (v_n)= B (v_n ) + \epsilon_n u,  \qquad \lambda_n =\br_+(B_n).
\end{equation}
By Theorem \ref{re:spec-rad-increasing}, $(\lambda_n)$ is a decreasing sequence and
$\lambda_n \ge 1 $. So $\lambda_n \to \lambda$ with $\lambda \ge 1$.
Notice that $\lambda_n v_n \ge B v_n$. Thus, for all $k \in \N$,
$\lambda_n^k v_n \ge B^k v_n$.

Since $\psi$ is the restriction of an equivalent norm to $X_+$,
the sequence $(v_n)$ is bounded. Since $B$ is uniformly $u$-bounded,
there is some $c \ge 0$ such that
\[
v_n \le \frac{1}{\lambda_n} ( B v_n + \epsilon_n u)
\le
c \|v_n\| u  + \epsilon_1 u, \qquad n \in \N.
\]
So $(v_n)$ is a $u$-bounded sequence in $X_u \cap X_+$.
By induction,
\begin{equation}
\label{eq:beer-mono-comp-eigen2}
B^k_n v_n \le B^k v_n + \alpha_n u , \qquad \alpha_n \to 0.
\end{equation}
This is true for $k=1$. Assume that $k \in \N$ and it holds for $k$. Then
\[
B^{k+1}_n v_n \le  B (B^k v_n + \alpha_n u) +  \epsilon_n \psi( B^k v_n + \alpha_n u) u
\]
Now $(x_n)$ with $x_n= B^k v_n$ is a $u$-bounded sequence in $X_u \cap X_+$.
Further, the restriction of $B$ to $X_u \cap X_+$ with $u$-norm is assumed
to be uniformly continuous on every $u$-bounded subset set of $X_u \cap X_+$.
\[
\begin{split}
B^{k+1}_n v_n \le &  B^{k+1} v_n + B (x_n + \alpha_n u) - B(x_n) +  \epsilon_n \psi( B^k v_n + \alpha_n u) u
\\
\le &
 B^{k+1} v_n + \|B (x_n + \alpha_n u) - B(x_n)\|_u u  + \epsilon_n [\psi( B^k v_n ) + \alpha_n \psi(u) ] u.
\end{split}
\]
So $B_n^{k+1} v_n \le B^{k+1} v_n + \tilde \alpha_n u$ with
\[
\tilde \alpha_n  = \|B (x_n + \alpha_n u) - B(x_n)\|_u  + \epsilon_n [\psi( B^k v_n ) + \alpha_n \psi(u) ] .
\]
Since $B$ satisfies the uniform $u$-continuity condition assumed
above, $\tilde \alpha_n \to 0$.

By (\ref{eq:beer-mono-comp-eigen}) and (\ref{eq:beer-mono-comp-eigen2}),
\[
B^k v_n \le \lambda_n^k v_n \le B^k v_n + \alpha_n u, \qquad \alpha_n \to 0.
\]

Since $B$ is essentially compact, there exists some $k \in \N$ such
that $B^k = K + L$ with a compact continuous operator $K : X_+
\to X_+$ and a linear bounded operator $L:X \to X$ and $\br(L) <  [\br_+(B)]^k=1$.
So
\[
K v_n \le \lambda_n^k v_n - L v_n \le K v_n + \alpha_n u, \qquad \alpha_n \to 0.
\]
Recall that $\lambda_n \searrow \lambda \ge 1$. We rearrange,
\[
K v_n + (\lambda^k - \lambda_n^k)v_n  \le (\lambda^k  - L) v_n \le K v_n + \alpha_n u +
(\lambda^k - \lambda_n^k)v_n , \qquad \alpha_n \to 0.
\]
Since $(v_n)$ is a bounded sequence, after choosing a subsequence,
$K v_n \to z$, $n \to \infty$, and both the left and the right hand side converge
to $z$. By the squeezing theorem (Corollary \ref{re:squeeze}),
$(\lambda^k - L)v_n \to z$. Since $\lambda^k \ge 1 > \br(L)$
and $(\lambda^k - L)$ has a continuous inverse, $v_n \to (\lambda^k - L)^{-1} z=:v$.

Since $\lambda_n \to \lambda \ge 1$, by (\ref{eq:beer-mono-comp-eigen}),
 $\psi (v) =1$ and $Bv =\lambda v$. Since $\br_+(B) =1$, $\lambda =1$.
\end{proof}

\begin{theorem}
\label{re:beer-mono-comp2}
 Let $X$ be a normed vector space with a normal and minihedral cone $X_+$.
Let $B: X_+ \to X_+$ be homogeneous, continuous, order preserving,
essentially compact, $\br_+(B) >0$. Assume that there is some $u \in X_+$,  $u \ne 0$, such that
 $B$ is uniformly $u$-bounded  and monotonically compact.
Further assume the following one-sided uniform continuity condition for $B$:

\begin{quote}
For any $\epsilon > 0$ and any $c > 0$ there exists some $\delta > 0$
such that $\|B (x+y) - B(x) \| \le \epsilon $ for all $x,y \in X_+ $
with $\|x\| \le c$ and $\|y\| \le \delta$.
\end{quote}

Then there exists some $v \in X_+$, $v \ne 0$, such that $Bv =\br_+(B)v$.
\end{theorem}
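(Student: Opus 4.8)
The plan is to run the argument of Theorem~\ref{re:beer-mono-comp1} essentially unchanged, the only new ingredient being the way the one-sided uniform continuity in $\|\cdot\|$ (rather than in $\|\cdot\|_u$) is used to pass to the limit. After replacing $B$ by $B/\br_+(B)$ — which scales $\br_+(B)$ to $1$, replaces the operator $L$ in the essential-compactness decomposition by $L/[\br_+(B)]^k$ so that $\br(L)<1$ still holds, and preserves all remaining hypotheses, the uniform continuity condition surviving up to a harmless change of $\epsilon$ — we may assume $\br_+(B)=1$. Since $X_+$ is normal, Theorem~\ref{re:normal-equiv-norm} supplies an equivalent monotone norm on $X$; let $\psi$ be its restriction to $X_+$, so that $\psi$ is homogeneous, order preserving, continuous, and strictly positive on $X_+\setminus\{0\}$. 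Pick $\epsilon_n\searrow 0$ and set $B_n=B+\epsilon_n\psi(\cdot)u$. Exactly as in the proof of Theorem~\ref{re:beer-mono-comp1}, each $B_n$ is continuous, homogeneous, order preserving, uniformly $u$-bounded, monotonically compact, and strictly increasing (Lemma~\ref{re:beer-exist-prep}(b), with $\psi$ replaced by $\epsilon_n\psi$), so Theorem~\ref{re:minihed-eigen} yields $v_n\in X_+$ with $\psi(v_n)=1$ and
\[
\lambda_n v_n=B v_n+\epsilon_n u,\qquad \lambda_n=\br_+(B_n).
\]
By Theorem~\ref{re:spec-rad-increasing} the sequence $(\lambda_n)$ is decreasing with $\lambda_n\ge\br_+(B)=1$, hence $\lambda_n\to\lambda\ge1$; and $(v_n)$ is bounded because $\psi$ is equivalent to $\|\cdot\|$.

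The crucial step, and the only place where the hypothesis on $\|\cdot\|$ enters, is to establish
\[
\lambda_n^j v_n=B^j v_n+\tau_n^{(j)},\qquad \tau_n^{(j)}\in X_+,\quad \|\tau_n^{(j)}\|\to0\ (n\to\infty),
\]
for $j=1,\dots,k$, where $k$ is the exponent from essential compactness. For $j=1$ this is the eigenrelation above. For the inductive step, homogeneity of $B$ gives $\lambda_n^{j+1}v_n=B(\lambda_n^j v_n)+\lambda_n^j\epsilon_n u=B(B^j v_n+\tau_n^{(j)})+\lambda_n^j\epsilon_n u$; since $(v_n)$ is bounded, so is $(B^j v_n)_n$, while $\|\tau_n^{(j)}\|\to0$, so the one-sided uniform continuity of $B$ (applied with $c=\sup_n\|B^j v_n\|$ and arbitrary $\epsilon>0$) forces $\|B(B^j v_n+\tau_n^{(j)})-B^{j+1}v_n\|\to0$, and this difference lies in $X_+$ by order preservation. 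Adding $\lambda_n^j\epsilon_n u$ (whose norm tends to $0$ since $(\lambda_n)$ is bounded) gives the claim for $j+1$. One keeps track of the fact that the finitely many base points $B^j v_n$, $j=0,\dots,k-1$, lie in one bounded set independent of $n$, so the uniform continuity applies uniformly in $n$. Taking $j=k$ yields $\lambda_n^k v_n=B^k v_n+\tau_n$ with $\tau_n\in X_+$ and $\|\tau_n\|\to0$.

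To finish, write $B^k=K+L$ with $K$ compact and continuous, $L$ linear and bounded, $\br(L)<1$. Then
\[
(\lambda^k-L)v_n=(\lambda^k-\lambda_n^k)v_n+(\lambda_n^k v_n-Lv_n)=(\lambda^k-\lambda_n^k)v_n+Kv_n+\tau_n.
\]
Passing to a subsequence, $Kv_n\to z$ (as $K$ is compact and $(v_n)$ bounded), while $(\lambda^k-\lambda_n^k)v_n\to0$ and $\tau_n\to0$; hence $(\lambda^k-L)v_n\to z$. Since $\lambda^k\ge1>\br(L)$, the operator $\lambda^k-L$ has a bounded inverse, so $v_n\to v:=(\lambda^k-L)^{-1}z$. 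Letting $n\to\infty$ in $\lambda_n v_n=Bv_n+\epsilon_n u$, using continuity of $B$, $\lambda_n\to\lambda$, $\epsilon_n\to0$, gives $Bv=\lambda v$, and $\psi(v)=\lim\psi(v_n)=1$, so $v\ne0$. Finally $Bv\ge\lambda v$ with $v\ne0$ forces $\lambda\le\br_o(B)\le\br_+(B)=1$ by Theorem~\ref{re:spec-rad-lower-est}, so $\lambda=1$ and, undoing the initial rescaling, $Bv=\br_+(B)v$.

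The main obstacle here is conceptual rather than computational. In Theorem~\ref{re:beer-mono-comp1} the $\|\cdot\|_u$-version of the uniform continuity produced order estimates $B_n^k v_n\le B^k v_n+\alpha_n u$ tailor-made for the squeezing theorem, whereas the $\|\cdot\|$-version produces a norm-small correction $\tau_n$, which is in fact more directly compatible with the compact-operator limiting argument and lets one dispense with the squeezing step. The delicate point is the bookkeeping in the induction — guaranteeing that every base point to which uniform continuity is applied stays in a single bounded set independent of $n$ — together with the verification that $\lambda^k>\br(L)$, which is exactly what makes $\lambda^k-L$ boundedly invertible and thereby upgrades convergence of $((\lambda^k-L)v_n)$ to convergence of $(v_n)$ itself.
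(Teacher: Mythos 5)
Your proposal is correct and follows essentially the same route as the paper: approximate $B$ by $B_n = B + \epsilon_n\psi(\cdot)u$, obtain eigenvectors $v_n$ via Theorem \ref{re:minihed-eigen}, prove by induction that $\lambda_n^k v_n = B^k v_n + \tau_n$ with $\tau_n \in X_+$ and $\|\tau_n\| \to 0$ using the norm version of the one-sided uniform continuity, and then pass to the limit through the essential-compactness decomposition $B^k = K + L$ with $\br(L) < \lambda^k$. The only cosmetic differences are that you phrase the induction in terms of $\lambda_n^j v_n$ rather than $B_n^j v_n$ (these coincide since $B_nv_n=\lambda_nv_n$) and you observe explicitly that, with equalities in place of the order sandwich of Theorem \ref{re:beer-mono-comp1}, the squeezing step degenerates to a direct limit.
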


\begin{proof}
As in the proof of Theorem \ref{re:beer-mono-comp1},
there exist eigenvectors
$v_n  \in X_+$, $\psi(v_n)=1$, such that
\begin{equation}
\lambda_n v_n = B_n (v_n)= B (v_n ) + \epsilon_n u,  \qquad \lambda_n =\br_+(B_n).
\end{equation}
By induction, $B^k_n v_n = B^k v_n +  u_n  $ with $u_n \in X_+$ and $u_n \to 0$.

Obviously this holds $k=1$. Assume that $k \in \N$ and the statement holds
for $k$. Then
\[
\begin{split}
B_n^{k+1} v_n = & B (B^k v_n +  u_n ) + \epsilon_n \psi (B^k v_n +  u_n ) u
\\
= &
B^{k+1} v_n + B(x_n +u_n) - B(x_n)  + \epsilon_n \psi(x_n + u_n) u
\end{split}
\]
where $(x_n)$ is the bounded sequence $x_n= B^k v_n $.
By the uniform continuity condition for $B$ and $u_n \to 0$, $\epsilon_n \to 0$,
\[
B_n^{k+1} v_n = B^{k+1} v_n + w_n,
\qquad
w_n = B(x_n +u_n) - B(x_n)  + \epsilon_n \psi(x_n + u_n) u \to 0.
\]
The remainder of the proof is the same as for Theorem \ref{re:beer-mono-comp1}.
\end{proof}

\section{Application to  a spatially distributed two-sex population}
\label{sec:two-sex}

The population we consider has individuals of both sexes which form pairs in order to reproduce. Most two-sex population models are formulated in continuous time
\cite{Had93, Had08, IMM}. Here we consider the case that the mating occurs
once a year and that the mating
season is short which makes a discrete-time model more appropriate.
We also assume that individuals do not live to see two mating seasons.

The spatial habitat of the population is represented by a Borel set $\Omega \subseteq \R^m $. If  $f: \Omega \to \R_+$ is an integrable function (with respect to the Lebesgue measure),
$f(\xi) $, $\xi \in \Omega$, represents the  number of newborns at $\xi \in \Omega$.



\subsection*{The migration operators}

In order to take account of the movements of individuals over the year, we consider  integral operators $K_j$, $j=1,2$,
\begin{equation}
\label{eq:migration-op}
(K_j f) (\xi) = \int_\Omega k_j(\xi , \eta) f(\eta) d \eta, \qquad
f \in \cM_+, \xi \in \Omega, j=1,2.
\end{equation}
Here
$k_j(\xi,\eta)\ge 0$ gives the rate at which individuals  that  are born at $\eta$
are female ($j=1$) or  male ($j=2$) and  will be at $\xi$ in the year after.
$\cM_+$ denotes the set of nonnegative Borel measurable functions on $\Omega$.

In fact, $\int_\Omega k_j(\xi, \eta) d \xi$ is the probability that
an individual born at $\eta$ is female or male, respectively, and
will be still alive in the year after.

\bigskip

Assume $k_j: \Omega^2 \to \R_+ $ to be   Borel measurable and
\begin{equation}
\int_\Omega k_j(\xi, \eta) d \xi \le 1, \qquad j=1,2, \quad \eta \in \Omega.
\end{equation}
Then the  $K_j$ are bounded linear operators on $L^1(\Omega)$
and $\|K_j f\|_1 \le \|f\|_1$ for all $f \in L^1(\Omega)$.

These assumptions are assumed throughout this section without
further mentioning.



\subsection*{The mating and birth operator}

The mating and birth operator, $F: \R_+^\Omega \times \R_+^\Omega \to R_+^\Omega$, is defined by
\[
F(f,g) (\xi) = \phi (\xi, f(\xi), g(\xi)), \qquad f,g \in \R_+^\Omega, \xi \in \Omega.
\]
Here $\R_+^\Omega$ is the set of  functions on $\Omega$ with values in $\R_+$
and $\phi : \Omega \times \R_+^2 \to \R_+$ is the local mating
and birth function. If there are $x_1$ females and $x_2$ males
at location $\xi$ in $\Omega$, $\phi(\xi, x)$ with $x = (x_1,x_2)$ is the amount of offspring produced at $\xi$. We equip $\R_+^2$ with the standard order
and make the following assumptions which hold throughout the rest of this
paper:

\begin{itemize}
\item $\phi (\cdot, x)$ is Borel measurable for each $x \in \R_+^2$.

\item $\phi(\xi, \cdot)$ is order preserving on $\R_+^2$ for each $\xi \in \Omega$.

\item $ \phi(\xi, \cdot)$ is  homogeneous for each $\xi \in \Omega$,
\[
\phi(\xi , \alpha x ) = \alpha \phi (\xi, x), \qquad \alpha \ge 0, \xi \in \Omega,
x \in \R_+^2.
\]

\item $\phi(\xi, \cdot)$ is continuous for each $\xi \in \Omega$.

\item The function $\psi: \Omega \to \R_+ $ defined by $\psi(\xi) =\phi(\xi, 1,1) $ is  bounded.

\end{itemize}

One example is given by the harmonic mean
\[
\phi(\xi, x) =
 \beta(\xi) \frac{x_1 x_2 }{x_1 + x_2},
\quad x= (x_1,x_2) \in \R_+ \setminus\{(0,0)\}.
\]
Here $\beta:\Omega \to \R_+$ is Borel measurable, and $\beta(\xi)$ is per pair  birth rate  at $\xi$.
Another example is
\[
\phi(\xi, x) = \min \big \{ \beta_1(\xi) x_1, \beta_2(\xi) x_2 \big\}
\]
with two Borel measurable functions $\beta_1,\beta_2: \Omega \to \R_+$.

Notice that
$F:\R_+^\Omega \times \R_+^\Omega \to \R_+^\Omega$ is  homogeneous and order-preserving. Here $\R_+^\Omega$ is equipped with with the
pointwise order $f \le g$ if $f(\xi) \le g(\xi)$ for all $\xi \in \Omega$.
 $F$ has only weak positivity and order-preserving properties:
If can happen that   $f,g $ are not identically zero but $F(f,g)$ is if the supports of $f$ and $g$
have empty intersections. $F$ is the Nemytskii or superposition operator
associated with $\phi$.

For $x = (x_1,x_2) \in \R_+^2$,
\begin{equation}
\label{eq:mate-birth-est}
\phi(\xi,x_1,x_2)\le \phi(\xi, x_1 +x_2 , x_1+x_2)
= \psi(\xi) (x_1 + x_2) , \qquad \psi(\xi):= \phi(\xi,1,1).
\end{equation}
So
\begin{equation}
\label{eq:mate-birth-op-est}
F(f,g) \le  \psi (f+g), \qquad f,g \in \R_+^\Omega, \qquad \psi(\xi) = \phi(\xi,1,1).
\end{equation}
It follows from the boundedness of $\psi$ that $F$ is a continuous
map from $L^p_+(\Omega) \to L^p_+(\Omega)$ for every $p \in [1,\infty]$.
See \cite[Thm.3.4.4]{GaPa}.

\subsection*{The next year offspring operator}

Our state space of choice is $X_+ = L_+^1(\Omega)$, the cone
of $X = L^1(\Omega)$.
The next year offspring operator is formally given by
\begin{equation}
\label{eq:next-year-op}
B (f) = F(K_1f, K_2f), \quad f \in L^1_+(\Omega).
\end{equation}
By (\ref{eq:mate-birth-est}),
\begin{equation}
\label{eq:next-year-est}
B (f) \le  \psi (K_1f + K_2 f) , \qquad f \in L^1_+(\Omega).
\end{equation}

In order to make the operator $B$ uniformly $u$-bounded for some $u \in X_+$ we make the following assumption (cf. \cite[(2.4)]{Kra}).

\begin{assumption}
\label{ass:beer-app}
Assume that there exists a function $u \in  L^1(\Omega)$
such that
\[
\psi (\xi) (k_1(\xi, \eta) + k_2(\xi, \eta)) \le u(\xi)
\]
for a.a. $(\xi, \eta) \in \Omega^2$   with respect to the $2n$-dimensional Lebesgue measure.
\end{assumption}

We will establish that $B$ maps
 $X_+$ into $X_u \cap X_+$ where $X_u$ is defined as in Definition    \ref{def:order-norm}.

\begin{lemma}
\label{re:beer-app1}
 $B$  maps  $X_+= L_+^1(\Omega)$ into $X_u \cap X_+$.

 Further $\{\|B f\|_u ; \; f \in X_+, \|f\|_1 \le 1\}$ is bounded
 and $B$ is uniformly $u$-bounded.
\end{lemma}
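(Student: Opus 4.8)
The plan is to combine the pointwise majorization (\ref{eq:next-year-est}) of $B$ by the harmonic-type bound with Assumption \ref{ass:beer-app}, which is precisely engineered to turn that majorant into a multiple of $u$. First I would recall that, by (\ref{eq:mate-birth-est}) applied at a.a.\ $\xi\in\Omega$ with $x_1=(K_1f)(\xi)$ and $x_2=(K_2f)(\xi)$ (both finite a.e.\ since $K_j$ maps $L^1$ to $L^1$),
\[
B(f)(\xi)=\phi\big(\xi,(K_1f)(\xi),(K_2f)(\xi)\big)\le \psi(\xi)\big((K_1f)(\xi)+(K_2f)(\xi)\big).
\]
Inserting the definition (\ref{eq:migration-op}) of the migration operators and pulling the $\xi$-constant $\psi(\xi)$ inside the $\eta$-integral (and adding the two nonnegative integrals), this reads
\[
B(f)(\xi)\le \int_\Omega \psi(\xi)\big(k_1(\xi,\eta)+k_2(\xi,\eta)\big)\,f(\eta)\,d\eta .
\]

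Next I would invoke Assumption \ref{ass:beer-app}. Since the kernel $\psi(\xi)\big(k_1(\xi,\eta)+k_2(\xi,\eta)\big)$ is nonnegative and bounded above by $u(\xi)$ for a.a.\ $(\xi,\eta)\in\Omega^2$, Tonelli's theorem gives that for a.a.\ fixed $\xi$ this bound holds for a.a.\ $\eta$; it also forces $u\ge 0$ a.e., so that $u\in X_+=L^1_+(\Omega)$. Hence, for every $f\in X_+$,
\[
0\le B(f)(\xi)\le u(\xi)\int_\Omega f(\eta)\,d\eta=\|f\|_1\,u(\xi)\qquad\text{for a.a. }\xi\in\Omega.
\]
Because $u\in L^1(\Omega)$, the sandwich shows $B(f)\in L^1(\Omega)$ and, a fortiori, $-\|f\|_1\,u\le B(f)\le\|f\|_1\,u$, so $B(f)\in X_u\cap X_+$ with $\|B(f)\|_u\le\|f\|_1$. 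Restricting to $\|f\|_1\le 1$ yields that $\{\|Bf\|_u;\ f\in X_+,\ \|f\|_1\le 1\}$ is bounded (by $1$), and the inequality $B(f)\le\|f\|_1\,u=\|f\|\,u$ for all $f\in X_+$ is exactly the assertion that $B$ is uniformly $u$-bounded, with constant $c=1$ in Definition \ref{def:u-bounded}.

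I do not expect a genuine obstacle here: the proof is essentially the chain of inequalities above. The only points needing a small amount of care are the measurability and $L^1$-integrability of $B(f)$ — which follow either from the earlier remark that $F$ maps $L^1_+(\Omega)$ continuously into itself, or directly from the sandwich $0\le B(f)\le\|f\|_1\,u$ with $u\in L^1$ — and the routine use of Tonelli's theorem to pass from the a.e.-statement on $\Omega^2$ in Assumption \ref{ass:beer-app} to the a.e.-in-$\xi$ pointwise bound, both harmless since every integrand in sight is nonnegative.
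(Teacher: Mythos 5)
Your proof is correct and takes essentially the same route as the paper: majorize $B(f)(\xi)$ via (\ref{eq:mate-birth-est}) and Assumption \ref{ass:beer-app} to obtain $0\le Bf \le \|f\|_1\, u$ a.e., hence $Bf\in X_u\cap X_+$ with $\|Bf\|_u\le \|f\|_1$, which gives both the boundedness of $\{\|Bf\|_u;\ f\in X_+,\ \|f\|_1\le 1\}$ and uniform $u$-boundedness with $c=1$. The only cosmetic difference is how the a.e.\ kernel bound on $\Omega^2$ is turned into an a.e.-in-$\xi$ statement: you section the exceptional null set via Tonelli, while the paper integrates against arbitrary $g\in L^1_+(\Omega)$ and applies Tonelli to the double integral; both are routine and yield the same estimate.
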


\begin{proof} For all $\xi \in \Omega$ and $f \in L^1_+(\Omega)$,
by (\ref{eq:mate-birth-est}) and (\ref{eq:migration-op}),
\[
\phi(\xi, (K_1f)(\xi), (K_2f)(\xi))
\le
\psi (\xi) \int_\Omega [k_1(\xi, \eta) + k_2(\xi,\eta)] f(\eta) d \eta
\]
with the right hand side being nonnegative and possibly infinite.
For all $g \in L^1_+(\Omega)$, by Tonelli's theorem and our assumptions,
\[
\begin{split}
&\int_\Omega g(\xi) \phi(\xi, (K_1f)(\xi), (K_2f)(\xi)) d\xi
\\
\le &
\int_{\Omega^2}  g(\xi) \psi (\xi)  [k_1(\xi, \eta) + k_2(\xi,\eta)]  f(\eta) d\xi d \eta
\\
\le &
 \int_{\Omega^2} g(\xi) u(\xi) f(\eta) d\xi d \eta
=
\int_\Omega  g(\xi) u(\xi ) \|f\|_1 d\xi.
\end{split}
\]
This shows that $(B f)(\xi) $ is defined for a.a. $\xi \in \Omega$
and that $Bf \le \|f\|_1 u$ a.e. on $\Omega$.
 So $B$ maps $X=L^1(\Omega)$
into $X_u$ and $\{\|Bf \|_u; f \in X_+, \|f\|_1 \le 1 \}$ is bounded by 1.
\end{proof}

Since $B$ is uniformly $u$-bounded and monotonically compact
($X_+$ is  regular), we have the following result
from Theorem \ref{re:approx-power-order-bounded}, Theorem \ref{re:minihed-subeigen} and Theorem \ref{re:eigenfun-exist-summary}.

\begin{theorem}
\label{re:mate-subeigen} Let the Assumptions \ref{ass:beer-app} hold. Then
\[
\|B^n u \|^{1/n} \to \br_+ (B), \quad  n \to \infty.
\]
Further there exists some $f \in L_+^1(\Omega)$, $v \ne 0$, such that
$B (f) \ge \br_+(B) f$. There also exists some homogeneous
order-preserving bounded
functional $\phi: X_+ \to \R_+$ such that $\phi (u) > 0$ and
$ \phi \circ B = \br_+(B) \phi$.

Finally $\br_+(B) = \br_o(B)$.
\end{theorem}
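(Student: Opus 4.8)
\section*{Proof proposal for Theorem \ref{re:mate-subeigen}}

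The plan is to verify that $B$ and the cone $X_+=L^1_+(\Omega)$ meet the hypotheses of the general results of Sections 5 and 6 and then to read off the four conclusions. First I would record the structure of the underlying cone: $L^1(\Omega)$ is a Banach space, $L^1_+(\Omega)$ is closed, hence a complete cone, and it is regular (and fully regular), so by Theorem \ref{re:regular}(a) it is normal; moreover it is minihedral, since for $f,g\in L^1_+(\Omega)$ the pointwise minimum $\min\{f,g\}$ again lies in $L^1_+(\Omega)$ and is the infimum of $f$ and $g$ for the order of $L^1(\Omega)$. The operator $B(f)=F(K_1f,K_2f)$ is homogeneous and order preserving because $F$ and the $K_j$ are, and it is continuous from $L^1_+(\Omega)$ into itself since $F$ is continuous on $L^1_+(\Omega)$ (boundedness of $\psi$) and the $K_j$ are bounded linear. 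Because $X_+$ is regular, continuity of $B$ already makes $B$ monotonically compact (the remark after Definition \ref{def:mono-compact}). Finally, Lemma \ref{re:beer-app1} shows that $B$ maps $X_+$ into $X_u\cap X_+$ and is uniformly $u$-bounded, with $u$ the function from Assumption \ref{ass:beer-app}; in particular $B$ is uniformly order bounded and bounded.

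Granted these facts, the first displayed limit together with the final equality $\br_+(B)=\br_o(B)$ is exactly Theorem \ref{re:approx-power-order-bounded}(a) applied with $m=1$: uniform $u$-boundedness of $B$ gives $\br_+(B)=\lim_{n\to\infty}\|B^nu\|^{1/n}=\gamma_B(u)=\br_o(B)$. If $\br_+(B)=0$ this is also immediate from $\|B^nu\|\le\|B^n\|_+\|u\|$, so for the two remaining assertions I may and do assume $\br_+(B)>0$.

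For the subeigenvector I would invoke Theorem \ref{re:minihed-subeigen}: $X_+$ is a normal minihedral cone, $B$ is continuous, order preserving, homogeneous, monotonically compact and uniformly $u$-bounded with $u\ne0$, and $\br_+(B)>0$; hence there is $f\in L^1_+(\Omega)$, $f\ne0$, with $Bf\ge\br_+(B)f$. For the eigenfunctional I would feed this $f$ into Proposition \ref{re:eigenfun-prep}. Take $x^*=1_\Omega\in L^\infty(\Omega)=X^*$; then $x^*\in X_+^*$ and $x^*(f)=\|f\|_1>0$, and since $Bf\ge\br_+(B)f$ yields $B^kf\ge\br_+(B)^kf$ for all $k$ (homogeneity and order preservation), positivity of $x^*$ gives $x^*(R_\lambda f)\ge x^*(f)\sum_{k=0}^\infty\lambda^{-(k+1)}\br_+(B)^k=x^*(f)/(\lambda-\br_+(B))\to\infty$ as $\lambda\to\br_+(B)+$. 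Since $B$ is homogeneous, order preserving and uniformly order bounded and $\br_+(B)>0$, Proposition \ref{re:eigenfun-prep} then produces a homogeneous, order preserving, bounded, nonzero functional $\phi:X_+\to\R$ with $\phi\circ B=\br_+(B)\phi$ and $\phi(w)>0$ for every order bound $w$ of $B$; in particular $\phi(u)>0$, and since $\phi$ is order preserving with $\phi(0)=0$ it maps $X_+$ into $\R_+$. (Alternatively one may quote Theorem \ref{re:dual} for the existence of $\phi$, or Theorem \ref{re:eigenfun-exist-summary}(b), and extract $\phi(u)>0$ from Proposition \ref{re:eigenfun-prep} as above.)

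I do not expect a genuine obstacle here, since the theorem is a direct specialization of the machinery already established. The only points that need a sentence of justification are that $L^1_+(\Omega)$ is regular and minihedral (standard), that the extra conclusion $\phi(u)>0$ has to be taken from Proposition \ref{re:eigenfun-prep} rather than from the bare statement of Theorem \ref{re:eigenfun-exist-summary}, and the harmless degenerate case $\br_+(B)=0$, in which the subeigenvector claim is vacuous ($Bf\ge0$ for any nonzero $f$) and the eigenfunctional claim is to be read as conditional on $\br_+(B)>0$.
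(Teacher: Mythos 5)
Your proposal is correct and follows essentially the same route as the paper, which proves the theorem by exactly this combination: normality/regularity/minihedrality of $L^1_+(\Omega)$, uniform $u$-boundedness and monotone compactness of $B$, then Theorem \ref{re:approx-power-order-bounded}, Theorem \ref{re:minihed-subeigen}, and the eigenfunctional machinery of Section \ref{sec:dual}. Your extra care in routing the eigenfunctional through Proposition \ref{re:eigenfun-prep} (via Theorem \ref{re:dual}) to justify $\phi(u)>0$, and your remark on the degenerate case $\br_+(B)=0$, are sensible refinements of the paper's one-line citation but not a different method.
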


\subsection{An eigenvector with beer barrel scent }


 Our aim is applying  Theorem \ref{re:beer-exist} with $Y = X_u$ endowed with the $u$-norm.

\begin{lemma}
\label{re:beer-app2}
 $B$ is a compact map from $X_+ \cap X_u$
with the $u$-norm to $X_+= L^1_+(\Omega)$.
\end{lemma}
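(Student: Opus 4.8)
The plan is to reduce the statement to a pointwise a.e.\ convergence argument that becomes available once one renormalizes by the order bound $u$. I would start from an arbitrary sequence $(f_k)$ in $X_+\cap X_u$ that is bounded in the $u$-norm; we may assume $\|f_k\|_u\le 1$, so that $0\le f_k\le u$ a.e.\ on $\Omega$ by (\ref{eq:order-max-for}). The goal is a subsequence along which $(Bf_k)$ converges in $L^1(\Omega)$. The first observation is a uniform domination: the $K_j$ are positive linear operators, hence $0\le K_jf_k\le K_ju$, and since $F$ is order preserving, $0\le Bf_k=F(K_1f_k,K_2f_k)\le F(K_1u,K_2u)=Bu$, where $Bu\in L^1_+(\Omega)$ by Lemma \ref{re:beer-app1} (and $K_ju\in L^1(\Omega)$ since each $K_j$ is bounded on $L^1$ and $u\in L^1$). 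Thus $(Bf_k)$ is dominated by a fixed $L^1$-function, so by the dominated convergence theorem it suffices to extract a subsequence along which $Bf_k$ converges a.e.

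The crux is the a.e.\ convergence of $K_jf_k$, and here I would change the reference measure to $d\mu=u\,d\lambda$ ($\lambda$ the Lebesgue measure), which is a finite measure since $u\in L^1(\Omega)$. The functions $f_k/u$ lie in the unit ball of $L^\infty(\Omega,\mu)=L^1(\Omega,\mu)^*$, and since $L^1(\Omega,\mu)$ is separable I can pass to a subsequence with $f_k/u\rightharpoonup^{*}w$ weak-$*$ in $L^\infty(\Omega,\mu)$, with $0\le w\le 1$; put $f=wu\in X_+\cap X_u$. For a.a.\ $\xi\in\Omega$ one has $(K_ju)(\xi)=\int_\Omega k_j(\xi,\eta)\,d\mu(\eta)<\infty$ because $K_ju\in L^1(\Omega)$, so $k_j(\xi,\cdot)\in L^1(\Omega,\mu)$ for a.a.\ $\xi$, and testing the weak-$*$ convergence against it gives
\[
(K_jf_k)(\xi)=\int_\Omega k_j(\xi,\eta)\,\frac{f_k(\eta)}{u(\eta)}\,d\mu(\eta)\;\longrightarrow\;\int_\Omega k_j(\xi,\eta)\,w(\eta)\,d\mu(\eta)=(K_jf)(\xi)
\]
for a.a.\ $\xi\in\Omega$ and $j=1,2$. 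Invoking the pointwise continuity of $\phi(\xi,\cdot)$ then yields $(Bf_k)(\xi)=\phi(\xi,(K_1f_k)(\xi),(K_2f_k)(\xi))\to\phi(\xi,(K_1f)(\xi),(K_2f)(\xi))=(Bf)(\xi)$ for a.a.\ $\xi$, and dominated convergence (with dominator $Bu$) upgrades this to $Bf_k\to Bf$ in $L^1(\Omega)$, which proves compactness. Alternatively, once $K_jf_k\to K_jf$ in $L^1$ is established from a.e.\ convergence plus domination, one may simply quote the $L^1$-continuity of the Nemytskii operator $F$.

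I expect the main obstacle to be precisely the passage from precompactness of $\{f_k\}$ in the order interval $[0,u]\subseteq L^1$ to \emph{pointwise a.e.}\ convergence of $K_jf_k$: ordinary $\sigma(L^1,L^\infty)$-convergence of $f_k$ does not suffice, since the kernel slices $k_j(\xi,\cdot)$ need not lie in $L^\infty$. The device above—replacing $\lambda$ by $\mu=u\,d\lambda$, against which $k_j(\xi,\cdot)$ \emph{is} integrable for a.a.\ $\xi$ with $L^1(\mu)$-norm $(K_ju)(\xi)$—is exactly where Assumption \ref{ass:beer-app} and Lemma \ref{re:beer-app1} enter in an essential way. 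The only other point needing care is the legitimacy of the weak-$*$ subsequence extraction, which rests on the separability of $L^1(\Omega,\mu)$ (a finite Borel measure on a subset of Euclidean space).
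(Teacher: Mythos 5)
Your proof is correct and follows essentially the same route as the paper: pass to the finite measure $u\,d\lambda$, use that $k_j(\xi,\cdot)\in L^1(u\,d\lambda)$ for a.a.\ $\xi$, extract a subsequence along which the integrals against all $g\in L^1(u\,d\lambda)$ converge, and then combine pointwise continuity of $\phi(\xi,\cdot)$ with domination by $cB(u)$ and dominated convergence. The only difference is cosmetic: you invoke weak-$*$ sequential compactness of the unit ball of $L^\infty(\mu)$ via separability of $L^1(\mu)$, whereas the paper proves exactly that by hand with a countable dense set, a diagonal extraction, and a Cauchy estimate.
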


\begin{proof} Since $u \in L^1(\Omega)$, by Tonelli's theorem,
\[
\infty > \int_\Omega \Big (\int_\Omega k(\xi, \eta ) d \xi \Big ) u(\eta) d \eta
=
\int_\Omega d \xi \int_\Omega k(\xi, \eta ) u(\eta) d \eta .
\]
Thus, for a.a. $\xi \in \Omega$, $k_j (\xi, \cdot)
\in L^1_+( u d \eta):= L^1(\Omega, \mu) $
where $\mu $ is the finite regular measure
$\mu (S ) = \int_S u(\eta) d\eta $ on the Borel subsets $S$ of $\Omega$.
We extend $\mu$ to the Borel subsets $S$ of $\R^n$ by $\tilde \mu (S)
= \int_S u (\eta) d \eta$. $\tilde \mu$ is then a finite Baire measure
on the locally compact space $\R^n$.
Since $\R^n$ is $\sigma$-compact (countable at infinity) and its topology has a countable base,
  $C_c(\R^N)$, the space of
continuous functions with compact support,   is countable under the
 supremum norm \cite[Thm.7.6.3]{Bau} and dense in $L^1(\R^n, \tilde \mu)$
 \cite[Cor.7.5.6]{Bau}. So $L^1(\R^n, \tilde \mu)$ has a dense
 countable subset. Restriction to $\Omega$ provides a dense
 countable subset $\{g_i; i \in \N\}$  of $L^1(u d \eta)$.

Let $(f_\ell)$ be a bounded sequence in $X_u \cap X_+$ with the $u$-norm.
Then there exists some $c > 0$ such that $f_\ell \le c u$ for all $\ell  \in \N$.
For each $i \in \N$, $(\int_\Omega g_i(\eta) f_\ell(\eta) d\eta)_{\ell \in \N}$
is a bounded real sequence which has a convergent subsequence. Using a
standard diagonalization procedure, after choosing a subsequence of $(f_\ell)$,
we can assume that  $(\int_\Omega g_i(\eta) f_\ell(\eta) d\eta)_{\ell \in \N}$
are convergent sequences for all $i \in \N$. Let $g \in L^1(u d \eta)$
and $\epsilon > 0$. Then there exists some $i \in \N$ such that
$\int_\Omega |g(\eta)- g_i(\eta)| u (\eta) d \eta < \epsilon$.
For $\ell, m \in \N$,
\[
\begin{split}
& \Big | \int_\Omega g(\eta) f_\ell(\eta) d \eta -
\int_\Omega g(\eta) f_m(\eta) d \eta \Big |
\\
\le
&\Big | \int_\Omega g(\eta) f_\ell(\eta) d \eta - \int_\Omega g_i(\eta) f_\ell (\eta) d \eta\Big|
\\
&+
\Big |
\int_\Omega g_i(\eta) f_\ell (\eta) d \eta
-
\int_\Omega g_i(\eta) f_m (\eta) d \eta
\Big|
 \\+ &
\Big | \int_\Omega g_i(\eta) f_m (\eta) d \eta
-
\int_\Omega g(\eta) f_m(\eta) d \eta \Big |
\\
\le &
2c \int_\Omega | g(\eta) - g_i(\eta)| u(\eta) d\eta
 +
\Big |
\int_\Omega g_i(\eta) f_\ell (\eta) d \eta
-
\int_\Omega g_i(\eta) f_m (\eta) d \eta
\Big|.
\end{split}
\]
Since $(\int_\Omega g_i (\eta) f_\ell (\eta) d \eta)_\ell$
is a Cauchy sequence,
\[
\limsup_{\ell, m \to \infty} \Big | \int_\Omega g(\eta) f_\ell(\eta) d \eta -
\int_\Omega g(\eta) f_m(\eta) d \eta \Big |
\le
2c \int_\Omega | g(\eta) - g_i(\eta)| u(\eta) d\eta
\le 2c \epsilon.
\]
Since this holds for each $\epsilon >0$,
$\Big (\int_\Omega g(\eta) f_\ell(\eta) d \eta \Big)_\ell$
is a Cauchy sequence and converges for each
$g \in L^1(u d \eta)$. By the consideration of the beginning of this proof,
\[
(K_j (f_\ell) (\xi))_\ell \stackrel{\ell \to \infty}{\longrightarrow} h_j (\xi),
 \hbox{ for a.a. } \; \xi \in \Omega
 \]
with  nonnegative Borel measurable function $h_j : \Omega \to \R_+$, $j=1,2$.
Since $\phi(\xi, \cdot)$ is continuous for all $\xi \in \Omega$ ,
\[
B(f_\ell)(\xi)\stackrel{\ell \to \infty}{\longrightarrow}  F(h_1,h_2)(\xi)
\hbox{ for a.a. } \xi \in \Omega.
\]
Since $B$ is homogeneous and order preserving,
\[
B(f_\ell) \le c B (u) \in L^1_+(\Omega), \qquad \ell \in \N.
\]
Taking the limit as $\ell \to \infty$, $F(h_1,h_2) \le c B(u)$
and
\[
| B(f_\ell) - F(h_1,h_2)| \le 2c B(u).
\]
By Lebesgue's theorem of a.e. dominated convergence,
$\| B(f_\ell) - F(h_1,h_2) \|_1 \to 0$ as $\ell \to \infty$.

This shows that $B$ is compact as a map from $X_u \cap X_+$ with $u$-norm to $X_+$.
\end{proof}

\begin{theorem}
\label{re:beer-app-eigen}
 Let Assumption \ref{ass:beer-app}  be satisfied
and $\br_+(B) > 0$. Then there exists an eigenfunction $f \in
L^1_+ (\Omega)$, $f\ne 0$,
such that $B f = \br_+(B) f$.
\end{theorem}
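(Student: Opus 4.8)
The plan is to deduce the statement directly from Theorem~\ref{re:beer-exist}, applied with $X = L^1(\Omega)$, $X_+ = L^1_+(\Omega)$, and $Y = X_u$ carrying the order norm $\|\cdot\|_u$, where $u$ is the function furnished by Assumption~\ref{ass:beer-app}. All the analytic work has already been done in Lemmas~\ref{re:beer-app1} and~\ref{re:beer-app2}; what remains is to check that every hypothesis of Theorem~\ref{re:beer-exist} is in force and then to read off the conclusion.

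First I would record the structural hypotheses. The map $B(f) = F(K_1 f, K_2 f)$ is homogeneous and order preserving because $\phi(\xi,\cdot)$ has these properties for each $\xi$ and $K_1, K_2$ are positive linear operators; it is continuous from $L^1_+(\Omega)$ into itself since $F$ is continuous on $L^1_+(\Omega)$ (by boundedness of $\psi$, cf.\ \cite[Thm.3.4.4]{GaPa}) and $K_1, K_2$ are bounded linear maps. Moreover $X = L^1(\Omega)$ is a Banach space whose standard cone is normal (the norm being additive on $X_+$), so the alternative ``$X_+$ normal or $X$ a Banach space'' in Theorem~\ref{re:beer-exist} holds. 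By Definition~\ref{def:order-norm} the subspace $Y = X_u$ with $\|\cdot\|_u$ is a normed vector space, and $Y_+ = X_+ \cap Y$.

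Next I would verify the remaining hypotheses. By Lemma~\ref{re:beer-app1}, $B(X_+) \subseteq X_u \cap X_+ = Y_+$, the family $\{\|Bf\|_u;\ f \in X_+,\ \|f\|_1 \le 1\}$ is bounded (by $1$), and $B$ is uniformly $u$-bounded. By Lemma~\ref{re:beer-app2}, $B$ is compact as a map from $(Y_+,\|\cdot\|_u)$ into $(X_+,\|\cdot\|_1)$. Also $u \in Y_+$: indeed $u \ge 0$ a.e.\ and $0 \le u \le 1\cdot u$ gives $\|u\|_u \le 1 < \infty$. Finally $u \ne 0$, for otherwise Assumption~\ref{ass:beer-app} would force $\psi(\xi)\bigl(k_1(\xi,\eta)+k_2(\xi,\eta)\bigr) = 0$ for a.a.\ $(\xi,\eta)$, whence $Bf \le \psi(K_1 f + K_2 f) = 0$ by (\ref{eq:next-year-est}) for all $f \in L^1_+(\Omega)$, contradicting $\br_+(B) > 0$.

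With all hypotheses confirmed, Theorem~\ref{re:beer-exist} produces some $v \in X_+ \cap Y$ with $\|v\|_1 = 1$ and $Bv = \br_+(B) v$; taking $f = v$ finishes the proof. I do not anticipate any real obstacle at this stage: the delicate point, namely the compactness of $B$ from the $u$-norm topology into the $L^1$ topology, is exactly the content of Lemma~\ref{re:beer-app2}, and the present argument is merely the bookkeeping of feeding the two preparatory lemmas into the abstract theorem.
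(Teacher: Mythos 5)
Your proposal is correct and follows essentially the same route as the paper: the paper's proof of Theorem \ref{re:beer-app-eigen} likewise consists of invoking Theorem \ref{re:beer-exist} with $Y = X_u$ (with the $u$-norm), the hypotheses having been verified in Lemma \ref{re:beer-app1} and Lemma \ref{re:beer-app2}. Your additional checks (continuity of $B$, normality of $L^1_+$, $u \in Y_+$, $u \ne 0$) are details the paper leaves implicit, and you handle them correctly.
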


\begin{proof}
The assertions follow from Theorem  \ref{re:beer-exist} with $Y = X_u$ whose assumptions have been checked in Lemma   \ref{re:beer-app1}
and Lemma \ref{re:beer-app2}. In particular, there exists some $f \in X_+$, $\|f\|_1=1$,
such that $Bf = \br_+(B) f$.
\end{proof}


\subsection*{Acknowledgment}

I thank Karl-Peter Hadeler for his constructive criticism of an earlier draft and historical
remarks.

 \bibliography{}

\begin{thebibliography}{10}


\bibitem{AGN} {\sc Akian, M., Gaubert, S., Nussbaum, R.D.},
A Collatz-Wielandt characterization of the spectral radius
of order-preserving homogeneous maps on cones,
arXiv:1112.5968v1

\bibitem{BaDa} {\sc Baca\"er, N., E.H. Ait Dads},
Genealogy with seaonality, the basic reproduction number,
and the influenza pandemic,
{\em J. Math. Biol.} {\bf 62} (2011),  741-762

\bibitem{Bau} {\sc Bauer, H.},
{\em Probability Theory and Elements of Measure Theory}, sec. ed.,
Academic Press, London 1981



\bibitem{Bir} {\sc Birkhoff, G.},
Extensions of Jentzsch's Theorem,
{\em Trans. Amer. Math. Soc.} {\bf 85} (1957), 219-226


\bibitem{Boh66} Bohl, E., Eigenwertaufgaben bei monotonen Operatoren
und Fehlerabsch\"atzungen f\"ur Operatorgleichungen,
{\em Arch. Rat. Mech. Anal. 22} (1966), 313-332

\bibitem{Boh} {\sc Bohl, E.},
{\em Monotonie: L\"osbarkeit und Numerik bei Operatorgleichungen},
Springer, Berlin Heidelberg 1974

\bibitem{Bon55} {\sc Bonsall, F.F.},
Endomorphisms of a partially ordered vector space without
order unit,
{\em J. London Math. Soc.} {\bf 30} (1955),
133-144


\bibitem{Bon58} {\sc Bonsall, F.F.},
Linear operators in complete positive cones,
{\em Proc. London Math. Soc.} {\bf 8} (1958),
53-75

\bibitem{Bon62} {\sc Bonsall, F.F.},
Lectures on Some Fixed Point Theorems of Functional Analysis,
Tata Institute of Fundamental Research, Bombay 1962

\bibitem{Col1} {\sc Collatz, L.},
Einschlie\ss ungssatz f\"ur die Eigenwerte von Integralglei\-chungen,
{\em Math. Z.} {\bf 47} (1942), 395-398

\bibitem{CuZh} {\sc Cushing, J.M., Y. Zhou},
The net reproductive  value and stability in
matrix population models,
{\em Nat. Res. Mod.} {\bf 8} (1994), 297-333


\bibitem{Dei} {\sc Deimling, K.D.}
{\em Nonlinear Functional Analysis},
Springer, Berlin Heidelberg 1985

\bibitem{DiHeMe}
{\sc Diekmann, O.; J. A. P. Heesterbeek, J. A. J. Metz},  On the definition and the computation of the basic reproduction ratio $R_0$ in models for infectious diseases in heterogeneous populations, {\em  J. Math. Biol.} {\bf  28}  (1990),   365-382


\bibitem{EvNu} {\sc Eveson, S.P., R.D. Nussbaum},
Applications of the Birkhoff-Hopf theorem to the spectral theory of
positive linear operators,
{\em Math. Proc. Camb. Phil. Soc.} {\bf 117} (1995), 491-512


\bibitem{FoNa} {\sc F\"orster, K.-H., B. Nagy},
On the Collatz-Wielandt numbers and the local spectral radius of
a nonnegative operator,
{\em Linear Algebra and its Applications} {\bf 120} (1980),
193-205


\bibitem{GaPa} {\sc Gasinski, L., N.S. Papageorgiou},
{\em Nonlinear Analysis}, Chapman\&Hall/CRC, Boca Raton 2004

\bibitem{Had89} {\sc Hadeler, K.P.},
Pair formation in age-structured populations,
{\em Acta Appl. Math.} {\bf 14} (1989), 91-102

\bibitem{Had93} {\sc Hadeler, K.P.}, Pair formation models with maturation period.  {\em  J. Math. Biol.}  {\bf 32}  (1993),   1-15.

\bibitem{Had08} {\sc Hadeler, K.P.},  Homogeneous systems with a quiescent phase.  {\em    Math. Model. Nat. Phenom.} {\bf 3}  (2008),   115-125.



\bibitem{IMM} {\sc Iannelli, M., M. Martcheva, F.A. Milner},
{\em Gender-Structured Population Models: Mathematical Methods,
Numerics, and Simulations},
SIAM, Philadelphia 2005


\bibitem{Kra} {\sc Krasnosel'skij, M.A.},
{\em Positive Solutions of Operator Equations,}
Noordhoff, Groningen 1964

\bibitem{KrLiSo} {\sc Krasnosel'skij, M.A., Lifshits, Je.A., Sobolev, A.V.}
{\em Positive Linear Systems: The Method of Positive Operators},
Heldermann Verlag, Berlin 1989

\bibitem{KrRu} {\sc Krein, M.G., M.A. Rutman},
Linear operators
leaving invariant a cone in a Banach space (Russian),
{\em Uspehi Mat. Nauk (N.S.)} {\bf 3} (1948), 3-95
English Translation, AMS Translation 1950 (1950), No. 26

\bibitem{LeNu} {\sc Lemmens, B., R.D. Nussbaum},
Continuity of the cone spectral radius, preprint
(arXiv:1107.4532v2)

\bibitem{MPNu02} {\sc Mallet-Paret, J., R.D. Nussbaum},
Eigenvalues for a class of homogeneous cone maps arising from max-plus operators,
{\em Discr. Cont. Dyn. Sys. (DCDS-A)} {\bf 8} (2002), 519-562

\bibitem{MPNu} {\sc Mallet-Paret, J., R.D. Nussbaum},
Generalizing the Krein-Rutman theorem, measures of noncompactness
and the fixed point index,
{\em J. Fixed Point Theory and Appl.} {\bf 7} (2010),
103-143

\bibitem{MPNu-beer} {\sc Mallet-Paret, J., R.D. Nussbaum},
Asymptotic fixed point theory and the beer barrel theorem,
{\em J. Fixed Point Theory Appl.} {\bf 4} (2008), 203-245



\bibitem{Nus81} {\sc Nussbaum, R.D.},
Eigenvectors of nonlinear positive operators and the linear
Krein-Rutman theorem, {\em Fixed Point Theory} (E. Fadell and G. Fournier, eds.),
309-331, Springer, Berlin New York 1981


\bibitem{Nus} {\sc Nussbaum, R.D.},
Eigenvectors of order-preserving linear operators,
{\em J. London Math. Soc.} {\bf 2} (1998), 480-496


\bibitem{Nus85} {\sc Nussbaum, R.D.},
{\em The Fixed Point Index and Some Applications,}
Les Presses de l'Universit\'e de Montr\'eal, Montr\'eal 1985

\bibitem{Nus87} {\sc Nussbaum, R.D.},
Iterated nonlinear maps and Hilbert's projective metric: a summary,
{\em Dynamics of Infinite Dimensional Systems} (S.-N. Chow, J.K. Hale, eds.),
231-248, Springer, Berlin Heidelberg 1987


\bibitem{Nus88} {\sc Nussbaum, R.D.},
Hilbert's projective metric and iterated nonlinear maps,
{\em Mem. AMS} {\bf 75}, Number 391, Amer. Math. Soc., Providence 1988


\bibitem{Nus89} {\sc Nussbaum, R.D.},
Iterated nonlinear maps and Hilbert's projective metric, II,
{\em Mem. AMS} {\bf 79}, Number 401, Amer. Math. Soc., Providence 1989


\bibitem{Sch55} {\sc Schaefer, H.H.},
 Positive Transformationen in lokalkonvexen halbgeordneten Vektorr\"aumen,
 {\em  Math. Ann.} {\bf 129},  1955, 323-329



\bibitem{Sch59} {\sc Schaefer, H.H.},
Halbgeordnete lokalkonvexe Vektorr\"aume. II.
{\em Math. Ann.} {\bf 138} (1959), 259-286


\bibitem{Sch} {\sc Schaefer, H.H.},
{\em Topological Vector Spaces}, Macmillan, New York 1966

\bibitem{Thi09} {\sc Thieme, H.R.},
Spectral bound and reproduction number for infinite
dimensional population structure and time-heterogeneity,
{\em SIAM J. Appl. Math.} {\bf 70} (2009), 188-211



\bibitem{Tho} {\sc Thompson, A.C.},
On certain contraction mappings in a partially ordered
vector space,
{\em Proc. AMS} {\bf 14} (1963), 438-443



\bibitem{Tro} {\sc Tromba, A.J.}, The beer barrel theorem, a new
proof of the asymptotic conjecture in fixed point theory,
{\em Functional Differential Equations and Approximations of
Fixed Points} (H.-O. Peitgen, H.-O. Walther, eds.), 484-488,
Lecture Notes in Mathematics 730, Springer, Berlin Heidelberg 1979

\bibitem{DrWa} {\sc van den Driessche, P.,  J. Watmough},
Reproduction
numbers and sub-threshold endemic equilibria for compartmental models
of disease transmission,
 {\em Math. Biosci.} {\bf 180} (2002), 29-48


\bibitem{vMi} {\sc von Mises, R.,  H. Pollaczek-Geiringer},
 Praktische
Verfahren der Gleichungsaufl\"osung,
{\em  Zeitschrift f\"ur
Angewandte Mathematik und Mechanik} {\bf 9} (1929), 58-77,
152-164,

\bibitem{Wie} {\sc Wielandt, H.},
Unzerlegbare, nicht negative Matrizen,
{\em Math. Z.} {\bf 52} (1950), 642-648


\end{thebibliography}

\end{document}